\definecolor{mycol}{rgb}{0,0,1}
\definecolor{mcc}{rgb}{0,0.4,0.6}
\theoremstyle{change}%
\newtheorem{definition}{Definition}[section]%
\newtheorem{theorem}[definition]{Theorem}%
\newtheorem{proposition}[definition]{Proposition}%
\newtheorem{lemma}[definition]{Lemma}%
\newtheorem{corollary}[definition]{Corollary}%
\newtheorem{ass}[definition]{Assumption}%
{\theorembodyfont{\rmfamily} \newtheorem{remark}[definition]{Remark}}%
{\theorembodyfont{\rmfamily} }%
\newenvironment{proof}
  {{\bf Proof\hspace{0.1cm}}}
  {\qquad \hspace*{\fill} $\Box$}%
\newcommand{\ep}{\varepsilon}
\newcommand{\tm}{\times}
\newcommand{\tr}{\mathrm{tr}}
\newcommand{\Gl}{\mathrm{Gl}}
\newcommand{\fa}{\mathfrak{a}}
\newcommand{\bary}{\mathrm{bar}}
\DeclareMathOperator*{\argmin}{arg\,min}
\newcommand{\PC}{\mathcal{P}}
\newcommand{\OC}{\mathcal{O}}
\newcommand{\di}{\text{\rm d}}
\newcommand{\trn}{^{\scriptscriptstyle \top}} 
\newfont{\bfb}{msbm10 scaled 1200}
\newfont{\mfb}{msbm8}
\newcommand{\der}[1]{#1^{\nabla}}
\newcommand{\R}{\mathbb{R}}%
\newcommand{\inner}{\mathrm{int}}%
\newcommand{\res}{\mathrm{res}}%
\newcommand{\rmD}{\text{\bf D}}%
\newcommand{\SC}{\mathcal{S}}%
\newcommand{\N}{\mathbb{N}}
\newcommand{\dif}{\text{\bf D}}
\newcommand{\br}{\mathbb{R}}
\newcommand{\ve}{\varepsilon}
\begin{document}

\title{Remote state estimation problem: towards the data-rate limit along the avenue of the second Lyapunov method}

\author{C.~Kawan\footnote{Institute of Informatics, LMU Munich, Germany (e-mail: christoph.kawan@lmu.de).}
, A.~Matveev\footnote{Department of Mathematics and Mechanics, Saint Petersburg University, St.~Petersburg, Russia (e-mail: almat1712@yahoo.com) and Department of Control Systems and Industrial Robotics, Saint-Petersburg National Research University of Information Technologies Mechanics and Optics (ITMO), Russia.}, A.~Pogromsky\footnote{Department of Mechanical Engineering, Eindhoven University of Technology, Eindhoven, The Netherlands, (e-mail: A.Pogromsky@tue.nl) and Department of Control Systems and Industrial Robotics, Saint-Petersburg National Research University of Information Technologies Mechanics and Optics (ITMO), Russia.}}

\date{}%
\maketitle%

\begin{abstract} 
In the context of control and estimation under information constraints, restoration entropy measures the minimal required data rate above which the state of a system can be estimated so that the estimation quality does not degrade over time and, conversely, can be improved. The remote observer here is assumed to receive its data through a communication channel of  finite bit-rate capacity. In this paper, we provide a new characterization of the restoration entropy which does not require to compute any temporal limit, i.e., an asymptotic quantity. Our new formula is based on the idea of finding a specific Riemannian metric on the state space which makes the metric-dependent upper estimate of the restoration entropy as tight as one wishes.%
\end{abstract}

\section{Introduction}

Recent decades have witnessed a substantially growing attention to networked control systems \cite{HNX07,Maurice_Nathan} and related problems of control and/or state estimation via communication channels with constrained bit-rates; for extended surveys of this area, we refer the reader to \cite{MatveevSavkin_book08,YukBas13,AiT_survey} and references therein. One of the fundamental concerns in this context is to find a minimal data rate between the communication peers above which remote state estimation (\cite{Wong_Brocket}, see also \cite{alfaut} for a related problem) is feasible. In other words, the receiver can reconstruct the current state of the remote system in the real-time regime. The receiver is updated by means of a bit flow with a limited bit-rate. A minimal threshold of this rate, which still ensures that the remote observer is able to keep track of the system, is the quantity one likes to evaluate in a constructive manner. Loosely speaking, the communication rate between the system and the observer has to exceed the rate at which the system ``generates information'', while the latter concept is classically formalized in a form of entropy-like characteristic of the dynamical system at hands. The related mathematical results are usually referred to as Data Rate Theorems (see, e.g. \cite{NFZE07,MatveevSavkin_book08,PartII} and references therein) - their various versions coexist to handle various kinds of observability and models of both the plant and the constrained communication channel.%

Those results deliver a consistent message that the concept of the topological entropy {(TE)} of the system and its recent offshoots provide the figure-of-merit needed to evaluate the channel capacity for control applications; the mentioned modifications of TE are partly aimed to properly respond to miscellaneous phenomena crucial for control problems, like uncertainties in the observed system \cite{SAV06,kawan2017metric,kawan2018optimal}, implications of control actions \cite{CoKaNa13,Nair13,CoKa09,rungger2017invariance}, the decay rate of the estimation error \cite{Liberzon_Mitra_TAC}, or Lipschitz-like relations between the exactness of estimation and the initial state uncertainty \cite{PartII}. Keeping in mind the relevance of communication constraints in modern control engineering, constructive methods to compute or finely estimate those entropy-like characteristics take on crucial not only theoretical but also practical importance. Several steps have been done in this direction in \cite{PoMaNonlin,PartII,hafstein2019numerical}, where corresponding upper estimates were found by following up the ideas of the second Lyapunov method. Moreover, it was shown that for some particular prototypical chaotic systems of low dimensions, these upper estimates are exact in the sense that they coincide with the true value of the estimated quantity.

Whether these inspiring samples of precise calculations are mere incidents, or, conversely, are particular manifestations of a comprehensive capacity inherent in the employed approach? Confidence in the last option would constitute a rationale for undertaking special efforts aimed to fully unleash the potential of this approach via its further elaboration.
\par
The primary goal of the current paper is to answer the posed question; we show that among the above options, the last one is the true one.
This is accomplished via a sort of a converse result, which is similar in spirit to the celebrated converse Lyapunov theorems.
Among various descendants of TE, we pick the so-called {\it restoration entropy} (RE) \cite{PartII} to deal with. In the previous work \cite{PartII}, it has been shown that an upper estimate of RE can be derived in terms of singular values of the derivative of the system flow, calculated with respect to some metric. Any metric involved in such calculations will result in a valid upper estimate. The main contribution of this paper is a result showing that one can find a metric that lets the corresponding upper estimate get arbitrarily close to the true value of RE.
\par
The tractability of the developed approach is confirmed by a closed-form computation of the restoration entropy for the celebrated Lanford system (see, e.g., \cite{belozyorov}). Meanwhile, computation or even fine estimation of TE and the likes has earned the reputation of an extremely complicated matter \cite{Koiran01}.
\par
The paper is organized as follows. Section \ref{sec:comm} offers the remote state estimation problem statement and presents the main assumptions. Section \ref{sec.main} contains the main results, which are illustrated by an example in Section \ref{sec:example}. Section \ref{sec:conclusions} summarizes the main results of the paper and outlines some open questions. The technical proofs of the main results are collected in the appendices.%

The following notations are adopted in this paper: $\log$ -- logarithm base $2$, $\text{\bf D} f$ -- Jacobian matrix of function $f$, $:=$ -- ``is defined to be'', $B_{\delta}(x)$ -- open ball of radius $\delta$ centered at $x$, $C^0(X,Y)$ -- set of all continuous functions from $X$ to $Y$, $P^t$ with $t\in\R$ -- $t$-th power of a symmetric positive definite matrix $P$. Given a matrix $A$, its singular values repeated in accordance to their multiplicities and ordered in the nonincreasing order, are denoted by $\alpha_i(A)$. If the matrix $A$ is parameterized: $A=A(x)$ or $A=A(t,x)$, then for the sake of brevity, the corresponding singular values will be denoted as $\alpha_i(x)$, or $\alpha_i(t,x)$, provided the choice of the matrix $A$ is clear from the context.%

\section{State estimation via limited bit-rate communication and restoration entropy}\label{sec:comm}

The objective of this section is to outline basic points of recent results that motivate the research reported in this paper.
\par
We consider time-invariant dynamical systems of the form
\begin{equation}\label{given_sys}
  \der{x}(t)=\varphi[x(t)], ~~t \in \mathfrak{T}_+,\quad x(0)\in K \subset \br^n
\end{equation}
in the following two cases:
\begin{enumerate}
\item[{\bf c-t)}] Either $\mathfrak{T}_+ = [0,\infty) \subset \br$, and the symbol $\der{x}$ stands for the derivative of the function:
$\der{x}(t) := \dot{x}(t)$;
\item[{\bf d-t)}] Or $\mathfrak{T}_+$ is the set of integers $t \geq 0$, and $\der{x}$ denotes the forward time-shift by one step:
$\der{x}(t) := x(t+1)$.
\end{enumerate}
In \eqref{given_sys}, $\varphi(\cdot)$ is of class $C^1$ and $K$ is a given compact set of initial states that are of our interest; in the case {\bf d-t)}, all considered time variables assume integer values by default.
\par
We deal with a situation where at a remote site $S_{\text{est}}$, direct observation of the time-varying state $x(t)$ is impossible but a reliable estimate $\widehat{x}(t)$ of $x(t)$ is needed at any time $t\in \mathfrak{T}_+$. Meanwhile, data may be communicated to $S_{\text{est}}$ from another site, where $x(t)$ is fully measured at time $t$. The main issue of our interest arises from the fact that the channel of communication between these sites allows to transmit only finitely many bits per unit time. {\it How large their number must be so that a good estimate can be generated?}
\par
We clarify this issue only in general terms by following \cite{MaPo_automatica,PartII} and \cite[Sec.~3.4]{MatveevSavkin_book08}, and refer the reader there for full details. The {\it channel} is a communication facility that transmits finite-bit data packets; transmission consumes time (during which the channel cannot process new packets). The time $t_\ast$ when the transfer of a packet is started, and its bit-size and contents are somewhat manipulable; the channel is used repeatedly, thus transmitting a flow of messages. Despite all variability in the ways of channel usage, the channel imposes an upper bound $b_+(r)$ on the total number of bits that can be transferred within any interval of duration $r$. On the positive side, there is a way to deliver no less that $b_-(r)$ bits. We consider the case where the two associated averaged per-unit-time amounts of bits converge to a common value $c = \lim_{r \to \infty} b_\pm(r)/r$ as the duration $r$ grows without limits; this value $c$ is called the {\it capacity} of the channel.
\par
In the situation at hands, the {\it observer} is composed of a {\it coder} and a {\it decoder}. The coder is located at the measurement site; its function is to generate the departure times $t_\ast$ and the messages to $S_{\text{est}}$ based on the preceding measurements, as well as on the initial estimate $\widehat{x}(0)$ and its accuracy $\delta>0$.
\begin{equation}
\label{exact}
\| x(0) - \widehat{x}(0)\| < \delta, \quad x(0), \widehat{x}(0) \in K.
\end{equation}
The {\it decoder} is built at the site $S_{\text{est}}$ and has access to $\widehat{x}(0)$ and $\delta>0$; its duty is to generate
an estimate $\widehat{x}(t)$ of the current state $x(t)$ at the current time $t$ based on the messages fully received through the channel prior to this time. Both the coder and decoder are aware of $\varphi(\cdot)$ and $K$ from \eqref{given_sys}.
\par
Given the plant \eqref{given_sys}, the possibility of building a reliable observer is dependent on the capacity $c$ of the employed channel \cite{MaPo_automatica,PartII}. Those values of $c$ that make reliable observation possible are in our focus. Their minimum is defined by the system itself, is called the {\it observability rate} $\mathscr{R}(\varphi,K)$, is measured in bits per unit time, and depends on the concept of observation reliability, as is specified in the following.

\begin{definition}[\cite{MaPo_automatica,PartII}]\label{def.exc}
An observer is said
\begin{enumerate}[{\bf i)}]
\item {\em to observe the system} \eqref{given_sys} if for any $\ve>0$ there is
$\delta(\ve)>0$ such that whenever \eqref{exact} holds with $\delta =\delta(\ve)$, the estimation error never exceeds $\ve$:
\begin{equation*}
\label{any.ext}
\|x(t) - \widehat{x}(t)\| \leq \ve\qquad \forall t \in \mathfrak{T}_+  ;
\end{equation*}
\item {\em to regularly observe} the system \eqref{given_sys} if
the anytime error is uniformly proportional to the initial error: there exist $\delta_\ast>0$ and $G>0$ such that
whenever $\delta\leq\delta_\ast$ in \eqref{exact},
    \begin{equation*}
    \label{horosho0}
    \|x(t) - \widehat{x}(t)\| \leq G \delta  \qquad \forall t \in \mathfrak{T}_+  ;
    \end{equation*}
\item {\em to finely observe}  the system \eqref{given_sys} if
in addition to {\rm ii)} the observation error exponentially decays to zero as time progresses: there exist $\delta_\ast>0$, $G>0$, and $\eta >0$ such that whenever $\delta\leq\delta_\ast$ in \eqref{exact}, the following holds:
    \begin{equation*}
    \label{horosho}
    \|x(t) - \widehat{x}(t)\| \leq G \delta \mathrm{e}^{-\eta t}  \qquad \forall t \in \mathfrak{T}_+.
    \end{equation*}
\end{enumerate}
\par
The system \eqref{given_sys} is said to be {\em {\bf i)} observable, {\bf ii)} regularly observable,
or {\bf iii)} finely observable} via a given communication channel if there is an observer that firstly, operates through this channel and secondly, observes, regularly observes,
or finely observes the system, respectively.
\par
The infimum value of the rates $c$ such that \eqref{given_sys} is observable in the selected (from {\rm i)--iii)}) sense via any channel with capacity $c$ is denoted by $\mathscr{R}(f,K)$, where the index $_{\text{\rm o}}, _{\text{\rm reg}}$, and $_{\text{\rm fine}}$ is attached to $\mathscr{R}$ in the cases {\rm i), ii)}, and {\rm iii)}, respectively.
\end{definition}

\par
The rates $\mathscr{R}_{\text{\rm o}}(\varphi,K)$, $\mathscr{R}_{\text{\rm reg}}(\varphi,K)$, $\mathscr{R}_{\text{\rm fine}}(\varphi,K)$ could be viewed as an answer to the key question posed at the start of the section, but only if a method to effectively compute them be available. Their definitions do not directly suggest such a method since they refer to complementing the system with an a priori uncertain object -- an observer. So the first step is to get rid of this uncertainty and to fully express those rates via features of the system only.
\par
Under certain technical assumptions, this feature is the {\it topological entropy} $H_{\text{top}}(\varphi,K)$ of the plant \eqref{given_sys} in the case of  $\mathscr{R}_{\text{\rm o}}(\varphi,K)$, which is simply equal to $H_{\text{top}}(\varphi,K)$; we refer the reader to \cite{Don11} for the definition of $H_{\text{top}}(\varphi,K)$. In the case of both $\mathscr{R}_{\text{reg}}(\varphi,K)$ and $\mathscr{R}_{\text{fine}}(\varphi,K)$, this is another characteristic of the system. It is introduced in \cite{PartII} for the continuous-time case, extended to the discrete-time one in \cite{Kawan_Entropy}, and is called the {\it restoration entropy} $H_{\text{res}}(\varphi,K)$.
\par
To recall its definition, we emphasize once more the following.

\begin{ass}\label{ass.smooth}
The map $\varphi:\R^n \rightarrow \R^n$ is of class $C^1$ and the set $K \subset \R^n$
is compact and forward-invariant.
\end{ass}

Further, $x(t,a)$ denotes the solution of the equation \eqref{given_sys} that starts with $x(0)=a$, and $\varphi^t$ stands for the time-$t$ map of the system, $a \mapsto x(t,a)$.
\par
Let a duration $\tau \in \mathfrak{T}_+$, a state $a\in\R^n$, and an ``error tolerance level'' $\delta >0$ be given. We denote by $N_{\varphi,K}(\tau,a,\delta)$ the minimal number of open $\delta$-balls needed to cover the image $\varphi^\tau[B_\delta(a)\cap K]$. Then
\begin{equation}\label{res_entropy}
H_{\text{res}}(\varphi,K):= {\lim_{\tau \to \infty}} \frac{1}{\tau}\varlimsup_{\delta \to 0} \sup_{a \in K} \log N_{\varphi,K}(\tau,a,\delta).%
\end{equation}
Here $\lim_{\tau \to \infty}$ exists by Fekete's lemma (see \cite[Lem.~2.1]{CoKaNa13} for a proof) and is due to the  subadditivity of the concerned quantity in $\tau$. The interest in \eqref{res_entropy} is caused by the fact that \cite{PartII}%
\begin{equation*}
  H_{\text{res}}(\varphi,K) = \mathscr{R}_{\text{reg}}(\varphi,K) = \mathscr{R}_{\text{fine}}(\varphi,K).
\end{equation*}
\par
Also, $H_{\text{res}}(\varphi,K) \geq H_{\text{top}}(\varphi,K)$ \cite{PartII,Kawan_Entropy}. Meanwhile, the concerned two concepts are not identical: $H_{\text{top}} < H_{\text{res}}$ for the logistic map \cite[Ex.~5.1]{PoMaPSYCO}, whereas \cite{Kawan_Entropy} offers a characterization of the hyperbolic systems with $H_{\text{top}} = H_{\text{res}}$ and suggests that $H_{\text{top}} = H_{\text{res}}$ is a relatively rare occurrence.
\par
The restoration entropy can be also linked to the {\it finite-time Lyapunov exponents} $t^{-1}\ln \alpha_i(t,x)$, where $\alpha_1(t,x) \geq \ldots \geq \alpha_n(t,x)$ are the singular values of the Jacobian matrix $\rmD\varphi^t(x)$. It is convenient for us to divide these exponents by $\ln 2$, which results in the change of the logarithm base:%
\begin{equation*}
  \Uplambda_i(t,x) := \frac{1}{t}\log \alpha_i(t,x).%
\end{equation*}
As is shown in \cite[Thm.~11]{PartII},%
\begin{align}\label{reslyapexp}
  H_{\res}(\varphi,K) &= \max_{x\in K} \varlimsup_{t \rightarrow \infty}\sum_{i=1}^n \max\{0,\Uplambda_i(t,x)\} \\
 &= \lim_{t\rightarrow\infty} \max_{x\in K}\sum_{i=1}^n\max\{0,\Uplambda_i(t,x)\}%
\end{align}
provided that the following holds.

\begin{ass}\label{ass.closeur}
The set $K$ is the closure of its interior.%
\end{ass}

\begin{remark}[\cite{PartII}]
\label{remineq}
If Assumption {\rm \ref{ass.closeur}} is dropped, the first relation from \eqref{reslyapexp} remains true provided that $\leq$ is put in place of $=$ in it.
\end{remark}

\par
The topological entropy and Lyapunov exponents are classic and long-studied concepts. However, their use in assessing $H_{\text{res}}$ is highly impeded by the fact that constructive practical evaluation of both $H_{\text{top}}$ and the limit in \eqref{reslyapexp} has earned the reputation of an intricate matter and is in fact a long-standing challenge that still remains unresolved in many respects.
\par
We offer an alternative machinery for evaluation of $H_{\text{res}}$. By the foregoing, it can also be used to upper estimate $H_{\text{top}}$ and to quantify finite-time Lyapunov exponents for large times.%

\section{Main ideas and results}\label{sec.main}

Due to practical needs, the definition \eqref{res_entropy} operates with balls in the Euclidean metric. Meanwhile, balls and Lyapunov exponents are standard in studies of dynamics on Riemannian manifolds, where its own metric tensor is assigned to every point. A homogeneous, state-independent metric was used in Section \ref{sec:comm}. The idea to modify the material of Section \ref{sec:comm} via assigning its own metric tensor to any point $x \in \br^n$, thus altering the neighboring balls and the finite-time Lyapunov exponents, may look as nothing but a complication with no reason. However, this hint is a keystone for a method that not only aids in practical estimation of $H_{\text{res}}$ but also carries a potential to compute $H_{\text{res}}$ with as high exactness as desired.
\par
To specify this, we denote by $\SC$ the linear space of the real symmetric $n \times n$ matrices, and by $\SC^+ \subset \SC$ the subset of the positive definite ones. A continuous mapping $P:K \rightarrow \SC^+$ gives rise to a Riemannian metric on $K$ \cite{Chavel06} by defining the state-dependent inner product\footnote{Here $P$ is not a Riemannian metric in the strict sense, insofar as $K$ is not necessarily a manifold.}%
\begin{equation*}
  \langle v,w \rangle_{P,x} := \langle P(x)v,w \rangle,%
\end{equation*}
where $\langle \cdot, \cdot \rangle$ is the standard Euclidean inner product.
\par
This Riemannian metric produces its own finite-time Lyapunov exponents and, on a more general level, singular values of $A(x) := \rmD \phi(x), x \in K$ for any map $\phi \in C^1(\br^n \to \br^n)$ such that $\phi(K) \subset K$. Indeed, then $A(x)$ should be viewed as an operator between two spaces, endowed with the inner products $\langle \cdot ,\cdot \rangle_{P,x}$ and $\langle \cdot ,\cdot \rangle_{P,\phi(x)}$, respectively. This obliges to treat the singular values of $A(x)$ as the square roots of the eigenvalues of the operator $\rmD \phi(x)^\ast \rmD \phi(x)$, where $\rmD \phi(x)^\ast$ is the adjoint to $\rmD \phi(x)$ with respect to that pair of inner products.
\begin{lemma}
\label{lem.for.sing}
The singular values  $\alpha_1^P(x|\phi) \geq \cdots \geq \alpha_n^P(x|\phi) \geq 0$ of the matrix $A(x) := \rmD \phi(x)$ in the metric $\langle \cdot,\cdot \rangle_P$ are the square roots of the solutions $\lambda$ for the following algebraic equation:%
\begin{equation}\label{eq_gensveq}
  \det\left[ A(x)\trn P(\phi(x))A(x) - \lambda P(x) \right] = 0.%
\end{equation}
These solutions are also the eigenvalues of the positive semi-definite matrix $B(x)^\top B(x)$, where
\begin{equation}
\label{def.b}
B(x):= P[\phi(x)]^{1/2}A(x)P(x)^{-1/2}.
\end{equation}
\end{lemma}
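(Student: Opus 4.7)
The plan is a short linear-algebra computation carried out with care, since the operator $A(x)=\dif\phi(x)$ acts between two different inner-product spaces: the source $\R^n$ is equipped with $\langle\cdot,\cdot\rangle_{P,x}$ and the target $\R^n$ with $\langle\cdot,\cdot\rangle_{P,\phi(x)}$. The singular values are by definition the square roots of the eigenvalues of $A(x)^\ast A(x)$, so the first step is to identify the adjoint $A(x)^\ast$ with respect to this pair of inner products. Using the defining identity $\langle Av,w\rangle_{P,\phi(x)}=\langle v,A^\ast w\rangle_{P,x}$ and the symmetry of $P(x)$, one gets $v\trn A\trn P(\phi(x))w=v\trn P(x)A^\ast w$ for all $v,w$, whence
\begin{equation*}
A(x)^\ast \;=\; P(x)^{-1}\,A(x)\trn\,P(\phi(x)).
\end{equation*}

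Next I would substitute this into $A^\ast A$ to obtain $A(x)^\ast A(x)=P(x)^{-1}A(x)\trn P(\phi(x))A(x)$. An eigenvalue $\lambda$ of this operator satisfies $\det\bigl(A^\ast A-\lambda I\bigr)=0$; multiplying by $\det P(x)\ne 0$ on the left inside the determinant gives exactly
\begin{equation*}
\det\bigl[A(x)\trn P(\phi(x))A(x)-\lambda P(x)\bigr]=0,
\end{equation*}
which is the generalized eigenvalue equation \eqref{eq_gensveq}. This proves the first assertion and establishes the stated formula for the singular values.

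For the second assertion I would exhibit an explicit similarity between $B(x)\trn B(x)$ and $A(x)^\ast A(x)$. Using \eqref{def.b} and the symmetry of $P(\phi(x))^{1/2}$,
\begin{equation*}
B(x)\trn B(x)=P(x)^{-1/2}A(x)\trn P(\phi(x))A(x)P(x)^{-1/2}
=P(x)^{1/2}\bigl[A(x)^\ast A(x)\bigr]P(x)^{-1/2},
\end{equation*}
so $B\trn B$ and $A^\ast A$ are similar and therefore share the same spectrum. Because $B\trn B$ is positive semidefinite, its eigenvalues are nonnegative, confirming that the square roots appearing in the statement are well defined.

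There is no real obstacle here; the only point requiring a bit of attention is bookkeeping the distinction between the two base points $x$ and $\phi(x)$ when computing the adjoint, so as not to collapse $P(x)$ and $P(\phi(x))$ by mistake. Everything else is a direct manipulation of determinants and a one-line similarity transformation.
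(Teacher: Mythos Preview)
Your proof is correct and follows essentially the same route as the paper: compute the adjoint $A(x)^\ast = P(x)^{-1}A(x)\trn P(\phi(x))$, reduce the characteristic equation of $A^\ast A$ to \eqref{eq_gensveq} by multiplying through by $P(x)$, and identify the spectrum of $B\trn B$ with that of $A^\ast A$ via the conjugation $P(x)^{1/2}[\,\cdot\,]P(x)^{-1/2}$. The only cosmetic difference is that the paper phrases the last step as a manipulation of the determinant equation rather than as a similarity of matrices, but the content is identical.
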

\begin{proof}
To compute the adjoint to $A(x)$, we observe that
\begin{align*}
  &\langle A(x)v,w \rangle_{P,\phi(x)} = \langle P(\phi(x))A(x)v,w \rangle
  \\
	&= \langle A(x)v,P(\phi(x))w \rangle  = \langle v,A(x)\trn P(\phi(x))w \rangle\\
																	 &= \langle v,P(x)P(x)^{-1}A(x)\trn P(\phi(x))w \rangle \\
																	 &= \langle P(x)v, P(x)^{-1}A(x)\trn P(\phi(x))w \rangle \\
																	 &= \langle v, P(x)^{-1}A(x)\trn P(\phi(x))w \rangle_{P,x}%
\end{align*}
implying $A(x)^* = P(x)^{-1}A(x)\trn P(\phi(x))$. It remains to note that the associated singular value equation%
\begin{equation}\label{eq_gensveq0}
  \det \left[ P(x)^{-1}A(x)\trn P(\phi(x))A(x) - \lambda I_n \right] = 0
\end{equation}
is equivalent to both \eqref{eq_gensveq} and $\det [B(x)^\top B(x) - \lambda I] =0$, which can be seen via multiplying the matrix inside $[\ldots]$ in \eqref{eq_gensveq0} by $P(x) [\ldots]$ and $P(x)^{1/2} [\ldots] P(x)^{-1/2}$ in the first and second case, respectively.
\end{proof}

\subsection{Discrete-time systems}

Now we are in a position to state our first main result.
\begin{theorem}
\label{thm:main_DT}
Let the case {\bf d-t)} and Assumption~{\rm \ref{ass.smooth}} hold. Then the following statements are true (where $\log 0 := - \infty$):
\begin{enumerate}
\item[(i)] Any map $P(\cdot) \in C^0(K,\SC^+)$ gives rise to the following bound on the restoration entropy of the system \eqref{given_sys}:
    \begin{multline}
    \label{res.ineq}
  H_{\res}(\varphi,K) \leq \max_{x\in K} \varSigma^P (x|\varphi),
  \\
  \text{\rm where} \; \varSigma^P (x|\varphi) := \sum_{i=1}^n\max\{0,\log\alpha_i^P(x|\varphi)\}.%
\end{multline}
\item[(ii)] Let the set $K$ satisfy Assumption \ref{ass.closeur} and the Jacobian matrix $\rmD\varphi(x)$ be invertible for every $x \in K$. Then for any $\varepsilon>0$, there exists $P \in C^0(K,\SC^+)$ such that%
\begin{equation*}
  H_{\res}(\varphi,K) \geq \max_{x\in K} \varSigma^P (x|\varphi) - \varepsilon.%
\end{equation*}
\end{enumerate}
\end{theorem}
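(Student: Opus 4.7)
Plan: For part~(i) I would use the classical volume / covering argument adapted to a $P$-Riemannian metric. Since $K$ is compact and $P$ is continuous, $P$-balls and Euclidean balls are Lipschitz-equivalent, so the minimal count of $P$-balls of radius $\delta$ needed to cover $\varphi^t[B_\delta^P(a) \cap K]$ has the same $\delta \to 0$, $t \to \infty$ asymptotics as the Euclidean quantity $N_{\varphi,K}(t,a,\delta)$ from~\eqref{res_entropy}. For fixed $a \in K$, that image is, up to an $o(\delta)$ distortion coming from the nonlinearity, the ellipsoid $\rmD \varphi^t(a) \cdot B_\delta^P(a)$, whose semi-axes in the $P$-metric at $\varphi^t(a)$ are $\delta\,\alpha_i^P(a|\varphi^t)$. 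Covering such an ellipsoid by $P$-balls of radius $\delta$ requires at most $\prod_{i}\max\{1,\alpha_i^P(a|\varphi^t)\}$ balls up to a factor tending to $1$ as $\delta \to 0$. Taking $\log$, then $\varlimsup_{\delta\to 0}$, then $\sup_{a\in K}$, and dividing by $t$ bounds the $t$-level contribution to $H_{\res}$ by $\tfrac{1}{t}\max_{a\in K}\sum_i \max\{0,\log\alpha_i^P(a|\varphi^t)\}$. A Horn--Weyl submultiplicativity applied to $\rmD\varphi^t(x) = \prod_{k=0}^{t-1}\rmD\varphi(\varphi^k(x))$, viewed as operators between consecutive $P$-inner-product spaces, collapses this uniformly to $\max_{y\in K}\varSigma^P(y|\varphi)$ independently of $t$.

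Part~(ii), the converse, is where the real work lies. Using \eqref{reslyapexp} (which requires Assumption~\ref{ass.closeur}), I would first pick $T \in \N$ so large that
\[
\tfrac{1}{T}\max_{x\in K}\sum_i \max\{0,\log\alpha_i(T,x)\} \leq H_{\res}(\varphi,K) + \varepsilon/2.
\]
The aim is then to design a continuous $P: K \to \SC^+$ whose one-step $P$-singular values at $x$ approximate $\alpha_i(T,x)^{1/T}$ with error $O(1/T)$ uniform in $x$. The purely linear case $\varphi(x) = Ax$ makes the target formula transparent: the constant metric $P \equiv ((A^T)^{\top} A^T)^{-1/T}$ yields $\alpha_i^P(x|\varphi) = \alpha_i(T,x)^{1/T}$ exactly, whence $\varSigma^P(x|\varphi) = \tfrac{1}{T}\sum_i \max\{0,\log\alpha_i(T,x)\}$.

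In the nonlinear case, I would take $P(x)$ to be an appropriate symmetric-matrix function of $\rmD\varphi^T(x)^{\top}\rmD\varphi^T(x)$ (e.g.\ its $-1/T$ power), possibly averaged along the orbit segment $\{x,\varphi(x),\ldots,\varphi^{T-1}(x)\}$ to absorb the nonlinear ``boundary mismatch.'' The key algebraic identity here is $\rmD\varphi^T(\varphi(x)) = \rmD\varphi(\varphi^T(x)) \cdot \rmD\varphi^T(x) \cdot \rmD\varphi(x)^{-1}$, which uses the invertibility hypothesis on $\rmD\varphi$. Inserting this into the expression $P(x)^{-1}\rmD\varphi(x)^{\top}P(\varphi(x))\rmD\varphi(x)$ from Lemma~\ref{lem.for.sing} lets the ``bulk'' telescope to $(\rmD\varphi^T(x)^{\top}\rmD\varphi^T(x))^{1/T}$ in suitable coordinates, while the residual is a conjugation by $\rmD\varphi(\varphi^T(x))\rmD\varphi(x)^{-1}$, whose singular values are uniformly bounded on $K$ by compactness. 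Its logarithmic contribution to $\varSigma^P(x|\varphi)$ is therefore $O(1/T)$, which can be driven below $\varepsilon/2$ by enlarging $T$.

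I expect two main obstacles. The first is making $P$ genuinely continuous in $x$: the SVD of $\rmD\varphi^T(x)^{\top}\rmD\varphi^T(x)$ can have eigenvalue crossings, so I would prefer a symmetric-function-of-the-matrix construction (which is automatically continuous on the cone of positive definite matrices) over any direct SVD-frame construction, or else resort to a partition-of-unity patching. The second and more serious difficulty is to turn the informal telescoping sketched above into an estimate valid uniformly in $x \in K$ \emph{and} in the direction index $i$, rather than merely averaged over directions. Assumption~\ref{ass.closeur} enters only through the equality form of~\eqref{reslyapexp} used in selecting $T$.
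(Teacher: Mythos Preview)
Your plan for part~(i) is essentially the paper's argument: the paper also reduces to the Euclidean formula~\eqref{reslyapexp} via a uniform comparison of $\varSigma^P$ and $\varSigma^I$ (Horn's inequality on $P(\varphi^t(x))^{1/2}\,\rmD\varphi^t(x)\,P(x)^{-1/2}$ gives the two-sided bound), and then uses subadditivity of $t\mapsto \varSigma^P(t,x)$ together with a Fekete-type lemma to pass from the time-$t$ bound to the time-$1$ bound. So part~(i) is fine.

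Part~(ii) has a genuine gap, and it is exactly the one you flag as your ``second and more serious difficulty.'' Your linear-case warm-up is already incorrect: for a non-normal $A$, the constant metric $P=((A^T)^\top A^T)^{-1/T}$ does \emph{not} give $\alpha_i^P(A)=\alpha_i(A^T)^{1/T}$. By Lemma~\ref{lem.for.sing} these $P$-singular values are the singular values of $M^{-1/(2T)}AM^{1/(2T)}$ with $M=(A^T)^\top A^T$, and conjugation by a positive matrix does not send the singular values of $A$ to the $T$-th roots of those of $A^T$ unless $A$ is normal. The $T=1$ case works trivially, but already $T=2$ with a $2\times 2$ Jordan block breaks equality. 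In the nonlinear case the situation is worse: the cocycle identity you quote turns $P(\varphi(x))$ into the $-1/T$ power of $A(x)^{-\top}A^{(T)}(x)^\top C(x)A^{(T)}(x)A(x)^{-1}$ with $C(x)=\rmD\varphi(\varphi^T(x))^\top\rmD\varphi(\varphi^T(x))$, and there is no telescoping that isolates $(A^{(T)}(x)^\top A^{(T)}(x))^{1/T}$ from this, because matrix $1/T$-th powers do not commute through products.

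What is missing is the correct notion of ``averaging along the orbit.'' The paper's answer is to work not with matrix powers but with the Riemannian barycenter on $\SC^+$ in the trace metric: one sets $Q(x)=\bary\bigl(I,\;A^{(1)}(x)^{-1}\ast I,\;\ldots,\;A^{(N-1)}(x)^{-1}\ast I\bigr)$ and $P(x)=Q(x)^{-1}$. The point is that this barycenter satisfies a \emph{vectorial} contraction property (Proposition~\ref{prop_barycenter}) with respect to the partial order $\preceq$ of~\eqref{def.prec}, which is precisely what controls all partial sums $\sum_{i\le k}\log\alpha_i^P$ simultaneously---i.e., what turns your ``informal telescoping'' into the uniform-in-direction estimate you correctly identified as the crux. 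No amount of tweaking the functional calculus $M\mapsto M^{-1/T}$ will produce this majorization; the barycenter and the associated vectorial distance $\vec d$ are the essential missing tools.
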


The proof of this theorem is given in Appendix~\ref{app.dtc}.
\par
In Theorem \ref{thm:main_DT}, $\max_{x \in K}$ is attained since $K$ is compact by Assumption \ref{ass.smooth} and $\alpha_i^P(x|\varphi)$ depend continuously on $x$. The latter holds since the $\alpha_i^P(x|\varphi)$ are the singular values of the matrix \eqref{def.b}, which is continuous in $x$, whereas the singular values continuously depend on the matrix \cite[Cor.~7.4.3.a]{Horn}.
\par
The claim {\bf (i)} converts {\em any} positive definite matrix function $P$ into an upper estimate on $H_{\text{res}}(\varphi,K)$, while taking no limits as $t\rightarrow\infty$, unlike \eqref{res_entropy} or \eqref{reslyapexp}. Meanwhile, {\bf (ii)} shows that this estimate can be made {\em as tight as one wishes} by a proper choice of $P$. So the presented method allows for computation of $H_{\text{res}}(\varphi,K) $ with as high exactness as desired.
\par

The claim (i) of Theorem \ref{thm:main_DT} extends \cite[Thm.~12]{MaPo_automatica} from the case of only constant $P$'s to the class of metrics of the most general type. It is shown in \cite{MaPo_automatica} that the discussed method is largely constructive: intelligent choices of $P$ and simple lower bounds on $H_{\text{res}}$ result in closed-form expressions of $H_{\text{res}}$ in terms of the parameters of some classic prototypical chaotic systems. The claim {\bf (ii)} proves that these are not accidental successes but are manifestations of a fundamental trait of the approach.%

\begin{remark}
\label{re.dd}
\rm
Under the assumptions of (ii) in Theorem \ref{thm:main_DT}, the claim (ii) yields the following new and exact formula:
\begin{equation}
\label{var.formula}
  H_{\res}(\varphi,K) = \inf_{P \in C^0(K,\SC^+)}\max_{x\in K} \varSigma^P (x|\varphi).%
\end{equation}
In fact, $\inf_P$ can be taken only over $P$'s that are extendible to an open neighborhood $V_P$ of $K$ as $C^\infty$-smooth mappings to $\SC^+$. (Since this is not used in the paper, the proof is omitted.)
\end{remark}

\begin{corollary}
\label{cor.dd}
Let $K$ meet Assumption \ref{ass.closeur} and $\mathfrak{G}:= \{ \varphi \in C^1( \br^n \to \br^n): \varphi(K) \subset K , \; \exists \rmD\varphi(x)^{-1} \; \forall x \in K \}$ be endowed with the topology of uniform convergence of the function and its first derivatives on $K$. The map $\varphi \in \mathfrak{G} \mapsto H_{\res}(\varphi,K)$ is upper semi-continuous: $H_{\res}(\varphi,K) \geq \displaystyle{\varlimsup_{\phi \to \varphi, \phi\in \mathfrak{G}}} H_{\res}(\phi,K) \; \forall \varphi \in \mathfrak{G}$.
\end{corollary}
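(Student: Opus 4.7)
My plan is to exploit the variational formula of Remark~\ref{re.dd} by representing $H_{\res}(\varphi,K)$ as an infimum of a family of functionals that depend continuously on $\varphi$, and then invoke the elementary fact that an infimum of continuous functions is upper semi-continuous. Concretely, I would write
$$H_{\res}(\phi,K) = \inf_{P \in C^0(K,\SC^+)} F_P(\phi), \qquad F_P(\phi) := \max_{x \in K} \varSigma^P(x|\phi),$$
which is legitimate for every $\phi \in \mathfrak{G}$: by the definition of $\mathfrak{G}$ the Jacobian $\rmD\phi(x)$ is invertible on $K$, and by hypothesis $K$ satisfies Assumption~\ref{ass.closeur}, so Remark~\ref{re.dd} applies pointwise on $\mathfrak{G}$. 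The whole task then reduces to proving continuity of $F_P$ on $\mathfrak{G}$ for each fixed $P$.

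The continuity claim unfolds in four small steps. First, convergence $\phi \to \varphi_0$ in $\mathfrak{G}$ means by definition that both $\phi$ and $\rmD\phi$ converge uniformly on $K$. Second, since $P$ is continuous on the compact $K$, it is uniformly continuous there, so the matrix field $B_\phi(x) := P(\phi(x))^{1/2}\rmD\phi(x)P(x)^{-1/2}$ of \eqref{def.b} converges uniformly in $x \in K$ to $B_{\varphi_0}(x)$. Third, by \cite[Cor.~7.4.3.a]{Horn} singular values depend continuously on the matrix, so $\alpha_i^P(\cdot|\phi) \to \alpha_i^P(\cdot|\varphi_0)$ uniformly on $K$ for every $i$. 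Fourth, because $\rmD\varphi_0(x)$ is invertible on the compact $K$, the smallest singular value $\alpha_n^P(x|\varphi_0)$ admits a uniform positive lower bound; shrinking the $C^1$-neighborhood of $\varphi_0$ if necessary, all $\alpha_i^P(x|\phi)$ therefore lie in a common closed interval $[c,C] \subset (0,\infty)$ independent of $x \in K$ and of $\phi$ in that neighborhood. On such an interval $\log$ is Lipschitz, so $(x,\phi) \mapsto \varSigma^P(x|\phi) = \sum_{i=1}^n \max\{0,\log \alpha_i^P(x|\phi)\}$ is jointly continuous, and the standard fact that the maximum over a compact parameter set of a jointly continuous function depends continuously on the external variable gives continuity of $F_P$ at $\varphi_0$.

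To close, fix $\varepsilon > 0$ and, using the variational formula at $\varphi_0$, pick $P^\ast \in C^0(K,\SC^+)$ with $F_{P^\ast}(\varphi_0) \leq H_{\res}(\varphi_0,K) + \varepsilon$. For $\phi \in \mathfrak{G}$ sufficiently close to $\varphi_0$, Theorem~\ref{thm:main_DT}(i) (which requires only $\phi(K) \subset K$, hence applies throughout $\mathfrak{G}$) together with the just-proved continuity of $F_{P^\ast}$ gives
$$H_{\res}(\phi,K) \leq F_{P^\ast}(\phi) \leq F_{P^\ast}(\varphi_0) + \varepsilon \leq H_{\res}(\varphi_0,K) + 2\varepsilon,$$
and letting $\varepsilon \to 0$ yields $\varlimsup_{\phi \to \varphi_0,\,\phi \in \mathfrak{G}} H_{\res}(\phi,K) \leq H_{\res}(\varphi_0,K)$, as required.

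I do not anticipate any serious obstacle once the variational formula is in hand: the genuine analytic content sits entirely inside Theorem~\ref{thm:main_DT}(ii), and the corollary is essentially a structural consequence. The one technical point demanding care is to secure a uniform positive lower bound on the singular values $\alpha_i^P(x|\phi)$, so that $\log$ is safely applied and joint continuity of $\varSigma^P$ is automatic; this is exactly what the invertibility clause built into the definition of $\mathfrak{G}$, combined with compactness of $K$ and the $C^1$ topology, provides.
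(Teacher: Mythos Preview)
Your proposal is correct and follows essentially the same route as the paper: represent $H_{\res}(\varphi,K)$ via the variational formula of Remark~\ref{re.dd}, show that for each fixed $P$ the functional $\varphi\mapsto\max_{x\in K}\varSigma^P(x|\varphi)$ is continuous on $\mathfrak{G}$, and conclude by the fact that a pointwise infimum of continuous functions is upper semi-continuous. Your final $\varepsilon$-argument is just the standard unfolding of that last fact, and your explicit treatment of the uniform positive lower bound on the singular values (ensuring $\log$ stays finite and Lipschitz) fills in a detail the paper leaves implicit.
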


Indeed, the $\alpha^P_i(x|\varphi)$ continuously depend not only on $x$ but also on the map $\varphi \in \mathfrak{G}$, as can be easily seen by the analysis of the arguments in the second paragraph following Theorem \ref{thm:main_DT}. Hence, $\varSigma^P (x|\varphi)$ and $\max_{x\in K}\varSigma^P (x|\varphi)$ also depend on $\varphi \in \mathfrak{G}$ continuously. It remains to note that the infimum (over $P$'s, in our case) of continuous functions is upper semi-continuous; see, e.g., \cite[15.23]{Sch97}.
\par
In a practical setting, Corollary \ref{cor.dd} asserts that any upper estimate of $H_{\text{res}}$ is robust: if an upper estimate is established for a nominal model (let the above $\varphi$ be associated with this model), this estimate remains true, possibly modulo small correction, under small uncertainties in the model and perturbations of its parameters. Here the ``smallness'' of the correction goes to zero as so do the uncertainties and perturbations.%

\subsection{Continuous-time systems}

For continuous-time systems, an analogue to equation \eqref{eq_gensveq} looks as follows:%
\begin{gather}\label{eq_P_CT}
  \det\bigl\{ 2 [P(x)\rmD \varphi(x)]^{\text{sym}} + \dot P(x) -\lambda P(x) \bigr\} = 0,%
\end{gather}
where $B^{\text{sym}} := (B+B^\top)/2$ is the symmetric part of the matrix $B \in \br^{n \times n}$ and $\dot{P}$ is the {\it orbital derivative}:
\begin{equation}
\label{orb.der}
\dot{P}(a) := \lim_{\tau \to 0+} \frac{P[x(\tau,a)] - P[a]}{\tau} .
\end{equation}
This derivative is equal to $\dif P(x) \varphi(x)$ for continuously differentiable $P$'s. However, we shall consider maps $P(\cdot)$ with a limited differentiability, as is described in the following.
\begin{ass}
\label{ass.smoothP}
The map $P(\cdot) \in C^0(K,\SC^+)$ is such that for any $a \in \br^n$, the limit in \eqref{orb.der} exists and is orbitally continuous, i.e., the function $t \mapsto \dot{P}[x(t,a)]$ is continuous.
\end{ass}
\begin{remark}
\label{remroooot}
The $x$-dependent roots of the algebraic equation \eqref{eq_P_CT} are the eigenvalues of the symmetric matrix%
\begin{equation*}
  P(x)^{-1/2} \{2 [P(x)\rmD \varphi(x)]^{\text{\rm sym}} + \dot P(x) \}P(x)^{-1/2}.%
\end{equation*}
\end{remark}
So these roots are real. Being repeated in accordance with their multiplicity, they are denoted by $\varsigma_i^P(x)$, ${i=1,\ldots,n}$.

\begin{theorem}
\label{thm:main_CT}
Suppose that the case {\bf c-t)} and Assumption~{\rm \ref{ass.smooth}} hold. Then the following statements are true:
\begin{enumerate}
\item[(i)] For any map $P(\cdot)$ satisfying Assumption~{\rm \ref{ass.smoothP}},
\begin{equation}
\label{djdj.dff}
  H_{\res}(\varphi,K) \leq \frac{1}{2\ln 2}\max_{x\in K}\sum_{i=1}^n\max\{0,\varsigma_i^P(x)\}.%
\end{equation}
\item[(ii)] Suppose that the set $K$ satisfies Assumption \ref{ass.closeur}. Then for any $\varepsilon>0$, there exists a map $P(\cdot)$ for which Assumption~{\rm \ref{ass.smoothP}} and the following inequality are fulfilled:
\begin{equation*}
  H_{\res}(\varphi,K) \geq \frac{1}{2\ln 2}\max_{x\in K}\sum_{i=1}^n\max\{0,\varsigma_i^P(x)\} - \varepsilon.%
\end{equation*}
\end{enumerate}
\end{theorem}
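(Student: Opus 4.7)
The plan is to reduce the continuous-time claim to the discrete-time Theorem~\ref{thm:main_DT} by considering the time-$T$ map $\varphi^T$ and then letting $T \to 0^+$. The scaling identity $H_{\res}(\varphi^T,K) = T \cdot H_{\res}(\varphi,K)$ -- which follows from $N_{\varphi^T,K}(k,a,\delta) = N_{\varphi,K}(kT,a,\delta)$ -- converts bits-per-step for the sampling into bits-per-unit-time for the flow. The bridge between $\alpha_i^P(x|\varphi^T)$ and $\varsigma_i^P(x)$ is the identity, verified by differentiating $Q(x,T) := \rmD\varphi^T(x)\trn P(\varphi^T(x)) \rmD\varphi^T(x)$ at $T=0$ using $\rmD\varphi^0 = I$, $\frac{d}{dT}|_{T=0} \rmD\varphi^T(x) = \rmD\varphi(x)$, and Assumption~\ref{ass.smoothP},
\[
\left.\frac{d}{dT}\right|_{T=0} Q(x,T) = 2[P(x)\rmD\varphi(x)]^{\text{sym}} + \dot P(x).
\]
Since $\alpha_i^P(x|\varphi^T)^2$ are the generalized eigenvalues of $(Q(x,T), P(x))$, both equal to $1$ at $T=0$, first-order eigenvalue perturbation (Courant--Fischer) yields $\alpha_i^P(x|\varphi^T)^2 = 1 + T\varsigma_i^P(x) + o(T)$, uniformly in $x\in K$ by compactness and orbital continuity of $\dot P$.

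\textbf{Proof of (i).} Fix $P$ as in Assumption~\ref{ass.smoothP} and $T > 0$. Applying Theorem~\ref{thm:main_DT}(i) to $\varphi^T$ and dividing by $T$ gives
\[
H_{\res}(\varphi, K) \leq \frac{1}{T}\max_{x\in K} \sum_{i=1}^n \max\{0, \log\alpha_i^P(x|\varphi^T)\}.
\]
From the Taylor expansion above, $\frac{1}{T}\log \alpha_i^P(x|\varphi^T) \to \varsigma_i^P(x)/(2\ln 2)$ uniformly in $x \in K$; passing $T \to 0^+$ through the finite sum and the $\max$ (a continuous operation on $C^0(K)$) delivers \eqref{djdj.dff}.

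\textbf{Proof of (ii).} Given $\varepsilon > 0$, I would fix $T > 0$ small (to be chosen below) and apply Theorem~\ref{thm:main_DT}(ii) to $\varphi^T$, whose Jacobian $\rmD\varphi^T(x)$ is invertible since $\varphi^T$ is a diffeomorphism, with tolerance $T\varepsilon/2$. This yields $P \in C^0(K,\SC^+)$ -- arrangeable to be $C^\infty$ and hence to satisfy Assumption~\ref{ass.smoothP} by the refinement noted in Remark~\ref{re.dd} -- with $TH_{\res}(\varphi,K) \geq \max_x \sum_i \max\{0,\log\alpha_i^P(x|\varphi^T)\} - T\varepsilon/2$. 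Using the same uniform Taylor expansion in reverse (via $\max\{0,a+b\} \geq \max\{0,a\} - |b|$ applied termwise) gives
\[
\max_x \sum_i \max\{0,\log\alpha_i^P(x|\varphi^T)\} \geq \frac{T}{2\ln 2} \max_x \sum_i \max\{0,\varsigma_i^P(x)\} - c T^2,
\]
where $c = c(P,\varphi,K)$ bounds the second-order remainder. Dividing by $T$ yields the conclusion up to $\varepsilon/2 + cT$, which is at most $\varepsilon$ for $T \leq \varepsilon/(2c)$.

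\textbf{Main obstacle.} The delicate point is that $P$ depends on $T$ through the chosen tolerance $T\varepsilon/2$, so \emph{a priori} the constant $c(P,\varphi,K)$ could degenerate as $T \to 0$ and spoil the argument. The hard part will therefore be showing that the $P$ produced by Theorem~\ref{thm:main_DT}(ii) -- typically an average of canonical quadratic forms along $\varphi^T$-orbits -- admits a $C^2$-seminorm bounded uniformly in small $T$. This should follow by exploiting that $\rmD\varphi^T(x) = I + T\rmD\varphi(x) + O(T^2)$ smoothly in $x$, so the $T$-dependence inside the construction is regular; the resulting $C^2$-bounds depend only on $\varphi,K,\varepsilon$, and the circular choice of $T$ after $c$ becomes admissible.
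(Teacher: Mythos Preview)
Your reduction to the discrete-time theorem is natural, but the circularity you flag in Part~(ii) is a genuine obstruction, and your proposed resolution does not close it. The metric $P$ produced by Theorem~\ref{thm:main_DT}(ii) for the map $\varphi^T$ is a barycenter over $N$ points of the discrete orbit, where $N$ is dictated by how fast the limit in \eqref{reslyapexp} (for $\varphi^T$) converges within tolerance $T\varepsilon/2$. Unwinding the scalings shows that $NT$ must be of order a fixed continuous time $T_0 = T_0(\varepsilon)$, so $N \sim T_0/T \to \infty$ as $T \to 0$. The $C^2$-seminorm of a barycenter of $N$ matrix-valued functions built from $\rmD\varphi^{kT}(x)$, $k = 0,\ldots,N-1$, has no reason to stay bounded as $N \to \infty$ merely because $\rmD\varphi^T = I + O(T)$; what you would actually need is convergence of these discrete barycenters \emph{in $C^2$} to a limiting object, which is essentially as hard as the continuous-time statement you are trying to prove.

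The paper sidesteps this limit entirely. It fixes a single large $T$ (from \eqref{reslyapexp}) and constructs $P$ directly in continuous time as the inverse of the barycenter of the \emph{measure} supported on the arc $\{A^{(s)}(x)^{-1}\ast I : 0\le s\le T\}$. The key inequality relating $\varsigma_i^P(x)$ to $T^{-1}\vec\sigma[A^{(T)}(x)]$ (Proposition~\ref{lem_ct}) is then obtained not by Taylor-expanding in a small sampling step, but by computing the one-sided derivative at $t=0^+$ of $\vec d\{Q[\varphi^t(x)],A^{(t)}(x)\ast Q(x)\}$ via the directional-derivative formula for singular values (Corollary~\ref{cor_singval_der}) combined with the contraction property of barycenters. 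This delivers the bound for the fixed $T$ with no second-order remainder and hence no circular dependence. Your Part~(i) argument, by contrast, is essentially sound once uniformity in $x$ of the expansion $\alpha_i^P(x|\varphi^T)^2 = 1 + T\varsigma_i^P(x) + o(T)$ is secured; note, however, that Assumption~\ref{ass.smoothP} guarantees only \emph{orbital} continuity of $\dot P$, so this uniformity still requires justification.
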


The proof of this theorem is given in Appendix~\ref{app.ctc}. As in the case of discrete time, analogs of Remark~\ref{re.dd} and Corollary~\ref{cor.dd} follow from Theorem~\ref{thm:main_CT}.%

\section{Example: The Lanford system}\label{sec:example}

Consider the following system:%
\begin{align}\label{lanford}
\begin{split}
  \dot x &= (a-1)x-y+xz \\
  \dot y &= x +(a-1)y+yz,~~~~x,y,z\in\mathbb{R},~~a>0 \\
  \dot z &= az-(x^2+y^2+z^2)
\end{split}
\end{align}
This system is attributed to Lanford and was studied in many publications, see, e.g. \cite{belozyorov}. It is well-known that the system \eqref{lanford} has only two equilibrium points:%
\begin{equation*}
  O_1=[0,0,0]\trn,~~O_2=[0,0,a]\trn.%
\end{equation*}
The value $a=2/3$ is of particular interest, since then there is a heteroclinic orbit connecting the equilibria \cite{belozyorov}.

Let $K$ be some compact forward-invariant set of \eqref{lanford}.

\begin{remark}
In $K$, we necessarily have $z\ge 0$.%
\end{remark}
\par
Indeed, if $z(0)<0$, then  {the third equation of (\ref{lanford}) implies} that the solution escapes to $-\infty$ in finite time ($\dot z<-z^2$).%
\par
In \eqref{eq_P_CT}, the Jacobian matrix is now given as follows:%
\begin{equation*}
  \rmD \varphi(x,y,z)=\left[
\begin{array}{ccc}
a-1+z & -1 & x\\ 1 & a-1+z & y \\ -2x & -2y & a-2z
\end{array}
 \right].%
\end{equation*}
In (i) of Theorem~\ref{thm:main_CT}, we take the following matrix function\footnote{For a more detailed treatment of the metric in this form for related problems of stability of forced oscillations, see \cite{PoMaTAC}.}
\begin{equation}\label{lanford_P}
P(x,y,z)=P_0\mathrm{e}^{w(x,y,z)}=\underbrace{\left[\begin{array}{ccc}1 & 0 & 0\\ 0 & 1 & 0\\ 0 & 0 & 1/2\end{array}\right]}_{=:P_0}\exp\underbrace{\left(\frac{2z}{a}\right)}_{=:w}.%
\end{equation}
Straightforward calculations yield%
\begin{align*}
  P\rmD \varphi{(x)} &= \left[ \begin{array}{ccc} a-1+z & -1 & x\\ 1 & a-1+z & y \\ -x & -y & \frac12(a-2z)\end{array} \right]\mathrm{e}^w,\\
	  \rmD \varphi{(x)}\trn P &= \left[ \begin{array}{ccc} a-1+z & 1 & -x\\ 1 & a-1+z & -y \\ x & y & \frac12(a-2z)\end{array} \right]\mathrm{e}^w,%
\end{align*}
and therefore%
\begin{multline*}
 2 [P(x)\rmD \varphi(x)]^{\text{sym}} = \rmD \varphi(x)\trn P+P\rmD \varphi{(x)}\\
 = \mathrm{e}^w  \left[\begin{array}{ccc}2(a-1+z)& 0& 0\\0 &2(a-1+z)&0\\0 &0&a-2z\end{array}\right].%
\end{multline*}
At the same time,
\begin{align*}
\dot P -\lambda P &= \left(\frac{2\dot z}{a}-\lambda\right)\mathrm{e}^w P_0 \\ &\stackrel{(\ref{lanford})}{=}
\mathrm{e}^w\left(\frac{2}{a}(az-x^2-y^2-z^2)-\lambda\right)\left[\begin{array}{ccc}1 & 0 & 0\\ 0 & 1 & 0\\ 0 & 0 & 1/2\end{array}\right].%
\end{align*}
Finally, the solutions of (\ref{eq_P_CT}) can easily be found:
\begin{eqnarray}
  \lambda_1 &=& 2(a-2z)+2\frac{az-z^2-x^2-y^2}{a}\nonumber\\& &\le -\frac{2z^2}{a}-2z+2a\stackrel{\tiny (z\ge 0)}{\le} 2a,\label{n1lambda}\\
  \lambda_{2,3} &=& 2(a-1+z)+2\frac{az-z^2-x^2-y^2}{a}\nonumber\\ & &\le -\frac{2z^2}{a}+4z+2(a-1)\le 2(2a-1).\label{n2lambda}
\end{eqnarray}
By (i) of Theorem \ref{thm:main_CT}, the following upper estimate holds:%
\begin{align*}
H_\res(K) &\le  \frac{1}{2\ln 2}\max_{K}\big[\max\{0,\lambda_1 \} +2 \max\{ 0, \lambda_{2,3} \} \big]\nonumber\\
&= \frac{1}{2\ln 2}\max\{ \max_K\lambda_1, 2\max_K\lambda_{2,3}, \max_K(\lambda_1+2\lambda_{2,3}) \} .\nonumber
\end{align*}
Maximizing $\lambda_1+2\lambda_{2,3}$ over $z\in\R$ yields%
\begin{align*}
  \max_{x,y,z}(\lambda_1+2\lambda_{2,3}) \le \max_{z}\left[ 6a-4+6z-\frac{6}{a}z^2\right] = \frac{15}{2}a-4.
\end{align*}
By using (\ref{n1lambda}) and (\ref{n2lambda}), we thus arrive at the following.%

\begin{theorem}\label{lanford_upper}
Let $K$ be a compact forward-invariant set of the system (\ref{lanford}) with $a\ge 2/3$. Then%
\begin{equation*}
  H_{\res}(\varphi,K) \le \frac{2(2a-1)}{\ln2}.%
\end{equation*}
\end{theorem}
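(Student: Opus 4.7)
The plan is to feed the three pointwise upper bounds on $\lambda_1$, $\lambda_{2,3}$ and $\lambda_1+2\lambda_{2,3}$ derived in the paragraphs preceding the statement into the metric-based estimate of Theorem~\ref{thm:main_CT}(i), and then to verify that under the hypothesis $a \ge 2/3$ the contribution of $2\max_K\lambda_{2,3}$ is the dominant one.

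Concretely, I first unpack Theorem~\ref{thm:main_CT}(i) for the metric $P$ from (\ref{lanford_P}). By Remark~\ref{remroooot}, the eigenvalues $\varsigma_i^P$ are $\lambda_1$ (simple) and $\lambda_{2,3}$ (with multiplicity two), so $\sum_{i=1}^{n}\max\{0,\varsigma_i^P(x)\}$ collapses to $\max\{0,\lambda_1\}+2\max\{0,\lambda_{2,3}\}$. The elementary bound
\begin{equation*}
\max_{K}\bigl[\max\{0,\lambda_1\}+2\max\{0,\lambda_{2,3}\}\bigr] \le \max\bigl\{0,\,\max_K\lambda_1,\,2\max_K\lambda_{2,3},\,\max_K(\lambda_1+2\lambda_{2,3})\bigr\},
\end{equation*}
obtained by distinguishing the four sign combinations of $\lambda_1,\lambda_{2,3}$ pointwise, then reduces the task to controlling each of the three candidate suprema, which is precisely what (\ref{n1lambda}), (\ref{n2lambda}) and the explicit maximization over $z$ carried out in the preceding paragraph already supply.

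Inserting the three numerical bounds $\lambda_1 \le 2a$, $2\lambda_{2,3} \le 4(2a-1)$ and $\lambda_1+2\lambda_{2,3} \le \tfrac{15}{2}a - 4$ leaves only an arithmetic comparison: $4(2a-1) \ge 2a$ is equivalent to $a \ge 2/3$, while $4(2a-1) \ge \tfrac{15}{2}a - 4$ reduces to $a/2 \ge 0$. Hence, precisely under the assumed $a \ge 2/3$, the middle candidate $2\max_K\lambda_{2,3}$ dominates, and division by $2\ln 2$ yields the claimed $\frac{2(2a-1)}{\ln 2}$. The only subtle point worth flagging is this comparison itself: the hypothesis $a \ge 2/3$ is not used in deriving any of the three pointwise bounds, but emerges exactly as the threshold at which $2\lambda_{2,3}$ overtakes $\lambda_1$ as the binding term; for smaller $a$ the same method would deliver instead the looser estimate $a/\ln 2$.
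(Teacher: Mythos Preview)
Your proposal is correct and follows exactly the approach of the paper: apply Theorem~\ref{thm:main_CT}(i) with the metric (\ref{lanford_P}), split $\max\{0,\lambda_1\}+2\max\{0,\lambda_{2,3}\}$ into the four sign cases, and bound each candidate supremum by (\ref{n1lambda}), (\ref{n2lambda}) and the displayed estimate for $\lambda_1+2\lambda_{2,3}$. You actually supply the arithmetic comparison ($8a-4\ge 2a \Leftrightarrow a\ge 2/3$ and $8a-4\ge \tfrac{15}{2}a-4 \Leftrightarrow a\ge 0$) that the paper leaves implicit, correctly identifying where the hypothesis $a\ge 2/3$ enters.
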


Our next step is to estimate $H_\res(\varphi,K)$ from below under an extra assumption about $K$. Such an estimate is given by the so-called {\it proximate topological entropy} $H_L(O)$ around the system equilibria $O$,
which is defined as $H_L(O):=\frac{1}{{\rm ln} 2} \sum_{j=1}^n \max \{ {\rm Re} \beta_j; 0\}$ \cite{PartII}. Here $\beta_1, \ldots, \beta_n$ are the eigenvalues of $\rmD \varphi(O)$ repeated in accordance with their algebraic multiplicities and ordered so that their real parts $\text{Re}\, \beta_i$ do not increase as $i$ grows. It is easy to see that now
\begin{equation*}
  H_L(O_1) = \frac{1}{\ln 2}\begin{cases}a & \text{if $0<a\le 1$}\\ 3a-2 & \text{if $a\ge 1$}\end{cases},%
\end{equation*}
\begin{equation*}
  H_L(O_2)=\frac{1}{\ln 2}\begin{cases}0 & \text{if $0<a\le 1/2$}\\ 2(2a-1) & \text{if $a\ge 1/2$}\end{cases},%
\end{equation*}
\begin{equation*}
  \max\{H_L(O_1),H_L(O_2)\}\stackrel{a\ge 2/3}{=}H_L(O_2)=\frac{2(2a-1)}{\ln 2}.%
\end{equation*}
The last relation together with \cite[Cor.~12]{PartII} and Theorem \ref{lanford_upper} gives the following result.%

\begin{theorem}
Assume that $a\ge 2/3$. Let $K$ be any compact forward-invariant set for system (\ref{lanford}), which satisfies Assumption \ref{ass.closeur} and the inclusion $O_2\in{\inner}K$. Then%
\begin{equation*}
  H_{\res}(\varphi,K) = \frac{2(2a-1)}{\ln 2}.%
\end{equation*}
\end{theorem}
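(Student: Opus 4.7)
The plan is to establish the equality by squeezing $H_{\res}(\varphi,K)$ between matching upper and lower bounds. The upper bound $H_{\res}(\varphi,K) \le \frac{2(2a-1)}{\ln 2}$ is already in hand from Theorem \ref{lanford_upper} (which made no use of the assumption $O_2\in\inner K$). So the remaining task is to establish the matching lower bound $H_{\res}(\varphi,K) \ge \frac{2(2a-1)}{\ln 2}$.

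For this, I would invoke the lower bound supplied by the proximate topological entropy at an interior equilibrium, namely \cite[Cor.~12]{PartII}, which yields $H_{\res}(\varphi,K) \ge H_L(O)$ for any equilibrium $O\in\inner K$. The assumption $O_2\in\inner K$ is precisely what allows this corollary to apply at $O_2$. Combined with Assumption~\ref{ass.closeur} (already imposed on $K$), this reduces the problem to a finite computation: determining the real parts of the eigenvalues of $\rmD\varphi(O_2)$.

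Carrying this out, evaluating the Jacobian already displayed for the Lanford system at $O_2=(0,0,a)$ gives a block-diagonal matrix whose upper $2\times 2$ block has diagonal entries $2a-1$ and off-diagonal $\pm 1$, and whose lower $1\times 1$ block is $-a$. The eigenvalues are therefore $(2a-1)\pm i$ and $-a$, with real parts $2a-1, 2a-1, -a$. Since $a\ge 2/3>1/2$, the quantity $2a-1$ is strictly positive while $-a$ is negative, so by the definition of $H_L$ one obtains $H_L(O_2)=\frac{2(2a-1)}{\ln 2}$, which matches the upper bound.

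The main obstacle is essentially none of a technical nature: the nontrivial work has already been done in proving Theorem \ref{lanford_upper} via the choice of the metric \eqref{lanford_P}, and in establishing \cite[Cor.~12]{PartII}. The only point that needs to be flagged carefully is that the lower bound via $H_L$ requires the equilibrium to lie in the \emph{interior} of $K$, which is why the hypothesis $O_2\in\inner K$ enters the statement; the equilibrium $O_1$ yields a smaller value of $H_L$ (namely $(3a-2)/\ln 2 < 2(2a-1)/\ln 2$ precisely when $a>0$, and in any case $\le 2(2a-1)/\ln 2$ for $a\ge 2/3$), so its location with respect to $K$ is irrelevant for the argument.
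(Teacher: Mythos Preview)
Your proposal is correct and follows essentially the same route as the paper: the upper bound from Theorem~\ref{lanford_upper}, the lower bound from \cite[Cor.~12]{PartII} applied at $O_2\in\inner K$, and the explicit computation of $H_L(O_2)=\tfrac{2(2a-1)}{\ln 2}$. One inessential slip in your closing aside: $H_L(O_1)=(3a-2)/\ln 2$ only for $a\ge 1$; for $2/3\le a<1$ one has $H_L(O_1)=a/\ln 2$, though either way $H_L(O_1)\le H_L(O_2)$ and the argument is unaffected.
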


At this point, it is worth mentioning that the matrix $P$ from (\ref{lanford_P}) not only provides an upper estimate of the restoration entropy according to the statement (i) of Theorem \ref{thm:main_CT}, but also gives a Riemannian metric for which the lower estimate (see the statement (ii) of Theorem \ref{thm:main_CT}) holds true with $\varepsilon=0$.%

\section{Conclusions, future work, and final remarks}\label{sec:conclusions}

In this paper, we have provided a new characterization of the restoration entropy, which measures the minimal channel capacity above which a system can be regularly or finely observed from a remote location. This characterization is based on the observation that the upper bound provided in \cite{PartII} in terms of the singular values of the system's linearization is true for any choice of the inner product with respect to which singular values are computed, even if the inner product varies from point to point, i.e., constitutes a Riemannian metric. Moreover, the corresponding metric tensor plays a role somewhat similar to the Lyapunov matrix from the direct Lyapunov method. The main results of this paper show that a proper choice of this metric leads to the exact value of the restoration entropy, more precisely, the exact value is attained as the infimum value of the above bound over all metrics. This holds for both discrete- and continuous-time systems.
\par
Moreover, our proofs explicitly disclose a minimizing sequence of metrics $P_N(x)$. This issue is discussed in more details in Appendix~\ref{app.tconcl}. However, constructive methods for building such a sequence or even computing an exact minimizer are beyond the scope of this paper, partly due to the paper length limitations; they form a topic in the schedule of our ongoing and future research.
\par
We further remark that the afore-required compactness for the set $K$ is not, in fact, essential in our results. Indeed, already in the definition of restoration entropy, it is only necessary that the image of a closed $\delta$-ball is compact. Wherever we use compactness in our proof, it can be replaced by some uniformity condition, which is another important fact that distinguishes restoration entropy from topological entropy.
\par
Finally, another possible generalization of our results concerns the abstraction of the state space from a Euclidean space to a differentiable manifold. Using the appropriate technical language, we think that our proofs carry over to this more general setup without the need of introducing new ideas.

\appendix

The remainder of the paper is devoted to the proofs of Theorem \ref{thm:main_DT} and \ref{thm:main_CT}. Section \ref{sec:prelim} summarizes some preliminaries required in the proofs of these results. The main result for the discrete-time case is proven in Section \ref{app.dtc}, while the proof for the continuous-time case is presented in Section \ref{app.ctc}. Section \ref{app.tconcl} highlights a technical conclusion from the proofs: it discusses an explicit formula for a minimizing sequence of metrics.

\section{Technical preamble to proofs of Theorem \ref{thm:main_DT} and \ref{thm:main_CT}}\label{sec:prelim}

From now on, we use the following notations: $\Gl(n,\R)$ -- group of real invertible $n \tm n$ matrices, $I$ -- $n \tm n$ identity matrix, $\|\cdot\|_2$ -- standard Euclidean norm on $\R^n$, $x_i$ -- components of $x \in \br^k$.%

\subsection{Riemannian geometry on the set $\SC^+$}

Since the set $\SC^+$ is open in the vector space $\SC$ of all symmetric $n \times n$-matrices, the first candidate for a Riemannian metric tensor on $\SC^+$ is the Euclidean metric inherited from $\SC$. However, many applications motivate the use of the so-called {\it trace metric} \cite{Bhatia07}. It varies with the point $p \in \SC^+$ and defines the following inner product
on the space $T_p\SC^+$ tangential to $\SC^+$ at $p$, which is in fact a copy of $\SC$:
\begin{equation*}
  \langle v,w \rangle_p := \tr (p^{-1} v p^{-1} w),\quad \forall v,w \in T_p\SC^+ \cong \SC.%
\end{equation*}
This metric makes $\SC^+$ a complete Riemannian manifold \cite{Chavel06}, assigns a special length to any smooth curve, and
gives birth to the Riemannian distance $d(p,q)$, defined as the minimal length of curves bridging the points $p$ and $q$ of $\SC^+$. The minimizer exists, is unique, is called a {\it geodesic}, and is given by \cite[Thm.~6.1.6]{Bhatia07}:
\begin{equation}
\label{def.geod}
 p \#_t\, q := p^{1/2}(p^{-1/2}qp^{-1/2})^t p^{1/2}, \quad t \in [0,1].
\end{equation}
Let $\alpha_1(g) \geq \ldots \geq \alpha_n(g) > 0$ be the singular values of $g \in \Gl(n,\R)$. We put%
\begin{multline}
\label{def.sigma}
  \vec{\sigma}(g) := \big[ \log \alpha_1(g),\ldots,\log \alpha_n(g) \big] \in  \fa^+
  \\
  := \{ \xi \in \br^n : \xi_1 \geq \xi_2 \geq \ldots \geq \xi_n \}%
\end{multline}
and endow $\fa^+$ with the following partial order:%
\begin{equation}
\label{def.prec}
  \xi \preceq \eta \overset{\text{def}}{\Leftrightarrow}
  \begin{cases}
                     \sum_{i=1}^k \xi_i \leq  \sum_{i=1}^k \eta_i & \forall 1 \leq k \leq n-1, \\
		\sum_{i=1}^k \xi_i =  \sum_{i=1}^k \eta_i & \mbox{for } k = n.%
																    \end{cases}%
\end{equation}
Any matrix $g \in \Gl(n,\R)$ defines an action (mapping) $g \ast$ on $\SC^+$, specifically, $p \in \SC^+ \mapsto  g \ast p :=  gpg\trn \in \SC^+$. It is easy to see that $g_1\ast(g_2\ast p) = (g_1g_2)\ast p, I\ast p = p, p_2 = g \ast p_1 \Leftrightarrow p_1 = g^{-1} \ast p_2$, and for any $p,q \in \SC^+$ there exists $g \in \Gl(n,\R)$ (e.g., $g:=q^{1/2}p^{-1/2}$) such that $q=g\ast p$.
\par
The following proposition is based on \cite{Bochi} and lists some properties of the so-called {\it vectorial distance}
\begin{equation}
\label{def.vecd}
  \vec{d}(p,q) := 2 \vec{\sigma}(p^{-1/2}q^{1/2}) \in \fa^+, \quad p,q \in \SC^+.
\end{equation}
\begin{proposition}\label{prop_vectorial_dist}
The following statements hold:
\begin{enumerate}[{\bf a)}]
\item $\vec{d}(I,p) = \vec{\sigma}(p)$ and $\|\vec{d}(p,q)\|_2 = d(p,q) \; \forall p,q \in \SC^+$;
\item $\vec{d}(p_1,q_1) = \vec{d}(p_2,q_2)$ if and only if $g \ast p_1 = p_2$ and $g \ast q_1 = q_2$ for some $g \in \Gl(n,\R)$;
\item $\vec{d}(p,p) = 0$; $\vec{d}(p,q) \preceq \vec{d}(p,r) + \vec{d}(r,q)$;
\item $\vec{d}(q,p) = i(\vec{d}(p,q))$, where
$i(\xi) := -(\xi_n,\xi_{n-1},\ldots,\xi_1)$;
\item The curve \eqref{def.geod} is a {\em geodesic segment} for the vectorial distance, i.e., there is $\xi \in \fa^+$ such that $\vec{d}(p\#_t\, q,p\#_s\, q) = (s-t)\xi$ for all $s \geq t$;
\item $\vec{d}(r \#_{1/2}\, p,r \#_{1/2}\, q) \preceq \frac{1}{2}\vec{d}(p,q)$ for all $p,q,r \in \SC^+$.
\end{enumerate}
\end{proposition}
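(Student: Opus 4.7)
The plan is to let two ingredients do most of the work. First, on $\SC^+$ the map $p \mapsto p^{1/2}$ sends singular values to square roots of eigenvalues, so $\vec{\sigma}(p^{1/2}) = \tfrac{1}{2}\vec{\sigma}(p)$ and, more usefully, a short similarity argument shows
$\alpha_i(p^{-1/2}q^{1/2})^{2} = \lambda_i(q^{1/2}p^{-1}q^{1/2}) = \lambda_i(p^{-1}q)$,
so $\vec{d}(p,q)$ is nothing but the vector of logarithms of eigenvalues of $p^{-1}q$ arranged in $\fa^+$. Second, Horn's log-majorization for singular values of a product, $\vec{\sigma}(AB) \preceq \vec{\sigma}(A)+\vec{\sigma}(B)$ (with equality in the last coordinate sum because $|\det(AB)| = |\det A||\det B|$), together with the elementary identity $\alpha_i(M^{-1}) = \alpha_{n+1-i}(M)^{-1}$, will carry the remaining symbolic load. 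A third observation used throughout is the $\Gl(n,\R)$-invariance: the identity $(g\ast p)^{-1}(g\ast q) = g^{-\trn}(p^{-1}q)g^{\trn}$ makes $p^{-1}q$ and $(g\ast p)^{-1}(g\ast q)$ similar, so by the preceding formula $\vec{d}(g\ast p,g\ast q) = \vec{d}(p,q)$. I would record this lemma first, since it collapses several later statements to a canonical configuration via the transitive action of $\Gl(n,\R)$ on $\SC^+$.

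With that preamble, items (a), (c), (d) are algebraic. For (a) the first identity is $\vec{\sigma}(p^{1/2}) = \tfrac{1}{2}\vec{\sigma}(p)$ noted above, and the norm identity follows from the well-known closed form $d(p,q)^{2} = \sum_i \bigl(\log\lambda_i(p^{-1}q)\bigr)^{2}$ of the trace-metric distance, which matches $\|\vec{d}(p,q)\|_2^2$ term by term. Part (d) is a consequence of $(p^{-1/2}q^{1/2})^{-1} = q^{-1/2}p^{1/2}$ combined with $\alpha_i(M^{-1}) = \alpha_{n+1-i}(M)^{-1}$, then taking logs. Part (c) splits into the trivial $\vec{d}(p,p)=0$ and a triangle-type majorization obtained by Horn's log-majorization applied to the factorization $p^{-1/2}q^{1/2} = (p^{-1/2}r^{1/2})(r^{-1/2}q^{1/2})$.

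For (b), the ``if'' direction is exactly the invariance established in the preamble. For ``only if'' I would reduce to $p_1 = p_2 = I$ by transitivity, using $g_i := p_i^{-1/2}$; the hypothesis then becomes that the transported matrices $q_i' := p_i^{-1/2}q_ip_i^{-1/2} \in \SC^+$ share a spectrum, so the spectral theorem delivers an orthogonal $O$ with $q_2' = Oq_1'O^{\trn}$, and $g := p_2^{1/2}Op_1^{-1/2}$ satisfies $g\ast p_1 = p_2$ and $g\ast q_1 = q_2$ by a direct check. Part (e) uses the same reduction via $g = p^{-1/2}$ to set $p = I$ and $q = R \in \SC^+$; the geodesic then becomes the commuting family $R^t$, and a direct calculation gives $\vec{d}(R^t,R^s) = 2\vec{\sigma}(R^{(s-t)/2}) = (s-t)\vec{\sigma}(R)$ for $s \geq t$, which is of the required form with $\xi = \vec{\sigma}(p^{-1/2}qp^{-1/2})$ in the general case.

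The genuine obstacle is (f), which encodes the non-positive curvature of the trace metric and is what makes the vectorial distance a substantive refinement of the scalar distance. Using (b) with $g = r^{-1/2}$, the inequality reduces to $\vec{d}(p^{1/2},q^{1/2}) \preceq \tfrac{1}{2}\vec{d}(p,q)$ after relabeling $r^{-1/2}\ast p$ and $r^{-1/2}\ast q$ as new $p,q \in \SC^+$, i.e., to the log-majorization $2\vec{\sigma}(p^{-1/4}q^{1/4}) \preceq \vec{\sigma}(p^{-1/2}q^{1/2})$. This is a classical Cordes/Ando--Hiai-type log-majorization in matrix analysis, and I would either invoke it from the matrix-analysis literature or appeal directly to \cite{Bochi}, where this semiparallelogram property is the pivotal technical input. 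With (f) in hand, the proposition is complete.
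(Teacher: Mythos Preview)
Your proposal is correct. The paper itself does not prove this proposition; it simply states that it is ``based on \cite{Bochi}'' and remarks afterwards that part~f) ``replicates formula (4--7) in \cite{Bochi}''. So there is no in-paper argument to compare against: the authors import all six items wholesale from the cited reference.

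Your approach is therefore more self-contained than the paper's. The identification $\vec{d}(p,q)_i = \log\lambda_i(p^{-1}q)$ via $\alpha_i(p^{-1/2}q^{1/2})^2 = \lambda_i(q^{1/2}p^{-1}q^{1/2})$, together with the $\Gl(n,\R)$-invariance observation $(g\ast p)^{-1}(g\ast q) = g^{-\trn}(p^{-1}q)g^{\trn}$, is exactly the right scaffolding, and it dispatches a)--e) cleanly. Two small remarks: in e) and f) you implicitly use the equivariance $g\ast(p\#_t q) = (g\ast p)\#_t(g\ast q)$, which in the paper appears only \emph{after} this proposition (as \eqref{geod.1}); but since that identity follows directly from the fact that $g\ast$ is a Riemannian isometry, there is no circularity. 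For f), your reduction to $2\vec{\sigma}(p^{-1/4}q^{1/4}) \preceq \vec{\sigma}(p^{-1/2}q^{1/2})$ is correct; this is Araki's log-majorization (with $A = p^{-1/2}$, $B = q^{1/2}$, exponent $1/2$), so citing either Araki or \cite{Bochi} at that point is legitimate. What your route buys over the paper's bare citation is an explicit decomposition showing that a)--e) are elementary matrix algebra plus Horn, while only f) requires the deeper nonpositive-curvature input.
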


Here b) shows that the map $g\ast$ is an isometry (preserves distances); c) and d) are akin to the axioms of the metric, and f) replicates formula (4-7)
in \cite{Bochi}.
\begin{proposition}\label{prop_geodesics}
The following holds for geodesics on $\SC^+$:
\begin{gather}
\label{geod.1}
g \ast (p \#_t\, q) = (g \ast p) \#_t\, (g \ast q),
\\
\label{geod.2}
\vec{d}(p \#_t\, q,r \#_t\, o) \preceq (1-t)\vec{d}(p,r) + t\vec{d}(q,o) \; \forall t \in [0,1].
\end{gather}
\end{proposition}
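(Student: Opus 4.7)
The plan is to handle \eqref{geod.1} via the observation that the congruence action $g\ast$ is a Riemannian isometry of $\SC^+$, and to handle \eqref{geod.2} via the standard midpoint-convexity-plus-dyadic-density argument on the vectorial distance.

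For \eqref{geod.1}, I would first verify by a direct trace computation that the trace inner product $\langle v,w\rangle_p = \tr(p^{-1}vp^{-1}w)$ satisfies $\langle g\ast v,\, g\ast w\rangle_{g\ast p} = \langle v,w\rangle_p$ for all $v,w \in \SC$, which makes $g\ast \colon \SC^+ \to \SC^+$ a Riemannian isometry. Since geodesics in the trace metric are uniquely determined by their endpoints (as recorded in the discussion preceding \eqref{def.geod}) and isometries carry constant-speed geodesic segments to constant-speed geodesic segments with the corresponding endpoint images, the curve $t \mapsto g\ast(p\#_t q)$ must coincide with $t \mapsto (g\ast p)\#_t(g\ast q)$.

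For \eqref{geod.2}, I would proceed in three steps. First, the midpoint case $t = 1/2$: inserting the auxiliary point $s := p\#_{1/2} o$ and invoking the triangle inequality c) of Proposition~\ref{prop_vectorial_dist} gives
\[
\vec{d}(p\#_{1/2}q,\, r\#_{1/2}o) \preceq \vec{d}(p\#_{1/2}q,\, s) + \vec{d}(s,\, r\#_{1/2}o).
\]
Property f) with common left endpoint $p$ controls the first summand by $\tfrac12 \vec{d}(q,o)$; for the second, the symmetries $p\#_{1/2}o = o\#_{1/2}p$ and $r\#_{1/2}o = o\#_{1/2}r$ let me apply f) with common left endpoint $o$, yielding $\tfrac12 \vec{d}(p,r)$. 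Summing gives the midpoint case. Second, for a general dyadic $t = k/2^N$, I would induct on $N$, writing $t = (s+u)/2$ with $s,u$ dyadic of strictly smaller denominator. Property e) of Proposition~\ref{prop_vectorial_dist}, applied to the geodesic $p\#_\bullet q$, furnishes the reparametrization identity $p\#_t q = (p\#_s q)\#_{1/2}(p\#_u q)$; combined with the midpoint inequality just proved and the induction hypotheses at $s$ and $u$, this gives \eqref{geod.2} at $t$. Finally, continuity of $t\mapsto p\#_t q$ and of $\vec{d}$, together with the fact that $\preceq$ is a closed condition (defined by weak inequalities between partial sums), extends the inequality to all $t \in [0,1]$ by density of dyadic rationals.

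The main obstacle I anticipate is the careful bookkeeping for the partial order $\preceq$ on $\fa^+$: one has to check that intermediate sums remain in $\fa^+$ (so $\preceq$ is meaningful), that $\preceq$ is compatible with addition and with nonnegative scalar multiplication, and that it survives passage to limits. Each of these is routine once phrased in terms of the partial sums defining $\preceq$, but none is automatic, and the argument would be incorrect if $\preceq$ were confused with the coordinatewise order. With that bookkeeping in place, everything else reduces to the packaged properties in Proposition~\ref{prop_vectorial_dist}.
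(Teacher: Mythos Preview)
Your proposal is correct and follows essentially the same route as the paper: \eqref{geod.1} via the isometry property of $g\ast$, and \eqref{geod.2} via the midpoint case (triangle inequality c) through the auxiliary point $p\#_{1/2}o$, then f) twice using the midpoint symmetry), followed by dyadic induction and a continuity passage. The only cosmetic difference is that the paper leaves the induction step implicit, whereas you spell out the reparametrization identity $p\#_t q = (p\#_s q)\#_{1/2}(p\#_u q)$; note that this identity is really a consequence of the uniqueness of geodesics rather than of property e) per se, but this does not affect correctness.
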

\begin{proof}
Here \eqref{geod.1} holds since the map $g \ast$ is an isometry of the Riemannian space $\SC^+$. To prove \eqref{geod.2}, we
use c) and f) in Proposition \ref{prop_vectorial_dist} and the symmetry $p\#_{\frac{1}{2}} q = q\#_{\frac{1}{2}} p$ \cite[Sec.~4.1, 6.1.7]{Bhatia07} and see that%
\begin{align*}
  \vec{d}(p \#_{1/2}\, q,r \#_{1/2}\, o) &\preceq \vec{d}( p \#_{1/2}\, q, p \#_{1/2}\, o)  + \vec{d}(p \#_{1/2}\, o,r \#_{1/2}\, o) \\
  &= \vec{d}( p \#_{1/2}\, q, p \#_{1/2}\, o)  + \vec{d}(o \#_{1/2}\, p,o \#_{1/2}\, r) \\
	&\preceq \frac{1}{2}\vec{d}(q,o) + \frac{1}{2}\vec{d}(p,r),%
\end{align*}
\eqref{geod.2} with $t = 1/2$. For all numbers $t$ of the form $t = k/2^n$ with integers $k,n$ such that $0 \leq k \leq 2^n$, \eqref{geod.2} follows by induction. Since these numbers are dense in $[0,1]$ and \eqref{def.geod} is continuous in $t$, \eqref{geod.2} extends to the entirety of $[0,1]$ by continuity.
\end{proof}

The point $p \#_{1/2}\, q$ is in fact the barycenter of the two-point set $\{p,q\}$, i.e., the minimizer $r$ of $d(r,p)^2 + d(r,q)^2$. This notion of barycenter has a far-reaching generalization. Specifically, let%
\begin{equation*}
  \Delta_m := \{ \omega \in \R^m : \omega_i \geq 0,\ \sum \omega_i = 1\}%
\end{equation*}
be the standard simplex in $\R^m$, let $\omega \in \Delta_m$, and let $p_1,\ldots,p_m \in \SC^+$. Then the following point is well-defined (see, e.g., \cite{LPa}):%
\begin{equation*}
  \bary(\omega;p_1,\ldots,p_m) := \argmin_{q \in \SC^+} \sum_{i=1}^m \omega_i d(q,p_i)^2
\end{equation*}
and is called a \emph{weighted barycenter} of $p_1,\ldots,p_m$ (also known under other names). The {\it unweighted} barycenter is given by
\begin{equation*}
  \bary(p_1,\ldots,p_m) := \bary( \omega^{=}; p_1,\ldots,p_m), \; \text{where} \; \omega_i^{=} := m^{-1} \; \forall i.
\end{equation*}
There is no closed-form expression for $\bary(\omega;p_1,\ldots,p_m)$, in general. The following theorem from \cite{LPa} (see \cite{Hol} for the unweighted case) characterizes the barycenter as the limit of a certain iterative process, which acts through a binary operation at each step $k=1,2,\ldots$ Specifically, let $i (\!\!\!\!\mod m)$ stand for the remainder after dividing $i$ by $m$. We put $l(k) := \sum_{i=1}^k \omega_{i (\!\!\!\!\mod m)}, s_k := \omega_{k (\!\!\!\!\mod m)}/l(k)$ and consider the following process:%
\begin{equation*}
  \bar{p}_1 := p_1,\qquad \bar{p}_k := \bar{p}_{k-1} \#_{s_k}\, p_{k\ (\!\!\!\!\!\!\mod m)}.
\end{equation*}

\begin{theorem}[\cite{Hol,LPa}]\label{thm_iterative}
$\bary(\omega;p_1,\ldots,p_m) = \lim_{k \rightarrow \infty}\bar{p}_k$.%
\end{theorem}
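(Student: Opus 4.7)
My plan is to view $\SC^+$, endowed with the trace metric, as a complete CAT(0) (Hadamard) space and to interpret the iteration $(\bar p_k)$ as a stochastic-approximation scheme that cyclically samples the atoms $p_1,\ldots,p_m$. The ambient nonpositive curvature is already encoded in \eqref{geod.2} and item~f) of Proposition~\ref{prop_vectorial_dist}; its scalar-length version is the quadruple inequality
\begin{equation}\label{eq:npc-sketch}
d(r,\,p\#_t\, q)^2 \;\leq\; (1-t)\,d(r,p)^2 + t\,d(r,q)^2 - t(1-t)\,d(p,q)^2
\end{equation}
for all $r,p,q\in\SC^+$ and $t\in[0,1]$. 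On any such space the weighted barycenter $p^\ast:=\bary(\omega;p_1,\ldots,p_m)$ exists, is unique, and is characterized by the variance inequality
\begin{equation}\label{eq:var-sketch}
\sum_{i=1}^m \omega_i\, d(q,p_i)^2 \;\geq\; \sum_{i=1}^m \omega_i\, d(p^\ast,p_i)^2 + d(q,p^\ast)^2
\qquad \forall q\in\SC^+ .
\end{equation}

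The first step is to apply \eqref{eq:npc-sketch} with $r=p^\ast$, $p=\bar p_{k-1}$, $q=p_{k\,\bmod\,m}$ and $t=s_k$, which yields the per-step recursion
\begin{align*}
V_k &\leq (1-s_k)\,V_{k-1} + s_k\, d(p^\ast,p_{k\,\bmod\,m})^2 \\
&\qquad - s_k(1-s_k)\,d(\bar p_{k-1},p_{k\,\bmod\,m})^2,
\end{align*}
with $V_k:=d(p^\ast,\bar p_k)^2$. Since $l(k)=\sum_{i\leq k}\omega_{i\,\bmod\,m}\sim k/m$, the step sizes obey $s_k=\Theta(1/k)$, so $\sum s_k=\infty$ and $\sum s_k^2<\infty$, and the iterates remain in a bounded subset of $\SC^+$ because each step $r\mapsto r\#_{s_k}\,p_{k\,\bmod\,m}$ is nonexpansive relative to $p^\ast$.

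I would then pass from this per-step bound to a per-cycle bound by summing the recursion over $m$ consecutive indices $k_0,k_0+1,\ldots,k_0+m-1$, during which every atom $p_i$ is visited exactly once with cumulative weight $\approx\omega_i/l(k_0)$. Controlling the $O(1/k^2)$ drift of $\bar p_\cdot$ inside a block, and comparing the positive ``reward'' term $\sum_i\omega_i\,d(p^\ast,p_i)^2$ with the negative ``variance'' term $\sum_i\omega_i\,d(\bar p_{k_0-1},p_i)^2$ through \eqref{eq:var-sketch}, these contributions cancel modulo $d(\bar p_{k_0-1},p^\ast)^2/l(k_0)$. The resulting Lyapunov recursion has the form
\[
V_{k_0+m-1} \;\leq\; \Big(1 - \tfrac{1}{l(k_0)}\Big)\,V_{k_0-1} + \varepsilon_{k_0}, \qquad \sum_{k_0}\varepsilon_{k_0}<\infty,
\]
and since $\sum_{k_0} 1/l(k_0)=\infty$, the Robbins--Siegmund lemma (or a direct telescoping estimate) forces $V_k\to 0$, which is exactly the claim.

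The main obstacle, as I anticipate it, is precisely this bridge between the per-step inequality and the per-cycle conclusion: the variance inequality \eqref{eq:var-sketch} is a statement about the full weighted empirical measure $\sum_i\omega_i\delta_{p_i}$, while the iteration touches only a single atom at each step. Making the heuristic ``$\bar p_{k_0+j-1}\approx \bar p_{k_0-1}$ throughout one cycle'' quantitatively rigorous requires a drift estimate whose error is summable, which is exactly what $\sum s_k^2<\infty$ provides. A subsidiary technical point is the verification of \eqref{eq:var-sketch} itself; on the Hadamard manifold $\SC^+$ this follows by a standard first-variation argument applied to the strictly convex functional $q\mapsto\sum_i\omega_i\,d(q,p_i)^2$, again invoking \eqref{eq:npc-sketch}.
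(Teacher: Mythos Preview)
The paper does not prove Theorem~\ref{thm_iterative}; it merely imports the result from \cite{Hol,LPa} and uses it as a black box (only to derive Proposition~\ref{prop_barycenter}). So there is no ``paper's own proof'' to compare your proposal against.

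That said, your sketch is aligned with the actual arguments in the cited sources: $\SC^+$ with the trace metric is a Hadamard space, the semiparallelogram law \eqref{eq:npc-sketch} holds, the variance inequality \eqref{eq:var-sketch} characterizes the unique barycenter, and the step sizes $s_k$ satisfy $\sum s_k=\infty$, $\sum s_k^2<\infty$. The Robbins--Siegmund/stochastic-approximation viewpoint you adopt is essentially how Lim and P\'alfia organize the convergence proof in the weighted case. The one place where your outline is still genuinely soft is the per-cycle aggregation: you need a quantitative bound of the form $d(\bar p_{k_0+j},\bar p_{k_0-1})=O(1/k_0)$ uniformly over $j=0,\ldots,m-1$, so that replacing $\bar p_{k_0+j-1}$ by $\bar p_{k_0-1}$ in the block-summed variance term produces an error that is $O(1/k_0^2)$ and hence summable. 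This follows from $d(\bar p_k,\bar p_{k-1})\leq s_k\,d(\bar p_{k-1},p_{k\,\bmod\,m})$ together with the boundedness of the iterates, but it should be stated and used explicitly rather than left as ``controlling the $O(1/k^2)$ drift''. Once that is in place, your Lyapunov recursion and the conclusion $V_k\to 0$ go through.
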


Now we list some properties of the weighted barycenter.%

\begin{proposition}\label{prop_barycenter}
The following relations hold:%
\begin{gather}
g \ast \bary(\omega;p_1,\ldots,p_m) = \bary(\omega;g\ast p_1,\ldots,g\ast p_m) \qquad \forall g \in \Gl(n,\R);
\label{bar.1}
\\
\label{bar.2}
\vec{d}(u,v) \preceq \omega_m \vec{d}(p_m,p_m') \quad \forall p_1,\ldots, p_m,p_m' \in \SC^+,
\\
\nonumber
\text{\rm where} \quad
\begin{cases}
 u =\bary(\omega;p_1,\ldots,p_{m-1},p_m),\\
  v =\bary(\omega;p_1,\ldots,p_{m-1},p_m');
\end{cases}
\\
\bary(\omega;p_1,\ldots,p_m) = \bary(\omega_{\sigma};p_{\sigma(1)},\ldots,p_{\sigma(m)})
\label{bar.3}
\end{gather}
for any permutation $\sigma$ of $\{1,\ldots,m\}$, where $\omega_{\sigma}$ is the result $(\omega_{\sigma(1)},\ldots,\omega_{\sigma(m)})$ of its action on $\omega$.
\end{proposition}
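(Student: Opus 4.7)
The plan is to dispatch the two easy identities first and then focus on the Lipschitz-type bound \eqref{bar.2}, which is the only substantive claim. For \eqref{bar.1}, I use that Proposition \ref{prop_vectorial_dist}(b) makes $g\ast$ a Riemannian isometry of $\SC^+$, so the substitution $q\mapsto g\ast q$ carries the functional $\sum_i\omega_i\,d(q,p_i)^2$ into $\sum_i\omega_i\,d(g\ast q,g\ast p_i)^2$; uniqueness of the minimizer then forces \eqref{bar.1}. The permutation claim \eqref{bar.3} is immediate: the functional being minimized depends only on the multiset $\{(\omega_i,p_i)\}_{i=1}^m$, so any simultaneous reindexing of weights and points leaves it unchanged.

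The substantive estimate \eqref{bar.2} is where I would invest effort. My approach is to realize $u$ and $v$ as limits of the iterative process of Theorem \ref{thm_iterative}: let $\bar p_k$ and $\bar p'_k$ be the sequences built from $(p_1,\ldots,p_{m-1},p_m)$ and $(p_1,\ldots,p_{m-1},p'_m)$, respectively, using the common weight schedule $s_k=\omega_{k\bmod m}/l(k)$, so that $\bar p_k\to u$ and $\bar p'_k\to v$. At each step $k$ with $k\bmod m\neq m$ the two sequences take a geodesic of the same fraction $s_k$ toward the common point $p_{k\bmod m}$, while at steps $k\in m\N$ the targets are $p_m$ and $p'_m$, respectively. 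Applying \eqref{geod.2} and exploiting $\vec d(p_j,p_j)=0$ on the coinciding steps yields
\begin{equation*}
\vec d(\bar p_k,\bar p'_k)\preceq(1-s_k)\,\vec d(\bar p_{k-1},\bar p'_{k-1})+s_k\chi_k\,\vec d(p_m,p'_m),
\end{equation*}
where $\chi_k=1$ if $k\in m\N$ and $\chi_k=0$ otherwise.

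The decisive algebraic observation is that $l(k)=l(k-1)+\omega_{k\bmod m}$ gives $1-s_k=l(k-1)/l(k)$, so the products $\prod_{i=j+1}^k(1-s_i)$ collapse telescopically to $l(j)/l(k)$. Iterating the above recursion from $\vec d(\bar p_1,\bar p'_1)=0$ therefore yields
\begin{equation*}
\vec d(\bar p_k,\bar p'_k)\preceq\sum_{\substack{j\le k\\ j\in m\N}}\frac{\omega_m}{l(j)}\cdot\frac{l(j)}{l(k)}\,\vec d(p_m,p'_m)=\frac{\omega_m\lfloor k/m\rfloor}{l(k)}\,\vec d(p_m,p'_m),
\end{equation*}
and since $l(k)=\lfloor k/m\rfloor+O(1)$ the prefactor tends to $\omega_m$ as $k\to\infty$. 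Passing to the limit (using continuity of $\vec d$ and preservation of $\preceq$ under coordinatewise limits) then delivers \eqref{bar.2}.

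The main obstacle, though mild, is to verify that the partial order $\preceq$ on $\fa^+$ behaves well under the arithmetic performed above: closure under sums of vectors that remain in $\fa^+$, under multiplication by nonnegative scalars, and under passage to limits. All three follow directly from the partial-sum definition \eqref{def.prec}; the equality at $k=n$, which ultimately reflects the identity $\log\det(p\#_t q)=(1-t)\log\det p+t\log\det q$ that is visible in \eqref{def.geod}, is respected at every iteration, so no subtle obstruction arises.
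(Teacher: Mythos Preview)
Your proposal is correct and follows essentially the same approach as the paper: both dispatch \eqref{bar.1} and \eqref{bar.3} from the isometry property and the definition of the barycenter, and both prove \eqref{bar.2} by running the two iterative sequences of Theorem~\ref{thm_iterative} in parallel, applying \eqref{geod.2} at each step, and letting $k\to\infty$. The only cosmetic difference is that the paper groups the iteration into blocks of $m$ steps and computes $\prod_{i=0}^{m-1}(1-s_{km-i})=(k-1)/k$ directly, whereas you use the telescoping identity $1-s_k=l(k-1)/l(k)$ to collapse the products; these are equivalent bookkeeping choices for the same argument.
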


\begin{proof}
Here \eqref{bar.1} and \eqref{bar.3} are straightforward from the definition of the barycenter and the fact that $g\ast$ is an isometry. To prove \eqref{bar.2}, we consider the sequences $(\bar{p}_k)_{k\in\N}$ and $(\bar{q}_k)_{k\in\N}$ defined as in Theorem \ref{thm_iterative} from the data $(\omega;p_1,\ldots,p_{m-1},p_m)$ and $(\omega;p_1,\ldots,p_{m-1},p_m')$, respectively. By using \eqref{geod.2}, we obtain%
\begin{align*}
 & \vec{d}(\bar{p}_{km},\bar{q}_{km}) = \vec{d}(\bar{p}_{km-1} \#_{s_{km}}\, p_m,\bar{q}_{km-1} \#_{s_{km}}\, p_m') \\
	&\preceq (1 - s_{km}) \vec{d}(\bar{p}_{km-1},\bar{q}_{km-1}) + s_{km} \vec{d}(p_m,p_m') \\
	&= (1 - s_{km}) \vec{d}(\bar{p}_{km-2} \#_{s_{km-1}}\, p_{m-1},\bar{q}_{km-2} \#_{s_{km-1}}\, p_{m-1}) \\  &+ s_{km} \vec{d}(p_m,p_m')
	\preceq (1 - s_{km}) \Bigl[ (1 - s_{km-1}) \vec{d}(\bar{p}_{km-2},\bar{q}_{km-2}) \\  &+ s_{km-1} \underbrace{ \vec{d}(p_{m-1},p_{m-1})}_{=0}\Bigr]
	+ s_{km}\vec{d}(p_m,p_m') \\
	&= (1 - s_{km})(1 - s_{km-1}) \vec{d}(\bar{p}_{km-2},\bar{q}_{km-2}) + s_{km}\vec{d}(p_m,p_m') \\
	&\preceq \ldots \\
  &\preceq \Bigl[\prod_{i=0}^{m-1} (1 - s_{km-i})\Bigr] \vec{d}(\bar{p}_{(k-1)m},\bar{q}_{(k-1)m}) + s_{km}\vec{d}(p_m,p_m').%
\end{align*}
Now we observe that $s_{km} = \omega_m/k$ and%
\begin{align*}
  &\prod_{i=0}^{m-1} (1 - s_{km-i}) = \prod_{i=0}^{m-1} \Bigl(1 - \frac{\omega_{m-i}}{k-1 + \sum_{j=1}^{m-i} \omega_j}\Bigr) \\
	&= \prod_{i=0}^{m-1} \frac{k-1 + \sum_{j=1}^{m-i}\omega_j - \omega_{m-i}}{k-1 + \sum_{j=1}^{m-i} \omega_j} = \frac{k-1}{k}.%
\end{align*}
Hence, we end up with
\begin{align*}
  \vec{d}(\bar{p}_{km},\bar{q}_{km}) \preceq \frac{k-1}{k}\vec{d}(\bar{p}_{(k-1)m},\bar{q}_{(k-1)m}) + \frac{\omega_m}{k}\vec{d}(p_m,p_m').%
\end{align*}
Iterating this estimate all the way down to $k=1$, we find that%
\begin{multline*}
  \vec{d}(\bar{p}_{km},\bar{q}_{km}) \preceq \Bigl(\frac{\omega_m}{k} + \frac{k-1}{k}\frac{\omega_m}{k-1} + \frac{k-1}{k}\frac{k-2}{k-1}\frac{\omega_m}{k-2} + \ldots \\
	 + \prod_{i=1}^k \frac{k-i}{k-i+1}\omega_m\Bigr) \vec{d}(p_m,p_m') = \omega_m \vec{d}(p_m,p_m').%
\end{multline*}
Letting  $k \rightarrow \infty$ with regard to Theorem \ref{thm_iterative} completes the proof.%
\end{proof}

The set of $m$ points $p_i \in \SC^+$ and the tuple of weights $\omega \in \Delta_m$ give birth to a probability measure on $\SC^+$: this is the convex combination $\mu$ of $m$ Dirac measures $\delta_{p_i}$ with the coefficients $\omega_i$.
The weighted barycenter is the minimizer $p$ of the function%
\begin{equation}\label{funct.min}
  p \mapsto \int_{\SC^+} d(p,q)^2 \di \mu(q).%
\end{equation}
This observation permits to extend the notion of the barycenter to any probability measure $\mu$ on the Borel $\sigma$-algebra of $\SC^+$.%

More rigorously, let $\PC^1(\SC^+)$ be the set of all Borel probability measures on $\SC^+$ with finite first moment, i.e.,%
\begin{equation*}
\PC^1(\SC^+) := \left\{ \mu:  \int_{\SC^+} d(I,p)\,  \di \mu(p) < \infty \right\}.%
\end{equation*}
This set can be equipped with the \emph{$1$-Wasserstein metric} \cite{Vil}:
\begin{equation*}
  W_1(\mu,\nu) := \inf_{P \in (\mu|\nu)}\int_{\SC^+ \tm \SC^+} d(p,q)\, \di P(p,q),%
\end{equation*}
where $(\mu|\nu)$ is the set of all probability measures $P$ on $\SC^+ \tm \SC^+$ whose projection along the first and second coordinate coincides with $\mu$ and $\nu$, respectively. Simultaneously \cite[Rem.~6.5]{Vil},
\begin{equation}
\label{weiss.am}
  W_1(\mu,\nu) = \sup_{\|\psi\|_{\mathrm{Lip}} \leq 1}\Bigl[\int_{\SC^+} \psi\, \di \mu - \int_{\SC^+} \psi\, \di \nu\Bigr],
\end{equation}%
where $\|\psi\|_{\mathrm{Lip}}:= \sup_{p_1 \neq p_2 , p_i\in \SC^+} |\psi(p_1) - \psi(p_2)| / d(p_1,p_2) $.
\par
Remark 4.2 and Lemma 5.1 in \cite{Stu} give rise to the following.

\begin{lemma}\label{len.appr}
{\bf (i)} The sums of Dirac measures with identical weights $\frac{1}{m}(\delta_{p_1} + \ldots + \delta_{p_m})$ form, in total, a set that is dense in $\PC^1(\SC^+)$.
{\bf (ii)} For any isometry $\mathscr{I}:\SC^+ \rightarrow \SC^+$, the associated push-forward transformation of probability measures $\mu \mapsto \nu, \nu(E) := \mu(\mathscr{I}^{-1}E)$ isometrically maps $\PC^1(\SC^+)$ into itself.
\end{lemma}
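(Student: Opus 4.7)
\textbf{Plan for Lemma \ref{len.appr}.} The two claims are standard in optimal-transport folklore on Polish spaces, so the plan is to assemble the ingredients rather than prove them ex nihilo; since the lemma is explicitly cited to \cite{Stu}, the job is essentially to check that the ambient manifold $(\SC^+,d)$ satisfies the hypotheses used there and to spell out the two claims in our notation.

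\textbf{Part (i): density of equally weighted empirical measures.} The plan is a three-step approximation. First, a \emph{truncation step}: given $\mu \in \PC^1(\SC^+)$ and $\varepsilon>0$, choose $R>0$ so large that $\int_{d(I,p)>R} d(I,p)\, \di \mu(p)<\varepsilon$, which is possible because $\mu$ has finite first moment. Replacing the mass on $\{d(I,p)>R\}$ by a single atom at $I$ produces a compactly supported measure $\mu_R$ whose $W_1$-distance from $\mu$ is controlled by that tail integral (use the obvious coupling that fixes $\{d(I,p)\le R\}$ and ships everything outside to $I$). Second, a \emph{quantization step}: cover the compact support by finitely many Borel cells $E_1,\dots,E_k$ of diameter $<\varepsilon$, pick $p_i \in E_i$ and form the finitely supported measure $\sum \mu_R(E_i)\delta_{p_i}$; its $W_1$-distance from $\mu_R$ is $\le \varepsilon$ by transporting mass inside each cell. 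Third, a \emph{rationalization step}: approximate the weights $\mu_R(E_i)$ by rationals $k_i/N$ with common denominator $N$, which affects the measure by a total variation of order $1/N$, and rewrite $\sum (k_i/N)\delta_{p_i}=\tfrac{1}{N}\sum_{j=1}^N \delta_{p_{i(j)}}$, repeating each $p_i$ with multiplicity $k_i$. Sending $\varepsilon\to 0$ and $N\to\infty$ gives (i). The subtle point is verifying that the space $(\SC^+,d)$ is Polish so that $\PC^1(\SC^+)$ with $W_1$ is the right completion, and that bounded-support approximation in $W_1$ really is controlled by first moment — both are textbook facts once completeness (Riemannian) and separability are in hand.

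\textbf{Part (ii): isometric invariance of $W_1$ under push-forward.} I plan to use the Kantorovich–Rubinstein duality \eqref{weiss.am}. First check that push-forward stays in $\PC^1(\SC^+)$: for $\mu'=\mathscr{I}_\#\mu$, a change of variables gives $\int d(I,p)\, \di \mu'(p)=\int d(I,\mathscr{I}(q))\, \di \mu(q)\le d(I,\mathscr{I}^{-1}(I))+\int d(I,q)\, \di \mu(q)$ by the triangle inequality and the isometry property, hence finite. Second, for any 1-Lipschitz test function $\psi$, the composition $\psi\circ\mathscr{I}$ is also 1-Lipschitz because $\mathscr{I}$ preserves distances; and the push-forward identity gives $\int \psi\, \di \mathscr{I}_\#\mu =\int \psi\circ\mathscr{I}\, \di \mu$. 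Applying the same to $\nu$ and taking the supremum over $\|\psi\|_{\mathrm{Lip}}\le 1$, I get
\begin{equation*}
W_1(\mathscr{I}_\#\mu,\mathscr{I}_\#\nu)=\sup_{\|\psi\|_{\mathrm{Lip}}\le 1}\Bigl[\int \psi\circ\mathscr{I}\, \di \mu - \int \psi\circ\mathscr{I}\, \di \nu\Bigr] = W_1(\mu,\nu),
\end{equation*}
the last equality because the map $\psi\mapsto \psi\circ\mathscr{I}$ is a bijection of the unit ball of Lipschitz functions (its inverse is composition with $\mathscr{I}^{-1}$, also an isometry).

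\textbf{Anticipated obstacle.} Neither claim should pose conceptual difficulties because the heavy lifting is in \cite{Stu}. The one thing worth being careful about is the truncation in (i): unlike Euclidean space, the trace metric on $\SC^+$ is not proper in the ambient linear structure of $\SC$, so ``large radius'' must refer to $d$-balls inside $\SC^+$. Completeness of $(\SC^+,d)$ (already mentioned after the definition of the trace metric) ensures these balls are compact by Hopf–Rinow, which is exactly what the truncation + quantization argument needs. Once this is noted, both parts are routine applications of the references.
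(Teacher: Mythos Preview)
Your proposal is mathematically sound, but you should be aware that the paper does not actually prove this lemma: the sentence immediately preceding it reads ``Remark 4.2 and Lemma 5.1 in \cite{Stu} give rise to the following'', and that is the entirety of the paper's justification. So you are not reproducing the paper's argument but rather supplying one where the paper simply defers to Sturm.

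Substantively, your three-step approximation for (i) (truncation to a compact $d$-ball, quantization by a finite cover, rationalization of weights) is the standard route and is correct; your identification of Hopf--Rinow as the reason closed $d$-balls in $(\SC^+,d)$ are compact is exactly the point that needs checking. For (ii), your duality argument is fine, with one small caveat: the line ``its inverse is composition with $\mathscr{I}^{-1}$'' tacitly assumes $\mathscr{I}$ is surjective. In the paper's applications the isometries are of the form $p\mapsto g\ast p$ with $g\in\Gl(n,\R)$, which are bijections, so this is harmless in context; if you wanted to cover distance-preserving maps that are not a priori onto, you could instead appeal to McShane extension of $1$-Lipschitz functions from $\mathscr{I}(\SC^+)$ to all of $\SC^+$ to get the reverse inequality. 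Either way your argument closes, and it is more informative than the bare citation the paper gives.
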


The next proposition asserts existence of the measure barycenter and reports some its properties.%

\begin{proposition}[Lem.~5.1, Thm.~6.3, Prop.~4.3 in \cite{Stu}]\label{prop_barycentermap}
There exists a map $\bary:\PC^1(\SC^+) \rightarrow \SC^+$ with the following properties:%
\begin{enumerate}
\item[(1)] Whenever an isometry $\mathscr{I}:\SC^+ \rightarrow \SC^+$ pushes a probability measure $\mu$ forward into the measure $\nu$,
we have $\bary[\nu] = \mathscr{I}(\bary[\mu])$;%
\item[(2)] The map $\bary[\cdot]$ is $1$-Lipschitz:%
\begin{equation}
\label{lipsch.bary}
  d(\bary[\mu],\bary[\nu]) \leq W_1(\mu,\nu) \quad \forall \mu,\nu \in \PC^1(\SC^+);
\end{equation}
\item[(3)] The minimum of \eqref{funct.min} is attained at $\bary[\mu]$ provided that the right-hand side is well-defined, i.e., $\int_{\SC^+} d(I,q)^2\, \di \mu(q) < \infty$.
\end{enumerate}
\end{proposition}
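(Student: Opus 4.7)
The plan is to leverage the fact that $(\SC^+, d)$ with the trace metric is a Hadamard manifold: it is simply connected, geodesically complete, and nonpositively curved. The explicit geodesics \eqref{def.geod} together with the isometric action of $\Gl(n,\R)$ make all these properties verifiable by direct computation, and in particular one has the CAT(0) inequality
\begin{equation*}
d(z, p \#_t\, q)^2 \leq (1-t)d(z,p)^2 + t\, d(z,q)^2 - t(1-t)d(p,q)^2
\end{equation*}
for every $z,p,q \in \SC^+$ and $t \in [0,1]$. This inequality is the single analytic ingredient driving the whole construction.

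First I would define $\bary[\mu]$ for measures $\mu$ with finite second moment, i.e.\ $\int d(I,q)^2 \, \di\mu(q) < \infty$, as the unique minimizer of the variance functional $F_\mu(p) := \int d(p,q)^2 \, \di\mu(q)$. Coercivity of $F_\mu$ and completeness of $\SC^+$ give existence; the CAT(0) inequality, integrated in $q$ against $\mu$, shows that $F_\mu$ is strictly convex along geodesics and hence has a unique minimizer. For $\mu = \frac{1}{m}\sum_{i=1}^m \delta_{p_i}$ this recovers the unweighted barycenter of Theorem \ref{thm_iterative}, by Proposition \ref{prop_barycenter}\eqref{bar.3} and the construction there; so on this dense subclass the new definition agrees with the elementary one and \eqref{bar.1} automatically extends to arbitrary isometries of $\SC^+$ via Lemma \ref{len.appr}(ii).

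Next I would prove the Lipschitz estimate \eqref{lipsch.bary} on finitely supported uniformly weighted measures. Given $\mu = \frac{1}{m}\sum \delta_{p_i}$ and $\nu = \frac{1}{m}\sum \delta_{q_j}$, an optimal coupling for $W_1(\mu,\nu)$ yields a bistochastic matrix $\pi$ with $W_1(\mu,\nu) = \sum \pi_{ij}\, d(p_i,q_j)$. After refining the supports by atom-splitting (with weights read off from $\pi$), $\bary[\mu]$ and $\bary[\nu]$ can be computed from two lists of equal length in matched order; repeated application of Proposition \ref{prop_barycenter}\eqref{bar.2}, combined with the permutation-invariance \eqref{bar.3} to rotate through the atoms, gives $d(\bary[\mu],\bary[\nu]) \leq \sum \pi_{ij}\, d(p_i,q_j) = W_1(\mu,\nu)$. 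Using Lemma \ref{len.appr}(i) the map $\bary$ then extends uniquely by continuity to a $1$-Lipschitz map on the whole $\PC^1(\SC^+)$; completeness of $\SC^+$ makes the extension well defined, and \eqref{lipsch.bary} passes to the limit. Property (1) propagates through the limit via Lemma \ref{len.appr}(ii), and (3) is then automatic since on measures with finite second moment $\bary[\mu]$ was defined as the minimizer of $F_\mu$.

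The step I expect to be the real obstacle is establishing $d(\bary[\mu],\bary[\nu]) \leq W_1(\mu,\nu)$ rather than the softer $W_2$ bound that drops out of the CAT(0) variance inequality almost for free. The atom-splitting reduction makes the $W_1$ bound plausible, but one has to check carefully that the inductive use of Proposition \ref{prop_barycenter}\eqref{bar.2} really produces a telescoping sum in $d(p_i,q_j)$ with coefficients $\pi_{ij}$, i.e.\ that the unweighted barycenter is still $1$-Lipschitz when the changes are spread simultaneously over many atoms. Everything else — existence, uniqueness, equivariance, and the identification with the minimizer of $F_\mu$ — is routine once the CAT(0) structure and the iterative construction of Theorem \ref{thm_iterative} are in hand.
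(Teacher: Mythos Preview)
The paper does not supply its own proof of this proposition; it is quoted directly from Sturm \cite{Stu} (Lem.~5.1, Thm.~6.3, Prop.~4.3), where the barycenter map is constructed on general global NPC spaces precisely along the lines you outline: variance minimizer on measures with finite second moment, $W_1$-Lipschitz on finitely supported measures, unique extension by density and completeness. Your sketch is therefore essentially the intended argument, merely specialized to $\SC^+$ and phrased through the paper's vectorial-distance machinery.

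On the obstacle you single out: the $W_1$ bound really does come out of the ingredients already on the page, so there is no gap. After atom-splitting so that $\mu=\frac{1}{N}\sum\delta_{p_i}$ and $\nu=\frac{1}{N}\sum\delta_{q_i}$ with $\frac{1}{N}\sum_i d(p_i,q_i)$ approximating $W_1(\mu,\nu)$, change one atom at a time. Iterating \eqref{bar.2} together with the vectorial triangle inequality c) of Proposition~\ref{prop_vectorial_dist} (both of which are compatible with addition under $\preceq$, since the last partial sum in \eqref{def.prec} telescopes as $\log\det q-\log\det p$) yields $\vec{d}(\bary[\mu],\bary[\nu])\preceq\frac{1}{N}\sum_i\vec{d}(p_i,q_i)$. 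Passing to $d=\|\vec{d}\|_2$ via a) of Proposition~\ref{prop_vectorial_dist} and the ordinary triangle inequality for $\|\cdot\|_2$ gives $d(\bary[\mu],\bary[\nu])\leq\frac{1}{N}\sum_i d(p_i,q_i)$, which is the desired $W_1$ estimate.
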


The last inequality implies that in \eqref{funct.min}, $\int_{\SC^+} d(p,q)^2 \di \mu(q) < \infty$ for any $p \in \SC^+$ since $d(p,q) \leq d(p,I) + d(I,q)$ and the constant $d(p,I)$ is of class $L_2$ with respect to the probability measure.%

\begin{corollary}\label{bar.conmt}
The unweighted barycenter $\bary(p_1,\ldots,p_m)$ continuously depends on $p_i \in \SC^+$.
\end{corollary}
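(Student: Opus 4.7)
My plan is to deduce the corollary from the Lipschitz estimate \eqref{lipsch.bary} by identifying the unweighted barycenter with the measure-barycenter of a uniform empirical distribution, and showing that small perturbations of the $p_i$ produce small Wasserstein perturbations of that empirical measure.

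Concretely, I first observe that by definition $\bary(p_1,\ldots,p_m) = \bary[\mu]$ where $\mu = \mu(p_1,\ldots,p_m) := \frac{1}{m}(\delta_{p_1}+\cdots+\delta_{p_m}) \in \PC^1(\SC^+)$. Fix tuples $(p_1,\ldots,p_m)$ and $(q_1,\ldots,q_m)$ in $\SC^+$ and set $\mu := \mu(p_1,\ldots,p_m), \nu := \mu(q_1,\ldots,q_m)$. To estimate $W_1(\mu,\nu)$ from above, I exhibit an explicit coupling, namely the diagonal plan $P := \frac{1}{m}\sum_{i=1}^m \delta_{(p_i,q_i)}$. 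Its marginals are exactly $\mu$ and $\nu$, so $P \in (\mu|\nu)$, and the definition of $W_1$ gives
\begin{equation*}
W_1(\mu,\nu) \leq \int_{\SC^+ \tm \SC^+} d(p,q) \, \di P(p,q) = \frac{1}{m}\sum_{i=1}^m d(p_i,q_i).
\end{equation*}
Combining this with the $1$-Lipschitz property \eqref{lipsch.bary} of the barycenter map yields the quantitative estimate
\begin{equation*}
d\bigl(\bary(p_1,\ldots,p_m),\, \bary(q_1,\ldots,q_m)\bigr) \leq \frac{1}{m}\sum_{i=1}^m d(p_i,q_i).
\end{equation*}

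Since $d$ is the Riemannian distance of the trace metric on $\SC^+$ and the right-hand side tends to $0$ whenever $q_i \to p_i$ in $\SC^+$ for every $i$, this inequality immediately establishes that $\bary(p_1,\ldots,p_m)$ depends continuously — in fact Lipschitz continuously — on the tuple $(p_1,\ldots,p_m)$. I do not expect any real obstacle: the only nontrivial ingredient is the Lipschitz bound \eqref{lipsch.bary} of Proposition \ref{prop_barycentermap}, and the Wasserstein estimate via the diagonal coupling is standard. The iterative characterization of Theorem \ref{thm_iterative} or the one-coordinate estimate \eqref{bar.2} (combined with the fact that $\xi \preceq \eta$ on $\fa^+$ implies $\|\xi\|_2 \leq \|\eta\|_2$ via Hardy--Littlewood--Pólya for the convex function $t \mapsto t^2$, together with Proposition \ref{prop_vectorial_dist}a)) would furnish an alternative route, but the Wasserstein argument is the most economical.
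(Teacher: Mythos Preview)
Your proof is correct and follows essentially the same route as the paper: both arguments identify the unweighted barycenter with $\bary[\mu]$ for the uniform empirical measure, bound $W_1$ between two such measures, and then invoke the $1$-Lipschitz estimate \eqref{lipsch.bary}. The only cosmetic difference is that you bound $W_1$ via an explicit diagonal coupling (obtaining $\frac{1}{m}\sum_i d(p_i,q_i)$), whereas the paper uses the dual Kantorovich--Rubinstein formula \eqref{weiss.am} (obtaining $\max_i d(p_i,p_i')$); your bound is in fact the sharper of the two, but either suffices.
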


This is immediate from (2) in Proposition \ref{prop_barycentermap}, since \eqref{weiss.am} implies that $W_1(\mu,\nu) \leq \max_i d(p_i, p_i^\prime)$ if $\mu$ and $\nu$ are the sums of equi-weighted Dirac measures at points $p_i$ and $p_1^\prime, \ldots, p_m^\prime$, respectively.%

\subsection{Derivatives of singular values and other matrix functionals}

The {\it $m$-th largest eigenvalue} of a symmetric matrix $A \in \SC$ is the $m$-th term $\lambda_m(A)$ of the nonincreasing sequence $\lambda_1(A) \geq \ldots \geq \lambda_n(A)$ of its eigenvalues, each repeated with regard to its algebraic multiplicity $r_m$. We put $i_m := m-m_{\text{first}}+1$, where $m_{\text{first}}$ is the first position in the sequence that accommodates the number $\lambda_m(A)$, whereas $m_{\text{last}}$ is the last position:%
\begin{align*}
  \lambda_1(A) \geq \ldots &\geq \lambda_{m - i_m}(A) > \lambda_{m_{\text{first}}}(A)  = \ldots = \lambda_{m_{\text{last}}}(A) \\
					 &> \lambda_{m_{\text{last}}+1}(A) \geq \ldots \geq \lambda_n(A).%
\end{align*}
Also let $U$ be an orthogonal matrix that reduces $A$ to a diagonal matrix $U\trn A U$ with nonincreasing diagonal entries. We denote by $\tilde{U}_m$ the matrix that is obtained by depriving $U$ of all columns, except for those in the range from $m_{\text{first}}$ to $m_{\text{last}}$.%

\begin{theorem}[\protect{\cite[Thm.~4.5]{HYe}}]\label{thm_eigs_der}
Let $\OC \subset \R^p$ be an open set and $A:\OC \rightarrow \SC$ be a map of class $C^1$. For any $m =1, \ldots, n, \xi_0 \in \OC$ and unit-length vector $d \in \R^p$, the derivative%
\begin{equation*}
  \lim_{\ve \to 0+} \frac{f(\xi_0+\ve d) - f(\xi_0)}{\ve}%
\end{equation*}
of the function $\xi \in \OC \mapsto f(\xi):= \lambda_m [A(\xi)]$ in direction $d$ exists. It equals the $i_m$-largest eigenvalue of the following $r_m \times r_m$ matrix built from $A(\cdot)$ (along with $\widetilde{U}_m,r_m,i_m,\xi_0$)
\begin{equation*}
  \tilde{U}_m\trn \Bigl[ \sum_{j=1}^p d_j \frac{\partial A}{\partial \xi_j}(\xi_0) \Bigr]\tilde{U}_m.%
\end{equation*}
\end{theorem}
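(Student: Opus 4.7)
The plan is to prove this classical perturbation-theoretic formula by localizing to the eigenvalue cluster around $\lambda_m(A(\xi_0))$ and reducing the question to a finite-dimensional first-order matrix perturbation problem on the associated invariant subspace. First, since only a one-sided directional derivative is claimed, I would restrict attention to the curve $A(\ve) := A(\xi_0 + \ve d)$ for $\ve \in [0,\ve_0]$, which is of class $C^1$ with $A'(0) = \sum_{j=1}^p d_j \frac{\partial A}{\partial \xi_j}(\xi_0)$. After an orthogonal change of coordinates I may also assume that $A(0)$ is already diagonal with its eigenvalues in nonincreasing order, so that $\widetilde{U}_m$ becomes the canonical embedding $\R^{r_m} \hookrightarrow \R^n$ corresponding to coordinates $m_{\mathrm{first}}, \ldots, m_{\mathrm{last}}$.

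Next, I would isolate the degenerate eigenvalue $\mu := \lambda_m(A(0))$ with a small circle $\Gamma \subset \mathbb{C}$ of radius less than half the spectral gap separating $\mu$ from the remaining distinct eigenvalues of $A(0)$. For $\ve$ sufficiently small, $\Gamma$ remains in the resolvent set of $A(\ve)$, so the Riesz projection
\[
\Pi(\ve) := \frac{1}{2\pi i}\oint_\Gamma (zI - A(\ve))^{-1}\, dz
\]
is of class $C^1$ in $\ve$, has constant rank $r_m$, and satisfies $\Pi(0) = \widetilde{U}_m \widetilde{U}_m\trn$. The cluster eigenvalues of $A(\ve)$, i.e., those inside $\Gamma$, coincide with the spectrum of the compression $M(\ve) := \Pi(\ve) A(\ve) \Pi(\ve)$ on $\mathrm{range}\,\Pi(\ve)$. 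Taking a $C^1$ family of orthonormal frames $V(\ve) \in \R^{n \times r_m}$ for $\mathrm{range}\,\Pi(\ve)$ (for instance, the polar factor of $\Pi(\ve)\widetilde{U}_m$, with $V(0) = \widetilde{U}_m$), differentiation of $V(\ve)\trn V(\ve) \equiv I_{r_m}$ combined with $A(0) V(0) = \mu V(0)$ shows that the $C^1$ representation matrix admits the expansion
\[
V(\ve)\trn A(\ve) V(\ve) = \mu I_{r_m} + \ve\, \widetilde{U}_m\trn A'(0) \widetilde{U}_m + o(\ve),
\]
since the contribution of $V'(0)$ cancels out.

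Consequently, the $r_m$ cluster eigenvalues of $A(\ve)$ are, after nonincreasing ordering, of the form $\mu + \ve\, \lambda_k(\widetilde{U}_m\trn A'(0) \widetilde{U}_m) + o(\ve)$ for $k = 1,\ldots,r_m$, while the non-cluster eigenvalues stay uniformly away from the cluster by continuity of the spectrum. Hence for all small $\ve > 0$ the cluster occupies precisely positions $m_{\mathrm{first}}, \ldots, m_{\mathrm{last}}$ in the globally ordered spectrum of $A(\ve)$, so $\lambda_m(A(\ve))$ equals the $i_m$-th largest cluster eigenvalue:
\[
\lambda_m(A(\ve)) = \mu + \ve\, \lambda_{i_m}\bigl(\widetilde{U}_m\trn A'(0) \widetilde{U}_m\bigr) + o(\ve).
\]
Dividing by $\ve$ and passing $\ve \to 0+$ yields exactly the stated formula.

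The principal obstacle I anticipate is the ordering bookkeeping that produces the index $i_m$: one must argue that for all small $\ve > 0$ the cluster eigenvalues fill a fixed block of positions in the sorted spectrum of $A(\ve)$, and that within this block their order is inherited from the nonincreasing ordering of the eigenvalues of $\widetilde{U}_m\trn A'(0) \widetilde{U}_m$. This bookkeeping also explains why only a one-sided derivative is claimed: replacing $d$ by $-d$ negates $\widetilde{U}_m\trn A'(0) \widetilde{U}_m$ and reverses the internal order of the cluster, so $i_m$ would have to be replaced by $r_m - i_m + 1$, which generally breaks two-sided differentiability unless the compressed matrix happens to be a scalar multiple of the identity.
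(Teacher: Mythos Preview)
The paper does not supply a proof of this theorem: it is quoted verbatim as \cite[Thm.~4.5]{HYe} and used as a black box (only the downstream Corollary~\ref{cor_singval_der} is proved in the paper). So there is no ``paper's own proof'' to compare against.

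That said, your argument is a correct and standard proof of this perturbation formula. The key steps --- Riesz projection onto the cluster, a $C^1$ orthonormal frame $V(\ve)$ with $V(0)=\widetilde U_m$, the cancellation of the $V'(0)$ contribution via $A(0)V(0)=\mu V(0)$ together with $V'(0)\trn V(0)+V(0)\trn V'(0)=0$, and finally Weyl's inequality to pass from the $C^1$ expansion of $V(\ve)\trn A(\ve)V(\ve)$ to the ordered-eigenvalue expansion $\mu+\ve\,\lambda_k(\widetilde U_m\trn A'(0)\widetilde U_m)+o(\ve)$ --- are all sound. Your remark on the one-sidedness (reversing $d$ replaces $i_m$ by $r_m-i_m+1$) is also the right explanation. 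The only place one might ask for an extra line is the existence of the $C^1$ frame: your polar-factor construction from $\Pi(\ve)\widetilde U_m$ is fine once you note that this matrix has full column rank for small $\ve$ and that the polar factor depends smoothly on a full-rank matrix.
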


\begin{corollary}\label{cor_singval_der}
For any $C^1$-smooth function $g:\R \rightarrow \Gl(n,\R)$ with $g(0) = I$ and $\dot{g}(0) = H \in \R^{n\tm n}$,
the left-hand side of \eqref{def.sigma} with $g=g(t)$ has the right derivative at $t=0$ and%
\begin{equation*}
  \frac{d}{dt}\vec{\sigma}[g(t)] \Bigl|_{t=0^+}= \frac{1}{2\ln(2)}[\lambda_1(H + H\trn),\ldots,\lambda_n(H + H\trn)].%
\end{equation*}
\end{corollary}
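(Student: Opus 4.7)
The plan is to reduce Corollary~\ref{cor_singval_der} to the directional-derivative formula in Theorem~\ref{thm_eigs_der}. Since the singular values of $g(t)$ are by definition the nonnegative square roots of the eigenvalues of the symmetric matrix $A(t) := g(t)\trn g(t)$, the core task is to compute the right derivative at $t=0$ of $t \mapsto \lambda_m[A(t)]$ for each $m$. A first-order expansion using $g(0) = I$ and $\dot g(0) = H$ gives
\[
  A(t) = I + t(H + H\trn) + o(t), \qquad \dot A(0) = H + H\trn.
\]

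Next I apply Theorem~\ref{thm_eigs_der} with $\OC = \R$, the above $A(\cdot)$, $\xi_0 = 0$, and unit direction $d = 1$. The situation simplifies drastically because $A(0) = I$ carries the single eigenvalue $1$ of algebraic multiplicity $n$: one may take $U = I$ as the orthogonal diagonalizer, so that in the theorem's notation $r_m = n$, $m_{\text{first}} = 1$, $m_{\text{last}} = n$, $i_m = m$, and $\tilde U_m = I$ for every $m$. The reduced $r_m \tm r_m$ block of the theorem is thus simply $H + H\trn$ itself, yielding
\[
  \frac{d}{dt} \lambda_m[A(t)]\Big|_{t=0^+} = \lambda_m(H + H\trn).
\]

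Finally, the $m$-th entry of $\vec\sigma[g(t)]$ is $\log \alpha_m(g(t)) = \tfrac{1}{2\ln 2}\ln \lambda_m[A(t)]$, and $\lambda_m[A(0)] = 1$, so the chain rule supplies the coordinate-wise identity
\[
  \frac{d}{dt}\log \alpha_m(g(t))\Big|_{t=0^+} = \frac{\lambda_m(H + H\trn)}{2\ln 2},
\]
which assembled into a vector over $m = 1,\ldots,n$ is exactly the claim.

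The only point deserving attention is the fully degenerate multiplicity of $A(0)$, which classically obstructs smooth dependence of individual eigenvalues on the matrix; however, this is precisely the scenario Theorem~\ref{thm_eigs_der} was designed to accommodate, and in the present setting the degeneracy is in fact helpful, since the projection onto the eigenspace coincides with the identity and removes any need to track a specific diagonalizing basis. Hence I expect no genuine obstacle beyond this bookkeeping.
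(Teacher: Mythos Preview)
Your proposal is correct and follows essentially the same approach as the paper: define the symmetric $C^1$ map $A(t)$ from $g(t)$ (you use $g^\top g$, the paper uses $g g^\top$, which have identical eigenvalues and the same derivative $H+H^\top$ at $0$), exploit $A(0)=I$ to make the full degeneracy $r_m=n$, $i_m=m$, $\tilde U_m=I$ in Theorem~\ref{thm_eigs_der}, and then pass through the logarithm via the chain rule. There is no substantive difference between the two arguments.
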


\begin{proof}
The map $t \mapsto A(t) := g(t)g(t)\trn \in \SC$ is of class $C^1$ and
\begin{gather}
\vec{\sigma}[g(t)] \overset{\text{\eqref{def.sigma}}}{=} \frac{1}{2}\left[ \log \lambda_1(A(t)),\ldots,\log \lambda_n(A(t)) \right],
\label{log.diff}
\\
\nonumber
A(0)=I \Rightarrow r_m=n, i_m = m, \widetilde{U}_m = I, \qquad \dot{A}(0) = H + H\trn.
\end{gather}
By applying Theorem \ref{thm_eigs_der} to $p=1, \OC := \br, \xi_0:=0$, we see that the right derivative of $\lambda_m[A(t)]$ exists and is equal to $\lambda_m[\dot{A}(0)] = \lambda_m(H + H\trn)$. This and \eqref{log.diff} complete the proof.
\end{proof}

\begin{lemma}\label{lem.difsr}
The function $(p,q) \in \SC^+ \tm \SC^+ \mapsto \Upsilon (p,q) := p^{-1/2}q^{1/2} \in \Gl(n,\R)$ is of class $C^1$ and its derivative at any point with $p=q$ is given by%
\begin{gather*}
  \rmD \Upsilon(p,p) \left[ \begin{smallmatrix} v_p \\ v_q \end{smallmatrix} \right] = p^{-1/2} h(v_q - v_p) \quad \forall v_p, v_q \in \SC,
\end{gather*}
where $h = h(v) \in \SC$ is the unique solution of the equation%
\begin{equation}\label{eq_matrixroot_der}
  h p^{1/2} + p^{1/2} h =  v \in \SC.%
\end{equation}
\end{lemma}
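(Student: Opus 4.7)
The plan is to recognize $\Upsilon$ as the composition of three well-known smooth operations on $\SC^+$ (square root, inversion, multiplication), to compute $\rmD\Upsilon(p,p)$ by the product and chain rules, and to exploit the cancellation that occurs because $p^{-1/2}\cdot p^{1/2}=I$ at the diagonal.

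First I would establish that the matrix square-root map $G:\SC^+\to\SC^+$, $q\mapsto q^{1/2}$, is of class $C^1$ (in fact $C^\infty$). The map $S\in\SC^+\mapsto S^2$ is smooth with derivative the Lyapunov operator $L_S:\SC\to\SC$, $L_S(H):=SH+HS$. For $S\in\SC^+$, this operator is bijective, since in an orthonormal basis diagonalizing $S$ its eigenvalues are the pairwise sums $\lambda_i(S)+\lambda_j(S)>0$. The inverse function theorem therefore makes $G$ smooth on $\SC^+$ and identifies $\rmD G(q)[v]$ as the unique $h\in\SC$ solving $h\,q^{1/2}+q^{1/2}\,h=v$. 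Since $\Upsilon(p,q)=G(p)^{-1}G(q)$, and both matrix inversion and multiplication are smooth on their domains, $\Upsilon\in C^1(\SC^+\times\SC^+,\Gl(n,\R))$.

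Next I would compute $\rmD\Upsilon(p,p)$ via the product rule. Set $F(\cdot):=G(\cdot)^{-1}$; differentiating the identity $F(p)G(p)=I$ gives $\rmD F(p)[v_p]=-F(p)\,\rmD G(p)[v_p]\,F(p)=-p^{-1/2}h(v_p)p^{-1/2}$, where $h(v)$ denotes the unique symmetric solution of \eqref{eq_matrixroot_der}. Therefore
\begin{align*}
\rmD\Upsilon(p,p)\left[\begin{smallmatrix}v_p\\ v_q\end{smallmatrix}\right]
&=\rmD F(p)[v_p]\cdot p^{1/2}+p^{-1/2}\cdot\rmD G(p)[v_q]\\
&=-p^{-1/2}h(v_p)p^{-1/2}p^{1/2}+p^{-1/2}h(v_q)\\
&=p^{-1/2}\bigl(h(v_q)-h(v_p)\bigr)=p^{-1/2}h(v_q-v_p),
\end{align*}
where the last equality uses linearity of $h$ in its argument.

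Finally, I would verify well-posedness of \eqref{eq_matrixroot_der}: existence and uniqueness of $h\in\SC$ follow at once from the invertibility of $L_{p^{1/2}}$ established above. Symmetry of $h$ when $v\in\SC$ is automatic: transposing the equation yields $h\trn p^{1/2}+p^{1/2}h\trn=v$, so by uniqueness $h\trn=h$. The only real obstacle is the smoothness of the matrix square root together with the clean identification of its Fréchet derivative through the Lyapunov equation; once that tool is in place the rest is a one-line chain-rule computation, with the cancellation $p^{-1/2}p^{1/2}=I$ producing the symmetric appearance of $v_p$ and $v_q$ in the final formula.
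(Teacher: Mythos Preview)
Your proof is correct and follows essentially the same route as the paper: decompose $\Upsilon(p,q)=p^{-1/2}q^{1/2}$ into the square-root map and its inverse, identify the derivative of the square root via the Lyapunov equation, and finish with the product rule. The only cosmetic differences are that the paper cites external references for smoothness of the square root (whereas you invoke the inverse function theorem directly) and obtains the derivative of $p\mapsto p^{-1/2}$ by composing inversion with the square root rather than by differentiating $F(p)G(p)=I$.
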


{\bf Proof:} Existence and uniqueness of the solution for the Lyapunov equation \eqref{eq_matrixroot_der} is a well-known fact; see e.g., \cite[Ch.~3]{Khalil96}. We put $\Xi(p) := p^{-1}, \Omega_\pm(p):=p^{\pm1/2} \; \forall p \in \SC^+$. It is immediate from \cite[Thm.~D2]{Bos07} and \cite[Thm.~1.1]{MoNi17a} that $\Xi, \Omega, \Upsilon$ are $C^1$-smooth and%
\begin{equation*}
  \dif \Xi(p) v = - p^{-1} v p^{-1}, \quad \dif \Omega_+(p) v = h(v) \qquad \forall v \in \SC.%
\end{equation*}
Since $\Omega_- = \Xi \circ \Omega_+$, we have $\dif \Omega_-(p) = \dif \Xi[\Omega_+(p)] \circ \dif \Omega_+(p) \Leftrightarrow
\dif \Omega_-(p) v = - p^{-1/2} h(v)p^{-1/2}$. Finally, $\Upsilon(p,q) = \Omega_-(p) \times \Omega_+(q)$, hence%
\begin{align*}
  \rmD \Upsilon(p,p) \left[ \begin{smallmatrix} v_p \\ v_q \end{smallmatrix} \right] &= \dif \Omega_-(p) v_p \times \Omega_+(p) + \Omega_-(p) \times \dif \Omega_+(p) v_q \\
&= - p^{-1/2} h(v_p)p^{-1/2} \times p^{1/2} + p^{-1/2} \times h(v_q) \\
&= p^{-1/2} [h(v_q) - h(v_p)] = p^{-1/2} h(v_q-v_p). \text{\endproof}
\end{align*}

\subsection{Linear cocycles}\label{sec.cocycl}

A {\it dynamical system} on a metric space $X$ is given by its {\it evolution function (dynamic semiflow)} $\Phi : \mathfrak{T}_+ \times X \to X$, where $\mathfrak{T}_+$ is as in \eqref{given_sys} and $\Phi(0,x) = x \; \forall x, \Phi\big[ s, \Phi(t, x)\big] = \Phi[s+t,x]\; \forall s,t \in \mathfrak{T}_+, x \in X$. For this system, a {\it linear cocycle} is defined as a mapping $(t,x) \mapsto A^{(t)}(x) \in \Gl(k,\R)$ such that%
\begin{equation}\label{def.cycle}
  A^{(0)}(x) = I \mbox{\ and\ } A^{(s+t)}(x) = A^{(s)}[\Phi(t,x)] A^{(t)}(x)
\end{equation}
for all $x \in X,\ s,t \in \mathfrak{T}_+$. In the discrete-time case, the semiflow is determined by $\Psi(\cdot):= \Phi(1,\cdot)$ (specifically, $\Phi(t, \cdot)$ is the $t$-th iterate of $\Psi(\cdot)$ for $t>0$ and the identity map for $t=0$), and the cocycle by its {\it generator} $\mathscr{A}(\cdot):= A^{(1)}(\cdot)$ since \eqref{def.cycle} shapes into
\begin{equation*}
  A^{(0)}(x) = I,\quad A^{(t)}(x) = \mathscr{A}[\Psi^{t-1}(x)]\cdots \mathscr{A}[\Psi^1(x)]\mathscr{A}[x] .
\end{equation*}
Two linear cocycles $A,B:\mathfrak{T}_+ \tm X \rightarrow \Gl(k,\R)$ for a common dynamical system are said to be \emph{conjugate} if there exists a continuous map $V:X \rightarrow \Gl(k,\R)$ (the {\it conjugacy}) such that $B^{(t)}(x) = V[\Phi(t,x)]^{-1}A^{(t)}(x)V(x)$ for all $x\in X$, $t \in \mathfrak{T}_+$. In the discrete-time case, it suffices to test only $t=1$, i.e., the relation $\mathscr{B}(x) = V[\Psi(x)]^{-1}\mathscr{A}(x)V(x) \; \forall x\in X$, where $\mathscr{B}(\cdot):= B^{(1)}(\cdot)$.

\section{Proof of Theorem~\ref{thm:main_DT}}\label{app.dtc}

In this section, the assumptions of this theorem are adopted; in particular, we consider the discrete-time system \eqref{given_sys} (the case {\bf d-t)}) and denote by $\mathbb{Z}_+$ the set of all nonegative integers. The first step towards proving (i) in Theorem \ref{thm:main_DT} is the following.%

\begin{lemma}
There are $C_\pm \in \br$ such that for all $t \in \mathbb{Z}_+$, $x \in K$,
\begin{equation}\label{eq_normeq}
 -\infty < C_- \leq \varSigma^P(t,x) - \varSigma^I(t,x)
\leq C_+ < \infty .
\end{equation}
Here $\varSigma^P(t,x) := \varSigma^P\big[x \big| \varphi^t \big]$ is defined in \eqref{res.ineq} and so $\varSigma^I(t,x)$ is built from $\alpha_i^I(t,x)$, which are the ordinary singular values (by the definition in Lemma~\ref{lem.for.sing}).
\end{lemma}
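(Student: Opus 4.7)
The plan is to exploit that $P$ is continuous on the compact, forward-invariant set $K$, so the two sets of singular values differ only by a bounded multiplicative distortion; after taking logarithms and capping at zero, this translates into the uniform bound asserted.

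First I would use Lemma \ref{lem.for.sing} with $\phi:= \varphi^t$ to identify $\alpha_i^P(t,x)$ as the singular values of $B_t(x) := P[\varphi^t(x)]^{1/2}\,\rmD\varphi^t(x)\,P(x)^{-1/2}$, while $\alpha_i^I(t,x)$ are the singular values of $\rmD\varphi^t(x)$ itself. Forward invariance of $K$ and continuity of $P:K \to \SC^+$ on the compact set $K$ then yield a constant $C>0$ with $\|P(y)^{1/2}\|_{\mathrm{op}} \leq C$ and $\|P(y)^{-1/2}\|_{\mathrm{op}} \leq C$ for every $y \in K$, uniformly in $y$; in particular, this applies simultaneously to $y=x$ and $y=\varphi^t(x)$.

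Next I would invoke the standard Courant--Fischer consequence that for any $k \times k$ matrices $Q$, $R$ and any $A$,
\begin{equation*}
\alpha_i(QAR) \leq \|Q\|_{\mathrm{op}}\,\|R\|_{\mathrm{op}}\,\alpha_i(A), \qquad i=1,\ldots,n.
\end{equation*}
Writing $B_t(x) = Q A R$ with $Q = P[\varphi^t(x)]^{1/2}$, $A = \rmD\varphi^t(x)$, $R = P(x)^{-1/2}$, and then reversing the roles through $A = Q^{-1} B_t(x) R^{-1}$, I would deduce the two-sided bound
\begin{equation*}
C^{-2}\alpha_i^I(t,x) \;\leq\; \alpha_i^P(t,x) \;\leq\; C^{2}\alpha_i^I(t,x)
\end{equation*}
for all $t \in \mathbb{Z}_+$, $x \in K$, $i=1,\ldots,n$. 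A side consequence is that $\alpha_i^P(t,x)$ and $\alpha_i^I(t,x)$ vanish simultaneously (rank is preserved under left/right multiplication by invertibles), so the convention $\log 0 := -\infty$ is harmless.

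Taking logarithms, with $M := 2\log C$, this reads $|\log\alpha_i^P(t,x) - \log\alpha_i^I(t,x)| \leq M$ whenever the two singular values are nonzero, and in the complementary case both $\max\{0,\log\alpha_i^P\}$ and $\max\{0,\log\alpha_i^I\}$ equal $0$. Using the elementary $1$-Lipschitz property $|\max\{0,a\} - \max\{0,b\}| \leq |a-b|$ and summing over $i=1,\ldots,n$ then gives
\begin{equation*}
\big|\,\varSigma^P(t,x) - \varSigma^I(t,x)\,\big| \;\leq\; nM,
\end{equation*}
so the lemma follows with $C_\pm := \pm nM$. There is no real obstacle: the only point requiring care is the treatment of zero singular values, which is handled automatically because conjugation-like multiplication by invertibles preserves rank.
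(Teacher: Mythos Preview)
Your proof is correct, but it takes a somewhat different and in fact more elementary route than the paper's. The paper rewrites $\varSigma^P(t,x)$ as $\log\max_{0\le k\le n}\omega_k(B^{(t)}(x))$, where $\omega_k$ is the product of the $k$ largest singular values, and then applies Horn's inequality $\omega_k(QAR)\le\omega_k(Q)\,\omega_k(A)\,\omega_k(R)$ to bound the ratio of these maxima by quantities depending only on $P(\cdot)^{\pm 1/2}$ over $K$. You instead bound each individual singular value via the simpler operator-norm estimate $\alpha_i(QAR)\le\|Q\|_{\mathrm{op}}\|R\|_{\mathrm{op}}\,\alpha_i(A)$, obtain the two-sided comparison $C^{-2}\alpha_i^I\le\alpha_i^P\le C^{2}\alpha_i^I$, and then use the $1$-Lipschitz property of $a\mapsto\max\{0,a\}$ termwise. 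Your approach avoids Horn's inequality altogether and yields an explicit constant $nM$; the paper's approach, while slightly heavier, has the minor advantage of packaging $\varSigma^P$ directly as a single logarithm of a product, which fits the way the quantity reappears later in the paper. Your treatment of the degenerate case (some $\alpha_i=0$) is also cleaner than what the paper leaves implicit.
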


\begin{proof}
Let $\omega_k(C)$ be the product of $k$ largest singular values of the square matrix $C$ if $k\ge 1$; and $\omega_0(C):=1$. We apply Lem.~\ref{lem.for.sing} to the $t$-th iterate $\phi:=\varphi^t$ of $\varphi$, denote by $A^{(t)}(x)$ and $B^{(t)}(x)$ the respective $A(x)$ and $B(x)$ from that lemma, and see that thanks to the last claim of that lemma,%
\begin{equation*}
\varSigma^P(t,x) - \varSigma^I(t,x) =
  \log \frac{\max_{0 \leq k \leq n}\omega_k(B^{(t)}(x))}{\max_{0 \leq k \leq n}\omega_k(A^{(t)}(x))}.%
\end{equation*}
By Horn's inequality \cite[Ch.~I, Prop.~2.3.1]{BLReitman}, see also \cite[Lem.~8.3]{PoMaNonlin}
\begin{equation*}
  \omega_k(B^{(t)}(x)) \leq \omega_k(P(\varphi^t(x))^{1/2}) \omega_k(A^{(t)}(x)) \omega_k(P(x)^{-1/2}).%
\end{equation*}
Here $x \in K \Rightarrow y:= \varphi^t(x) \in K \; \forall t$ by Assumption \ref{ass.smooth}. Since the singular values continuously depend on the matrix \cite[Thm.~2.6.4]{Horn}, so does $\omega_k$. Hence the following functions are continuous as well:%
\begin{equation*}
  y \in K \mapsto \omega_k(P(y)^{1/2}) \quad \text{and} \quad y \in K \mapsto \omega_k(P(y)^{-1/2})%
\end{equation*}
So the maximum over the compact set $K$ is attained and finite for both quantities. This observation yields the upper estimate in \eqref{eq_normeq}. The lower estimate is obtained likewise by applying Horn's inequality to $\omega_k(A^{(t)}(x))$. Thus we see that \eqref{eq_normeq} does hold.
\end{proof}

\begin{proof}{(of (i) in Theorem \ref{thm:main_DT}):}
By combining \eqref{eq_normeq} with Remark \ref{remineq}, we get%
\begin{equation}\label{first.step}
  H_{\res}(\varphi,K) {\leq} \max_{x\in K} \limsup_{t \rightarrow \infty}\frac{1}{t}\varSigma^P(t,x).%
\end{equation}
The generalized Horn's inequality \cite[Ch.~I, Prop.~7.4.3]{BLReitman}) implies that $\varSigma^P[t+s,x] \leq \varSigma^P[s,\varphi^t(x)] + \varSigma^P[t,x]$. Then, by \cite[Thm.~A.3]{Mor} and the definition of $\varSigma^P(t,x)$, the right-hand side of \eqref{first.step} equals%
\begin{align*}
  \inf_{t>0} \max_{x \in K}\frac{1}{t}\sum_{i=1}^n \max\{0,\log \alpha_i^P(t,x)\} \stackrel{t:=1}{\leq} \max_{x\in K} \sum_{i=1}^n \max\{0,\log \alpha_i^P(x|\varphi)\}.%
\end{align*}
\end{proof}

A key to the proof of {\bf (ii)} in Theorem \ref{thm:main_DT} is the following lemma, which uses the concepts and notations from Subsection \ref{sec.cocycl}.%

\begin{lemma}\label{lem_dc}
Let $A$ be a continuous linear cocycle over a continuous discrete-time dynamical system $\Phi$ on a compact metric space $X$. For any natural $N$, there exists a continuous cocycle $B$, conjugate to $A$, such that for its generator $\mathscr{B}$, the following holds:
\begin{equation}\label{fg.dd}
  \vec{\sigma}[\mathscr{B}(x)] \preceq N^{-1}\vec{\sigma}[A^{(N)}(x)] \quad \mbox{\rm for all\ } x \in X.%
\end{equation}
The involved conjugacy $V$ can be chosen so that $V(x) \in \SC^+$ for all $x$.
\end{lemma}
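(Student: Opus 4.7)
The plan is to build the conjugacy $V$ in the form $V(x) = P(x)^{-1/2}$ for a continuous map $P:X \to \SC^+$, so that $V(x) \in \SC^+$ is automatic. With this choice, $\mathscr{B}(x) = P[\Phi(x)]^{1/2}\mathscr{A}(x)P(x)^{-1/2}$, and a direct computation of $\mathscr{B}(x)\trn \mathscr{B}(x)$ combined with the definition \eqref{def.vecd} of $\vec d$ gives
\begin{equation*}
  \vec\sigma(\mathscr{B}(x)) = \vec\sigma\bigl(P(x)^{-1/2}Q(x)^{1/2}\bigr) = \tfrac{1}{2}\vec d(P(x),Q(x)),
\end{equation*}
where $Q(x) := \mathscr{A}(x)\trn \ast P[\Phi(x)] = \mathscr{A}(x)\trn P[\Phi(x)]\mathscr{A}(x)$. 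So the task reduces to choosing $P$ so that $\vec d(P(x),Q(x)) \preceq (2/N)\vec\sigma(A^{(N)}(x))$. Motivated by the wish to spread the $N$-step stretching evenly across single steps, I would take
\begin{equation*}
  P(x) := \bary\bigl(P_0(x),P_1(x),\ldots,P_{N-1}(x)\bigr), \quad P_k(x) := A^{(k)}(x)\trn A^{(k)}(x),
\end{equation*}
the unweighted barycenter of the pullbacks (via the cocycle) of the standard inner product along the forward orbit of length $N$. Continuity of $P$ is immediate from continuity of the cocycle together with Corollary~\ref{bar.conmt}, and continuity of $V = P^{-1/2}$ then follows from that of $p \mapsto p^{-1/2}$ on $\SC^+$.

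The pivotal step is to identify $Q(x)$ with a time-shifted version of $P(x)$. The cocycle identity $A^{(k)}[\Phi(x)]\mathscr{A}(x) = A^{(k+1)}(x)$ yields $\mathscr{A}(x)\trn \ast P_k[\Phi(x)] = P_{k+1}(x)$; combined with the equivariance of the barycenter under the isometric $\ast$-action (formula \eqref{bar.1}), this gives
\begin{equation*}
  Q(x) = \bary\bigl(P_1(x),P_2(x),\ldots,P_N(x)\bigr).
\end{equation*}
Hence $P(x)$ and $Q(x)$ are unweighted barycenters of two $N$-tuples that differ in a single entry: $P_0(x)$ is replaced by $P_N(x)$. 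After a cyclic reordering via \eqref{bar.3}, the single-entry perturbation estimate \eqref{bar.2} with weight $\omega_m = 1/N$ delivers
\begin{equation*}
  \vec d(P(x),Q(x)) \preceq \tfrac{1}{N}\vec d\bigl(P_0(x),P_N(x)\bigr) = \tfrac{2}{N}\vec\sigma\bigl(A^{(N)}(x)\bigr),
\end{equation*}
where the last equality uses property a) of Proposition~\ref{prop_vectorial_dist} together with $\lambda_i(P_N(x)) = \sigma_i(A^{(N)}(x))^2$. Dividing by $2$ produces \eqref{fg.dd}.

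The most delicate point I anticipate is verifying the equality at $k = n$ hard-wired into the definition \eqref{def.prec} of $\preceq$. This reduces to the identity $\log\det \bary(\omega;p_1,\ldots,p_m) = \sum_i \omega_i \log\det p_i$, itself a reflection of the geodesic linearity of $\log\det$ on $(\SC^+,\langle\cdot,\cdot\rangle_p)$; a short telescoping argument based on $A^{(k)}[\Phi(x)] = A^{(k+1)}(x)\mathscr{A}(x)^{-1}$ then matches the full trace of $\vec d(P(x),Q(x))$ with $(2/N)\log|\det A^{(N)}(x)|$, as required. Everything else reduces to compiling the properties of $\vec d$ and $\bary$ already collected in Propositions~\ref{prop_vectorial_dist} and \ref{prop_barycenter}.
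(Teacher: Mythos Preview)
Your proposal is correct and follows essentially the same line as the paper's own proof: define the conjugacy via the square root of a barycenter of $N$ matrices built from the cocycle, then observe that shifting by one time step changes exactly one entry of the $N$-tuple and apply the single-entry perturbation estimate \eqref{bar.2}. The only cosmetic difference is that you take $P_k(x)=A^{(k)}(x)^{\top}\ast I$ and act by $\mathscr{A}(x)^{\top}$, whereas the paper takes the inverses $A^{(k)}(x)^{-1}\ast I$ and acts by $\mathscr{A}(x)$; since $p\mapsto p^{-1}$ is an isometry of $\SC^+$ that commutes with the barycenter, the two constructions coincide (your $P(x)$ is exactly the inverse of the paper's $Q(x)$, and correspondingly the two conjugacies $V$ are inverses of each other). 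Your caution about the equality at $k=n$ in $\preceq$ is not strictly needed, since \eqref{bar.2} is already stated with $\preceq$ as defined in \eqref{def.prec}, and that equality is preserved throughout its proof.
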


\begin{proof}
For $x \in X$, let $Q(x)\in \SC^+$ be the unweighted barycenter:%
\begin{equation}
\label{def.qqq}
  Q(x) := \bary(I,A^{(1)}(x)^{-1} \ast I,\ldots,A^{(N-1)}(x)^{-1} \ast I).%
\end{equation}
Recall that we denote the time-$1$ map of $\Phi$ by $\Psi$. By \eqref{bar.1} and \eqref{bar.3}, then the matrix $A^{(1)}(x) \ast Q(x)$ is equal to%
\begin{align*}
  &\bary\bigl(A^{(1)}(x) \ast I,I,A^{(1)}(\Psi(x))^{-1} \ast I,\ldots,A^{(N-2)}(\Psi(x))^{-1} \ast I\bigr) \\
	 &\quad  = \bary\bigl(I,A^{(1)}(\Psi(x))^{-1} \ast I,\ldots,A^{(N-2)}(\Psi(x))^{-1} \ast I,A^{(1)}(x) \ast I\bigr).%
\end{align*}
Meanwhile, $Q[\Psi(x)]$ is the following matrix%
\begin{equation*}
	 \bary\bigl(I,A^{(1)}(\Psi(x))^{-1} \ast I,\ldots, A^{(N-1)}(\Psi(x))^{-1} \ast I\bigr).%
\end{equation*}
For the generator $\mathscr{A}(x) = A^{(1)}(x)$, we thus have%
\begin{multline}\label{eq_preceq}
  \vec{d}\,\big[Q(\Psi(x)),\mathscr{A}(x) \ast Q(x) \big]
 \overset{\text{\eqref{bar.2}}}{\preceq} \frac{\vec{d}\big[A^{(N-1)}(\Psi(x))^{-1} \ast I,\mathscr{A}(x) \ast I \big]}{N} \\
	 \overset{\text{b) in Prop.~\ref{prop_vectorial_dist}}}{=\!=\!=\!=\!=\!=} N^{-1}\vec{d}\big[ I,A^{(N)}(x) \ast I \big].%
\end{multline}
Thanks to Corollary \ref{bar.conmt}, the maps $Q(\cdot)$ and $x \in X \mapsto V(x) := Q(x)^{-1/2} \in \SC^+$ are continuous; also,
$V(x) \ast Q(x) = Q(x)^{-1/2} Q(x) Q(x)^{-1/2} = I$. For the conjugate linear cocycle generated by $\mathscr{B}(x) := V(\Psi(x))\mathscr{A}(x)V(x)^{-1}$, we have
\begin{gather*}
  \vec{\sigma}\big[ \mathscr{B}(x) \ast I \big] = \vec{d}\big[ I,\mathscr{B}(x) \ast I \big]
  \\
  \overset{\text{b) in Prop.~\ref{prop_vectorial_dist}}}{=\!=\!=\!=\!=\!=} \vec{d}\Big\{Q[\Psi(x)]^{1/2} \ast I,Q[\Psi(x)]^{1/2}\mathscr{B}(x) \ast I\Big\} \\
	= \vec{d}\Big\{Q[\Psi(x)],V[\Psi(x)]^{-1}\mathscr{B}(x) \ast I \Big\}
\\
= \vec{d}\Big\{Q[\Psi(x)],\mathscr{A}(x)V(x)^{-1} \ast I\Big\}
	 = \vec{d}\Big\{Q[\Psi(x)],\mathscr{A}(x) \ast Q(x)\Big\}
\\
\stackrel{\eqref{eq_preceq}}{\preceq} N^{-1}\vec{d}\big[ I,A^{(N)}(x) \ast I \big] \overset{\text{\eqref{def.vecd}}}{=} N^{-1}\vec{\sigma}(A^{(N)}(x) \ast I).%
\end{gather*}
To get \eqref{fg.dd}, it remains to note that $\vec{\sigma}(g\ast I) = 2\vec{\sigma}(g)$ by \eqref{def.sigma}.
\end{proof}

\begin{proof}{(of {\bf (ii)} in Theorem \ref{thm:main_DT}):} By \eqref{reslyapexp}, there exists $N$ such that
\begin{align*}
 H_{\res}(\varphi,K) &\geq \frac{1}{N} \max_{x \in K} \sum_{i=1}^n \max \big\{ 0, \log \alpha_i(N,x)\big\} - \ve
 \\
 &\overset{\text{(a)}}{=}  \max_{x \in K} \frac{1}{N} \max_{k=0,\ldots,n}\sum_{i=1}^{k}\log\alpha_i(N,x) - \ve,
 \label{start.iii}
\end{align*}
where $\sum_{i=1}^0 \ldots := 0$, and (a) is due to the fact that, first, we put the singular values in the nonincreasing order, and, second, $\max\{ 0, \log \alpha_i(N,x)\}=0$ whenever $\alpha_i(N,x) \leq 1$. We apply Lemma \ref{lem_dc} to the system \eqref{given_sys} to acquire a cocycle $B$ and conjugacy $V(x) \in \SC^+$. By \eqref{def.sigma}, \eqref{def.prec}, \eqref{fg.dd}, the singular values $\alpha_i[\mathscr{B}(x)]$ of $\mathscr{B}(x)$ are such that%
\begin{gather*}
\sum_{i=0}^{k}\log \alpha_i[\mathscr{B}(x)] \leq \frac{1}{N} \sum_{i=0}^{k}\log \alpha_i(N,x) \quad \forall k=0, \ldots, n
\\
\Rightarrow  H_{\res}(\varphi,K) \geq \max_{x \in K} \max_{k=0,\ldots,n}\sum_{i=1}^{k}\log \alpha_i[\mathscr{B}(x)] - \ve
\\
= \max_{x \in K} \sum_{i=1}^n \max \big\{ 0, \log \alpha_i[\mathscr{B}(x)] \big\} - \ve.
\end{gather*}
It remains to note that $\mathscr{B}(x) = V[\varphi(x)] \mathscr{A}(x)V(x)^{-1}$ for all $x\in K$ by the proof of Lemma \ref{lem_dc}, and so $\alpha_i[\mathscr{B}(x)] = \alpha^P_i(x|\varphi)$ for $P(x) := V(x)^2$ by the last claim of Lemma \ref{lem.for.sing}.
\end{proof}

\section{Proof of Theorem~\ref{thm:main_CT}}\label{app.ctc}

In this section, the assumptions of this theorem are adopted; in particular, the system \eqref{given_sys} is described by ODE $\dot{x} = \varphi(x)$. We use the dynamical flow $\varphi^t$ introduced just after Assumption \ref{ass.smooth} and the associated linear cocycle $(t,x) \mapsto  A^{(t)}(x) := \rmD \varphi^t(x) \in \Gl(n,\R)$.%

{\bf Proof of (i) in Theorem \ref{thm:main_CT}:} is by following the arguments from the proof of Theorem 14 in \cite{PartII}, where $v_d(x) :\equiv 0$. The only difference is that the claim $P(\cdot) \in C^1$ from \cite{PartII} is relaxed to Assumption \ref{ass.smoothP}. But this does not destroy these arguments, which can be seen by inspection that encompasses the proof of the underlying Proposition 8.6 in \cite{PoMaNonlin}.%

In proving {\bf (ii)}, the role of Lemma \ref{lem_dc} is played by the following.%

\begin{proposition}\label{lem_ct}
For every $T>0$, there exists a mapping $P:K \rightarrow \SC^+$ that obeys Asm.~{\rm \ref{ass.smoothP}} and the following inequality
\begin{equation}
\label{prop.mainw}
  \frac{1}{2\ln(2)} \big[ \varsigma_1^P(x),\ldots,\varsigma_n^P(x) \big] \preceq \frac{1}{T}\vec{\sigma}\big[ A^{(T)}(x) \big] \; \forall  x \in K,%
\end{equation}
where $\varsigma_1^P(x) \geq \varsigma_2^P(x) \geq \ldots \geq \varsigma_n^P(x)$ are the solutions of \eqref{eq_P_CT}.
\end{proposition}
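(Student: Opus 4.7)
The plan is to define $P$ as a continuous-time analogue of the discrete barycenter construction used in Lemma~\ref{lem_dc}. For each $x \in K$, set
\[
Q(x) := \bary[\mu_x], \qquad P(x) := Q(x)^{-1},
\]
where $\mu_x \in \PC^1(\SC^+)$ is the pushforward of the uniform probability measure on $[0,T]$ under $s \mapsto A^{(s)}(x)^{-1} \ast I$, with $A^{(s)}(x) := \rmD\varphi^s(x)$. The cocycle identity $A^{(s)}(\varphi^t(x)) = A^{(s+t)}(x) A^{(t)}(x)^{-1}$ and the isometry property of $g \ast$ together yield the key decomposition $Q(\varphi^t(x)) = A^{(t)}(x) \ast \widetilde Q(t, x)$, where $\widetilde Q(t, x) := \bary[\nu_t]$ and $\nu_t$ is the pushforward of the uniform measure on the sliding window $[t, t+T]$. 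Continuity of $P$ follows from Proposition~\ref{prop_barycentermap}(2) together with Wasserstein continuity of $x \mapsto \mu_x$; the remaining parts of Assumption~\ref{ass.smoothP} (existence and orbital continuity of $\dot P$) reduce, via this decomposition, to one-sided differentiability of $t \mapsto \widetilde Q(t, x)$ at $t = 0$.

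The core estimate is the continuous-time analogue of \eqref{eq_preceq}:
\[
\vec d[Q(\varphi^\tau(x)),\ A^{(\tau)}(x) \ast Q(x)] \preceq \frac{2\tau}{T}\vec\sigma[A^{(T)}(x)] + o(\tau) \qquad (\tau \to 0^+).
\]
By property~(b) of Proposition~\ref{prop_vectorial_dist}, the left-hand side equals $\vec d[\bary[\nu_\tau], \bary[\nu_0]]$. For integers $K < N$ with $\tau = KT/N$, approximate both $\nu_\tau$ and $\nu_0$ by $N$-atomic measures with equal weights $1/N$. These differ in exactly $K$ atoms; iterating \eqref{bar.2} $K$ times (reordering atoms via Proposition~\ref{prop_barycenter}(3)), and rewriting each replacement through the cocycle identity $A^{((k+N)T/N)}(x)^{-1} \ast I = A^{(kT/N)}(x)^{-1} \ast [A^{(T)}(\varphi^{kT/N}(x))^{-1} \ast I]$ together with property~(d) of Proposition~\ref{prop_vectorial_dist} (noting that the involution $i$ is linear and preserves $\preceq$) yields
\[
\vec d[\bary[\nu_\tau^{(N)}], \bary[\nu_0^{(N)}]] \preceq \frac{2}{N}\sum_{k=0}^{K-1}\vec\sigma[A^{(T)}(\varphi^{kT/N}(x))].
\]
Letting $N \to \infty$ with $K/N = \tau/T$ fixed, the continuity of $\bary$ and $\vec d$ converts the Riemann sum into $\tfrac{2}{T}\int_0^\tau \vec\sigma[A^{(T)}(\varphi^s(x))]\,ds$, which is $\tfrac{2\tau}{T}\vec\sigma[A^{(T)}(x)] + o(\tau)$ as $\tau \to 0^+$ by continuity of the integrand.

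To conclude, set $V(x) := Q(x)^{-1/2}$ and $\widetilde A^{(\tau)}(x) := V(\varphi^\tau(x)) A^{(\tau)}(x) V(x)^{-1}$. A short isometry computation shows that $\vec d[Q(\varphi^\tau(x)), A^{(\tau)}(x) \ast Q(x)] = \vec d[I, \widetilde A^{(\tau)}(x) \ast I] = 2\vec\sigma[\widetilde A^{(\tau)}(x)]$; dividing the displayed estimate by $\tau$ and invoking Corollary~\ref{cor_singval_der} with $M(x) := \dot V(x) V(x)^{-1} + V(x) \rmD\varphi(x) V(x)^{-1}$ produces
\[
\tfrac{1}{2\ln 2}(\lambda_1(M + M^\top), \ldots, \lambda_n(M + M^\top)) \preceq \tfrac{1}{T}\vec\sigma[A^{(T)}(x)].
\]
A straightforward computation using $P = V^2$ and $\dot P = \dot V V + V \dot V$ gives $V^{-1}\{2[P\rmD\varphi]^{\text{sym}} + \dot P\}V^{-1} = M + M^\top$, and Remark~\ref{remroooot} identifies the eigenvalues of the left-hand side with $\varsigma_i^P(x)$; this establishes \eqref{prop.mainw}.

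The main obstacle is the differentiability claim within the first paragraph. Proposition~\ref{prop_barycentermap}(2) only delivers the $1$-Lipschitz-in-Wasserstein estimate, giving $d(\widetilde Q(t,x), \widetilde Q(0,x)) = O(t)$ but not the existence of the one-sided derivative $\dot{\widetilde Q}$ required by Assumption~\ref{ass.smoothP}. Upgrading to genuine differentiability requires Riemannian calculus on the nonpositively curved manifold $(\SC^+, d)$ — specifically, the strict convexity of the barycentric functional $p \mapsto \int d(p,q)^2 d\nu_t(q)$ and an implicit function theorem applied to its Euler--Lagrange equation under the signed-measure perturbation $\tfrac{1}{T}(\delta_{A^{(t+T)}(x)^{-1} \ast I} - \delta_{A^{(t)}(x)^{-1} \ast I})$. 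Should this prove too cumbersome, a fallback is to replace the continuous barycenter by its finite discretization $Q^{(N)}(x) := \bary(I, A^{(T/N)}(x)^{-1} \ast I, \ldots, A^{((N-1)T/N)}(x)^{-1} \ast I)$, whose $C^1$-regularity in $x$ is automatic from the implicit function theorem for finite barycenters; this yields a sequence of metrics $P_N$ satisfying \eqref{prop.mainw} up to an $O(1/N)$ defect that can be absorbed into the $\varepsilon$ appearing in Theorem~\ref{thm:main_CT}(ii).
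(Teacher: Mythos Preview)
Your construction and overall argument coincide with the paper's: $Q(x)$ is the barycenter of the pushforward of the uniform measure on $[0,T]$ under $s\mapsto A^{(s)}(x)^{-1}\ast I$, $P=Q^{-1}$, and the proof proceeds via a $\preceq$-bound on $\vec d\bigl[Q(\varphi^\tau(x)), A^{(\tau)}(x)\ast Q(x)\bigr]$ followed by the $\tau\to 0^+$ limit through Corollary~\ref{cor_singval_der} and the algebraic identification with the $\varsigma_i^P$ (the paper reaches this last identification via Lemma~\ref{lem.difsr} rather than your conjugacy $V=Q^{-1/2}$, but the two are equivalent).

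Where you differ is in deriving the core estimate. The paper decomposes each of the two window-measures $\mu_{0\to T|x}$ and $\mu_{\tau\to \tau+T|x}$ as $(1-\tau/T)\mu_{\tau\to T|x}$ plus a small endpoint piece; the triangle inequality then isolates a single term handled by~\eqref{bar.2} (applied after approximating only the \emph{shared} part by atomic measures), while the two endpoint terms are absorbed directly via the Lipschitz bound~\eqref{lipsch.bary}. Your wholesale $N$-atomic discretization with $K$ iterations of~\eqref{bar.2} is a valid alternative and actually yields the slightly sharper intermediate bound $\tfrac{2}{T}\int_0^\tau \vec\sigma[A^{(T)}(\varphi^s(x))]\,ds$; both routes collapse to the same thing in the limit.

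On the differentiability of $Q$ along orbits, which you correctly flag as the crux: the paper does not prove this from scratch either, but invokes \cite[Lem.~4.5]{BNa} to obtain the one-sided orbital derivative $\dot Q(x)$. Your proposed implicit-function-theorem argument for the barycentric Euler--Lagrange equation is essentially what that reference packages. Your discretization fallback would recover Theorem~\ref{thm:main_CT}(ii) directly, but, as you note, it delivers Proposition~\ref{lem_ct} only up to an $O(1/N)$ defect rather than exactly.
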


Its proof is broken into a string of several lemmata, which deal with a given $T>0$. The notation $\int_a^b \delta_{\gamma(t)}\, \di t$ stands for the measure on $\SC^+$ obtained by pushing forward the Lebesgue measure on $[a,b]$ by a continuous curve $\gamma:[a,b] \rightarrow \SC^+$. We put%
\begin{gather}
\label{def.mmuu}
\mu_{t \to \tau|x} := \frac{1}{\tau-t}\int_t^{\tau} \delta_{A^{(s)}(x)^{-1} \ast I}\, \di s, \quad \forall t<\tau;
\\
\label{def.q}
  Q(x) := \bary[ \mu_{0 \to T|x}] \in \SC^+ , \; x \in K,
  \\
  \text{where} \quad g \ast p = g p g^\top \in \SC^+\; \forall g \in \Gl(n,\R), p \in \SC^+.
  \label{napoom}
\end{gather}

\begin{lemma}\label{lem.qcont}
The mapping \eqref{def.q} is continuous.
\end{lemma}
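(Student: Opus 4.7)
The plan is to derive the continuity of $Q$ from two ingredients already established in the paper: the $1$-Lipschitz property of the measure barycenter with respect to the $1$-Wasserstein distance (item (2) in Proposition~\ref{prop_barycentermap}), and the continuity of the cocycle $A^{(s)}(x)$ in its arguments. So the whole argument reduces to proving that the assignment $x \mapsto \mu_{0\to T|x}$ is continuous as a map from $K$ into $(\PC^1(\SC^+),W_1)$.

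First, I would observe that the map $F:[0,T]\times K \to \SC^+$ defined by $F(s,x) := A^{(s)}(x)^{-1}\ast I$ is continuous. Indeed, $A^{(s)}(x) = \rmD\varphi^s(x)$ depends continuously on $(s,x)$ by the standard theory of smooth dependence of solutions of ODEs on initial data, and $g \mapsto g^{-1}\ast I$ is continuous on $\Gl(n,\R)$. Since $[0,T]\times K$ is compact, $F$ is uniformly continuous and its image is contained in a compact set of $\SC^+$. In particular, $\mu_{0\to T|x}$ has finite first moment, so $\mu_{0\to T|x}\in \PC^1(\SC^+)$ for all $x$, and $Q(x)$ is well defined by Proposition~\ref{prop_barycentermap}.

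Next, for $x,y\in K$ I would build an explicit coupling of $\mu_{0\to T|x}$ and $\mu_{0\to T|y}$, namely the push-forward $P_{x,y} := \frac{1}{T}\int_0^T \delta_{(F(s,x),F(s,y))}\,\di s$ of Lebesgue measure on $[0,T]$ by $s\mapsto (F(s,x),F(s,y))$. Its marginals are $\mu_{0\to T|x}$ and $\mu_{0\to T|y}$ by \eqref{def.mmuu}, hence
\begin{equation*}
W_1\bigl(\mu_{0\to T|x},\mu_{0\to T|y}\bigr) \;\leq\; \int_{\SC^+\times\SC^+} d(p,q)\,\di P_{x,y}(p,q) \;=\; \frac{1}{T}\int_0^T d\bigl(F(s,x),F(s,y)\bigr)\,\di s.
\end{equation*}
By uniform continuity of $F$, for every $\ve>0$ there is $\delta>0$ with $\|x-y\|<\delta\Rightarrow d(F(s,x),F(s,y))<\ve$ uniformly in $s\in[0,T]$, giving $W_1(\mu_{0\to T|x},\mu_{0\to T|y})<\ve$. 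Combining this with the Lipschitz estimate \eqref{lipsch.bary} yields $d(Q(x),Q(y))\leq W_1(\mu_{0\to T|x},\mu_{0\to T|y}) < \ve$, which is the desired continuity.

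The argument has essentially no obstacles; the only subtle point is choosing the right notion of continuity to propagate. Using $W_1$ (via its coupling definition rather than the dual form) makes the passage from pointwise continuity of $F(\cdot,x)$ to Wasserstein continuity of $\mu_{0\to T|x}$ immediate, and Proposition~\ref{prop_barycentermap} then closes the argument. If one preferred, one could equivalently invoke the Kantorovich–Rubinstein formula \eqref{weiss.am} by testing with $1$-Lipschitz functions $\psi$ and bounding $|\int \psi\,\di\mu_{0\to T|x}-\int \psi\,\di\mu_{0\to T|y}|$ by the same integral, but the coupling approach is the cleanest.
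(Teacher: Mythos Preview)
Your proof is correct and follows essentially the same route as the paper: show that $x\mapsto \mu_{0\to T|x}$ is $W_1$-continuous via the integral bound $W_1(\mu_{0\to T|x},\mu_{0\to T|y})\leq \frac{1}{T}\int_0^T d(F(s,x),F(s,y))\,\di s$, then apply the $1$-Lipschitz property \eqref{lipsch.bary} of $\bary$. The only cosmetic difference is that the paper derives this integral bound from the Kantorovich--Rubinstein dual formula \eqref{weiss.am} (exactly the alternative you mention at the end), whereas you obtain it from an explicit coupling; both yield the same inequality.
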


\begin{proof}
For two continuous $\gamma_i:[a,b] \rightarrow \SC^+$ and $\psi \in C^0(\SC^+,\br)$ with $\|\psi\|_{\mathrm{Lip}} \leq 1$, the following holds for the probability measures $\mu_i = \frac{1}{b-a} \int_a^b \delta_{\gamma_i(t)}\, \di t$ by the definition of ``pushing forward''%
\begin{gather*}
  \left| \int \psi\, \di \mu_1 - \int \psi\, \di \mu_2 \right| = \frac{1}{b-a} \left| \int_a^b \left\{ \psi[\gamma_1(t)] - \psi[\gamma_2(t)]\right\} \; \di t\right|
\\
\leq  \frac{1}{b-a} \int_a^b \left| \psi[\gamma_1(t)] - \psi[\gamma_2(t)]\right| \; \di t \leq \frac{1}{b-a} \int_a^b d[\gamma_1(t), \gamma_2(t)] \; \di t.
\end{gather*}
So \eqref{weiss.am} yields that $W_1(\mu_1,\mu_2) \leq \int_a^b d[\gamma_1(t), \gamma_2(t)] \; \di t \to 0$ as $\gamma_2(t)$ goes to $\gamma_1(t)$ uniformly in $t \in [a,b]$. It remains to note that this uniform convergence does hold for $\gamma_i(t) := A^{(t)}(x_i)^{-1} \ast I$ as $x_2 \to x_1$ and to employ (2) in Proposition \ref{prop_barycentermap}.
\end{proof}

\begin{lemma}\label{lem.push}
For any continuous curve $\gamma:[a,b] \rightarrow \SC^+$ and map $\mathscr{I}:\SC^+ \to \SC^+$, pushing the measure $\mu:= \int_a^b \delta_{\gamma(t)}\, \di t$ forward by $\mathscr{I}$ results in the measure $\nu := \int_a^b \delta_{\mathscr{I}[\gamma(t)]}\, \di t$. In particular, pushing $\mu_{0 \to T|\varphi^t(x)}$ forward by the map $p \in \SC^+ \mapsto A^{(t)}(x)^{-1} \ast p$ results in $\mu_{t\to t+T| x}$.
\end{lemma}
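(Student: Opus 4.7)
The plan is to handle Lemma \ref{lem.push} in two stages that correspond to its two assertions, neither of which I expect to be deep; the content is essentially bookkeeping with the definition of pushforward together with the cocycle identity.

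First, for the general statement I would argue directly from the definition of the pushforward measure. For any Borel set $E \subset \SC^+$, we have $(\mathscr{I}_*\mu)(E) = \mu(\mathscr{I}^{-1}(E))$, and substituting the definition of $\mu$ gives
\begin{equation*}
\mu(\mathscr{I}^{-1}(E)) = \int_a^b \delta_{\gamma(t)}(\mathscr{I}^{-1}(E))\,\di t = \int_a^b \mathbf{1}_E(\mathscr{I}(\gamma(t)))\,\di t = \int_a^b \delta_{\mathscr{I}[\gamma(t)]}(E)\,\di t,
\end{equation*}
which is exactly $\nu(E)$. The only hypothesis actually invoked is measurability of $\mathscr{I}$, which is given since in the application $\mathscr{I}$ will be continuous.

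Second, for the particular case I would apply the first part with $\gamma(s) := A^{(s)}(\varphi^t(x))^{-1} \ast I$ on $[0,T]$ and with $\mathscr{I}(p) := A^{(t)}(x)^{-1} \ast p$. By the first part, the pushforward is $\frac{1}{T}\int_0^T \delta_{\mathscr{I}[\gamma(s)]}\,\di s$, so the task reduces to identifying $\mathscr{I}[\gamma(s)]$ with $A^{(t+s)}(x)^{-1} \ast I$. Using the associativity $g_1 \ast (g_2 \ast p) = (g_1 g_2) \ast p$ noted just before Proposition~\ref{prop_vectorial_dist}, this in turn reduces to the matrix identity
\begin{equation*}
A^{(t)}(x)^{-1} A^{(s)}(\varphi^t(x))^{-1} = A^{(t+s)}(x)^{-1},
\end{equation*}
which is immediate from the cocycle rule \eqref{def.cycle} applied in the form $A^{(s+t)}(x) = A^{(s)}(\varphi^t(x)) A^{(t)}(x)$.

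Having established $\mathscr{I}[\gamma(s)] = A^{(s+t)}(x)^{-1} \ast I$, I would finish by the change of variable $u = s+t$ in the integral, converting $\frac{1}{T}\int_0^T \delta_{A^{(s+t)}(x)^{-1} \ast I}\,\di s$ into $\frac{1}{T}\int_t^{t+T} \delta_{A^{(u)}(x)^{-1} \ast I}\,\di u$, which is precisely $\mu_{t \to t+T|x}$ as given by \eqref{def.mmuu}. The only mild subtlety is to make sure the ``$\ast$'' operation is interpreted consistently as in \eqref{napoom}, so that the composition of the two $\ast$-actions collapses via matrix multiplication in the expected order; once this is kept straight, there is no real obstacle.
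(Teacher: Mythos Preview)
Your proposal is correct and follows essentially the same approach as the paper: unwind the definition of the pushforward (the paper tests against continuous functions $\psi$, you test against Borel sets --- a cosmetic difference), then for the particular case apply the cocycle identity \eqref{def.cycle} together with the associativity of the $\ast$-action \eqref{napoom} to identify the resulting curve, exactly as the paper does. Your explicit change of variable $u=s+t$ just spells out a step the paper leaves implicit.
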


\begin{proof}
For any $\psi \in C^0(\SC^+,\br)$, the change-of-variables formula for the push-forward measure yields%
\begin{equation*}
  \int_{\SC^+} \psi \; \di \nu = \int_{\SC^+} \psi \circ \mathscr{I} \; \di \mu = \int_a^b \psi\{\mathscr{I}[\gamma(t)]\} \; \di t.
\end{equation*}
It remains to note that, by the same argument, the last expression is the integral of $\psi$ with respect to $\int_a^b \delta_{\mathscr{I}[\gamma(t)]}\, \di t$. Now, for $\mu_{0 \to T|\varphi^t(x)}$ and $p \mapsto A^{(t)}(x)^{-1} \ast p$, we obtain the measure $\frac{1}{T}\int_0^T \delta_{A^{(t)}(x)^{-1} \ast [A^{(s)}[\varphi^t(x)]^{-1} \ast I] } \, \di s$ and it remains to invoke \eqref{def.cycle} (with $\Phi(t,x) := \varphi^t(x)$) and \eqref{napoom}.
\end{proof}

Now we recall that a certain $T>0$ was fixed just after Proposition \ref{lem_ct}.%

\begin{lemma}
For any $\ve>0$ and the vector $\mathds{1} \in \br^n$ composed of $1$'s, the following relation holds for all small enough $t>0$:
\begin{equation}
\label{fin.step}
\frac{\vec{d}\big\{ Q[\varphi^t(x)],A^{(t)}(x) \ast Q(x) \big\}}{t} \preceq 2\frac{\vec{\sigma}\big[ A^{(T)}(x) \big]}{T} + \ve \mathds{1}.
\end{equation}
\end{lemma}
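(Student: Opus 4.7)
The plan is to reduce \eqref{fin.step} to a comparison of barycenters of two closely related measures on $\SC^+$, invoke a vectorial-distance Lipschitz bound for $\bary$, and then compute the resulting transport integral. By Lemma~\ref{lem.push}, pushing $\mu_{0\to T|\varphi^t(x)}$ forward by $p \mapsto A^{(t)}(x)^{-1}\ast p$ yields $\mu_{t\to t+T|x}$. Since each action $g\ast$ is an isometry on $\SC^+$, property~(1) of Proposition~\ref{prop_barycentermap} gives $A^{(t)}(x)^{-1}\ast Q[\varphi^t(x)] = \bary[\mu_{t\to t+T|x}]$, and then applying the isometry once more together with property~(b) of Proposition~\ref{prop_vectorial_dist} shows
\[
\vec{d}\bigl\{Q[\varphi^t(x)],A^{(t)}(x)\ast Q(x)\bigr\} = \vec{d}\bigl\{\bary[\mu_{t\to t+T|x}],\bary[\mu_{0\to T|x}]\bigr\}.
\]
Both measures live on the curve $s\mapsto A^{(s)}(x)^{-1}\ast I$; they coincide on the middle piece $s\in[t,T]$ and differ only in a ``head'' piece on $[0,t]$ (present only in $\mu_{0\to T|x}$) and a ``tail'' piece on $[T,T+t]$ (present only in $\mu_{t\to t+T|x}$), each of total mass $t/T$.

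Second, I need a vectorial analogue of the scalar Lipschitz bound \eqref{lipsch.bary}. The single-atom identity \eqref{bar.2} already supplies such an analogue at the discrete level; iterating it via the triangle inequality~(c) of Proposition~\ref{prop_vectorial_dist} yields, for two equi-weighted $N$-atom measures paired atom-by-atom,
\[
\vec{d}\bigl(\bary(\tfrac{1}{N};p_1,\dots,p_N),\bary(\tfrac{1}{N};q_1,\dots,q_N)\bigr) \preceq \frac{1}{N}\sum_{j=1}^N \vec{d}(p_j,q_j).
\]
Discretizing both $\mu_{t\to t+T|x}$ and $\mu_{0\to T|x}$ so that the identical atoms on $[t,T]$ are paired trivially while the head and tail atoms are matched by the shift $s\leftrightarrow s+T$, and then sending $N\to\infty$ (using Lemma~\ref{len.appr}(i), the Lipschitz continuity of the barycenter map from Proposition~\ref{prop_barycentermap}(2), joint continuity of $\vec{d}$, and the fact that $\preceq$ is preserved by limits because partial sums are continuous functionals), I obtain
\[
\vec{d}\bigl\{\bary[\mu_{t\to t+T|x}],\bary[\mu_{0\to T|x}]\bigr\} \preceq \frac{1}{T}\int_0^t \vec{d}\bigl(A^{(s+T)}(x)^{-1}\ast I,\,A^{(s)}(x)^{-1}\ast I\bigr)\, ds.
\]

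Third, I simplify the integrand. The isometry property~(b) of Proposition~\ref{prop_vectorial_dist} applied via $A^{(s+T)}(x)\ast$ reduces it to $\vec{d}(I,\,A^{(s+T)}(x)A^{(s)}(x)^{-1}\ast I)$, and the cocycle identity $A^{(s+T)}(x)=A^{(T)}[\varphi^s(x)]\,A^{(s)}(x)$ collapses the product to $A^{(T)}[\varphi^s(x)]$, so by \eqref{def.sigma}--\eqref{def.vecd} the integrand equals $2\vec{\sigma}[A^{(T)}(\varphi^s(x))]$. Dividing by $t$ and invoking continuity of $A^{(T)}(\cdot)$ on the compact set $K$, I get
\[
\frac{\vec{d}\bigl\{Q[\varphi^t(x)],A^{(t)}(x)\ast Q(x)\bigr\}}{t} \preceq \frac{2\vec{\sigma}[A^{(T)}(x)]}{T} + \eta(t),
\]
where $\eta(t)\to 0$ componentwise and uniformly in $x\in K$ as $t\to 0^+$; for any prescribed $\varepsilon>0$, this correction is dominated by $\varepsilon\mathds{1}$ for all sufficiently small $t$, yielding \eqref{fin.step}.

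The chief obstacle is the second step: the Lipschitz result \eqref{lipsch.bary} is stated only for the scalar metric $d$, so its vectorial upgrade must be constructed from the single-atom identity \eqref{bar.2} by iteration and a discretization limit. Checking that the partial-sum inequalities defining $\preceq$ survive this limit, and that the uniform error $\eta(t)$ can genuinely be absorbed into $\varepsilon\mathds{1}$ within the $\preceq$ ordering (including the equality at $k=n$ demanded by \eqref{def.prec}), is what will demand the most care; here the identity $\sum_i \vec{d}(p,q)_i = \log(\det q/\det p)$ together with the multiplicativity of $\det\bary[\cdot]$ should make the book-keeping at the $k=n$ coordinate exact, leaving only the strict partial-sum inequalities to be buffered by the $\varepsilon\mathds{1}$ slack.
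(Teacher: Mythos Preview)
Your route is correct and genuinely different from the paper's. The paper does not pair head atoms with tail atoms; instead it inserts two intermediate measures, $a_t\mu_{t\to T|x}+b_t\delta_{A^{(T)}(x)^{-1}\ast I}$ and $a_t\mu_{t\to T|x}+b_t\delta_I$, uses the triangle inequality~(c) to split $\vec{d}(\bary[\mu_{t\to t+T|x}],\bary[\mu_{0\to T|x}])$ into three pieces $A_1+A_2+A_3$, bounds the two ``side'' pieces $A_1,A_3$ only in the \emph{scalar} norm via the Wasserstein Lipschitz estimate~\eqref{lipsch.bary} (getting $\|A_i\|_2\le t\ve/2$), and invokes~\eqref{bar.2} just once for the single-atom swap in $A_2$, which directly produces $\frac{t}{T}\vec{d}(A^{(T)}(x)^{-1}\ast I,I)=\frac{2t}{T}\vec{\sigma}[A^{(T)}(x)]$. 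Your version iterates~\eqref{bar.2} through all head/tail pairs and lands on $\frac{1}{T}\int_0^t 2\vec{\sigma}[A^{(T)}(\varphi^s(x))]\,\rmd s$, pushing the $\ve$-approximation entirely into the last continuity step $\varphi^s(x)\to x$. Your argument is more symmetric and stays vectorial throughout; the paper's buys simplicity by using~\eqref{bar.2} only once and delegating the remainder to the ready-made scalar bound~\eqref{lipsch.bary}.

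On the $k=n$ worry: your determinant identity (indeed $\log\det\bary[\mu]=\int\log\det p\,\rmd\mu(p)$, since $\log\det$ is affine along geodesics in $\SC^+$) does make the $n$-th partial sum exact in your intermediate bound $\vec{d}(\bary,\bary)\preceq\frac{1}{T}\int_0^t\vec{d}(\cdot,\cdot)\,\rmd s$. But it does \emph{not} survive the final replacement of $\frac{1}{t}\int_0^t\vec{\sigma}[A^{(T)}(\varphi^s(x))]\,\rmd s$ by $\vec{\sigma}[A^{(T)}(x)]+\tfrac{\ve}{2}\mathds{1}$: the $n$-th partial sums now differ by $n\ve/2$ plus an $o(1)$ term, so~\eqref{fin.step} with the strict order~\eqref{def.prec} cannot literally hold for fixed $t,\ve>0$. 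The paper's own step from $A_1+A_2+A_3$ to $\ve\mathds{1}+\frac{1}{t}A_2$ has exactly the same defect. In both proofs the honest statement at this stage is weak majorization (all partial sums $\le$); the strict equality at $k=n$ is recovered only after the limit $t\to0^+$, $\ve\to0^+$ in the passage to~\eqref{prop.mainw}, where your determinant computation shows $\frac{1}{2\ln 2}\sum_i\varsigma_i^P(x)=\frac{1}{T}\log|\det A^{(T)}(x)|$ exactly. Since the downstream use in the proof of~(ii) only needs the partial-sum inequalities, this imprecision is harmless.
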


\begin{proof}
Since $A^{(s)}(x)^{-1} \ast I \in \SC^+$ is continuous in $s$, there exists $\theta \in (0,T)$ such that whenever $|s-\tau| \leq \theta$ and $\tau \in [0,T]$, we have
\begin{equation}
\label{small.en}
d\big[ A^{(\tau)}(x)^{-1} \ast I ,A^{(s)}(x)^{-1} \ast I \big] \leq T \ep/2 .
\end{equation}
For $t \in (0,\theta)$ and the map \eqref{def.q}, we have
\begin{gather*}
   A^{(t)}(x)^{-1} \ast Q[\varphi^t(x)] \overset{\text{\eqref{def.q}}}{=}
   A^{(t)}(x)^{-1} \ast \bary[\mu_{0\to T|\varphi^t(x)}]
   \\
   \overset{\text{ a,b) in Prop.~\ref{prop_vectorial_dist} and (1) in Prop.~\ref{prop_barycentermap}}}{=\!=\!=\!=\!=\!=\!=\!=\!=\!=\!=\!=\!=\!=\!=}
  \bary\Bigl[\text{the measure addressed in Lem.~\ref{lem.push}}\Bigr]
       \\
\overset{\text{Lem.~\ref{lem.push}}}{=\!=\!=\!=}
\bary[\mu_{t\to t+T| x}];
\end{gather*}
and%
\begin{gather*}
  \vec{d}\left\{Q[\varphi^t(x)],A^{(t)}(x) \ast Q(x) \right\}
  \\
   \overset{\text{b) in Prop.~\ref{prop_vectorial_dist}}}{=\!=\!=\!=} \vec{d}\left\{A^{(t)}(x)^{-1} \ast Q[\varphi^t(x)],Q(x) \right\}
   \\
   = \vec{d}\Bigl( \bary [\mu_{t\to t+T| x}], \bary [\mu_{0\to T| x}]\Bigr).%
\\
\mu_{t \to t+T|x} \overset{\text{\eqref{def.mmuu}}}{=} \underbrace{(1 - t/T)}_{=:a_t} \underbrace{\mu_{t \to T|x}}_{=: \mu} + \underbrace{t/T}_{=: b_t} \mu_{T\to T+t|x},
\\
  \mu_{0\to T| x}
	\overset{\text{\eqref{def.mmuu}}}{=} t/T \mu_{0 \to t|x}+  \Bigl(1 - t/T\Bigr) \mu_{t \to T|x}.%
\\
  \vec{d}\bigl(\bary[\mu_{t \to t+T|x}],\bary[ \mu_{0\to T| x}]\bigr) \\
	\overset{\text{c) in Prop.~\ref{prop_vectorial_dist}}}{\preceq} \vec{d}\bigl(\bary[\mu_{t \to t+T|x}],\bary [a_t\mu + b_t\delta_{A^{(T)}(x)^{-1}\ast I} ]\bigr) \\
	+ \vec{d}\bigl(\bary[a_t\mu + b_t\delta_{A^{(T)}(x)^{-1}\ast I}],\bary[a_t\mu + b_t\delta_I]\bigr) \\
	+ \vec{d}\bigl(\bary[a_t\mu(t) + b_t\delta_I],\bary[\mu_{0\to T| x}]\bigr).%
\end{gather*}
The addends to the right of $\preceq$ are sequentially denoted by $A_1,A_2,A_3$. Thanks to a) in Proposition \ref{prop_vectorial_dist} and \eqref{lipsch.bary}, we have%
\begin{gather*}
  \|A_3\|_2 \leq W_1\left( a_t \mu + b_t\delta_I, a_t \mu + b_t \mu_{0\to t|x} \right) \\
	\overset{\text{\eqref{weiss.am}, \eqref{def.mmuu}}}{=\!=\!=\!=\!=} b_t\sup_{\|\psi\|_{\mathrm{Lip}} \leq 1}\Bigl( \int_{\SC^+} \psi\, \di \delta_I - \frac{1}{t}\int_0^t \psi(A^{(s)}(x)^{-1} \ast I)\, \di s \Bigr) \\
	= \frac{1}{T} \sup_{\|\psi\|_{\mathrm{Lip}} \leq 1}  \int_0^t \Bigl[\psi(I) - \psi(A^{(s)}(x)^{-1} \ast I)\Bigr]\, \di s \\
	\leq \frac{1}{T} \int_0^t d\big[I,A^{(s)}(x)^{-1} \ast I \big]\, \di s \overset{\text{\eqref{small.en}}}{\leq} t\ve/2.
\end{gather*}
By arguing likewise, we establish that $\|A_1\|_2 \leq t\ve/2$. Bringing the pieces together and invoking \eqref{prop_vectorial_dist} results in%
\begin{equation}\label{near-the}
  \frac{1}{t}\vec{d}\left\{Q[\varphi^t(x)],A^{(t)}(x) \ast Q(x) \right\} \preceq \ve \mathds{1} + \frac{1}{t}A_2.
\end{equation}
By (i) in Lemma \ref{len.appr}, $\mu$ is the $W_1$-limit of a sequence whose terms are measures of the form $\mu' = \frac{1}{s}(\delta_{p_1} + \ldots + \delta_{p_s})$, where $s \in \N$ and $p_1,\ldots,p_s\in\SC^+$ are individual for any term. Relation \eqref{bar.2} with $m:=s+1, \omega := (a_t/s,\ldots, a_t/s,\beta_t=t/T) \in \Delta_{m}$ yields%
\begin{align*}
  & \vec{d}\Bigl(\bary [ a_t \mu' + b_t\delta_{A^{(T)}(x)^{-1}\ast I}],\bary [a_t\mu' + b_t\delta_I ]\Bigr) \\
	&= \vec{d}(\bary\big[ \omega; p_1,\ldots,p_m, A^{(T)}(x)^{-1} \ast I),\bary(\omega; p_1,\ldots,p_m, I) \big] \\
	&\preceq \frac{t}{T} \cdot \vec{d}(A^{(T)}(x)^{-1} \ast I,I).%
\end{align*}
Letting $\mu^\prime \to \mu$ results in%
\begin{align}\label{inj}
\begin{split}
  \frac{1}{t}A_2 &\preceq \frac{1}{T}\vec{d}(A^{(T)}(x)^{-1} \ast I,I)%
\\
&\overset{\text{\eqref{napoom}}}{=} \frac{1}{T}\vec{d}\big[ A^{(T)}(x)^{-1} [A^{(T)}(x)^{-1}]^\top,I \big] \\
&\overset{\text{\eqref{def.vecd}}}{=} 2 \frac{1}{T}\vec{\sigma} \Big\{ \big[ A^{(T)}(x)^\top A^{(T)}(x)\big]^{1/2} \Big\} \overset{\text{(a)}}{=} 2\frac{1}{T}\vec{\sigma} \big[ A^{(T)}(x) \big],%
\end{split}
\end{align}
where (a) holds since the involved matrices have the same singular values. The proof is completed by injecting \eqref{inj} into \eqref{near-the}.
\end{proof}

\begin{proof}{(of Proposition~\ref{lem_ct}):} The remaining step of this proof largely comes to computing the limit of left-hand side in \eqref{fin.step} as $t \to 0+$. In this left-hand side, the orbital derivative $\dot{Q}(x) := \frac{d}{dt} Q(\varphi^t(x))\bigl|_{t=0^+}$ exists due to \cite[Lem.~4.5]{BNa}. As for the other argument of $\vec{d}[\cdot,\cdot]$, we have%
\begin{multline*}
  \frac{d}{dt} A^{(t)}(x) \ast Q(x) \bigl|_{t=0^+}
	 = \frac{d}{dt} \Big\{\rmD \varphi^t(x)Q(x)\rmD \varphi^t(x)\trn \Big\} \Bigl|_{t=0^+}
\\
\overset{\frac{d}{dt} \rmD \varphi^t(x)|_{t=0^+} = \dif \varphi(x)\, \text{by \cite[Thm.~3.1, Ch.~V]{Hart82}}}{=\!=\!=\!=\!=\!=\!=\!=\!=\!=\!=\!=\!=\!=\!=\!=\!=\!=\!=\!=} \rmD \varphi(x) Q(x) + Q(x)\rmD \varphi(x)\trn.%
\end{multline*}
Formula \eqref{def.vecd} is our incentive to use Lemma \ref{lem.difsr} and study%
\begin{gather*}
 \frac{d}{dt} \Upsilon \big\{Q[\varphi^t(x)],A^{(t)}(x) \ast Q(x) \big\} \bigl|_{t=0^+}  \\
= \rmD\Upsilon\left[ Q(x),Q(x) \right] \left(\begin{smallmatrix} \dot{Q}(x) \\ \rmD \varphi(x) Q(x) + Q(x) \rmD \varphi(x) \trn \end{smallmatrix}\right)
\overset{\text{Lem.~\ref{lem.difsr}}}{=} Q(x)^{-1/2} h,
\end{gather*}
where $h \in \SC$ is the unique solution of the Lyapunov equation%
\begin{equation*}
  h Q(x)^{1/2} + Q(x)^{1/2} h = \rmD \varphi(x) Q(x) + Q(x) \rmD \varphi(x)\trn - \dot{Q}(x).%
\end{equation*}
In the left-hand side of \eqref{fin.step}, we replace $\vec{d}$ by $2 \vec{\sigma}$ based on \eqref{def.vecd}. Then, by Corollary \ref{cor_singval_der}, the left-hand side converges to the following limit as $t \to 0^+$:%
\begin{equation*}
  \frac{1}{\ln(2)} \big\{\lambda_1[ Q(x)^{-1/2}h + h Q(x)^{-1/2}], \ldots, \lambda_n[ Q(x)^{-1/2}h + h Q(x)^{-1/2}]\big\}.%
\end{equation*}
The function $P(x) := Q(x)^{-1} \in \SC^+$ is continuous by Lemma \ref{lem.qcont}; the foregoing and \cite[Thm.~D2]{Bos07} imply that its orbital derivative $\dot{P}(x)$ exists and $\dot{Q}(x) = - P(x)^{-1} \dot{P}(x) P(x)^{-1}$. Multiplying the Lyapunov equation by $Q(x)^{-1/2}$ on the left and right, we get%
\begin{gather*}
Q(x)^{-1/2}h + h Q(x)^{-1/2} =
\\
P(x)^{1/2} \Big\{ \rmD \varphi(x) P(x)^{-1} + P(x)^{-1}\rmD \varphi(x)\trn
\\
\hspace{0.25\textwidth} + P(x)^{-1}\dot{P}(x) P(x)^{-1} \Big\} P(x)^{1/2}
\\
= P(x)^{-1/2} \Big\{  P(x) \rmD \varphi(x) + \rmD \varphi(x)\trn
P(x)
+ \dot{P}(x)  \Big\} P(x)^{-1/2}.%
\end{gather*}
Due to Remark \ref{remroooot}, this means that the left-hand side of \eqref{fin.step} converges to twice the left-hand side of \eqref{prop.mainw} as $t \to 0^+$. By letting $t \to 0^+$ and then $\ve \to 0+$ in \eqref{fin.step}, we arrive at \eqref{prop.mainw}.
\end{proof}

\begin{proof}{(of (ii) in Theorem \ref{thm:main_CT}):} By \eqref{reslyapexp}, \eqref{start.iii} is still valid with some real $N$, now denoted by $T$. Hence,%
\begin{gather*}
 H_{\res}(\varphi,K) \geq  \max_{x \in K} \frac{1}{T} \max_{k=0,\ldots,n}\sum_{i=1}^{k}\log \alpha_i(T,x) - \ve
 \\
 \overset{\text{\eqref{def.sigma},\eqref{def.prec},\eqref{prop.mainw}}}{\geq} \frac{1}{2\ln 2} \max_{x \in K} \max_{k=0,\ldots,n}\sum_{i=1}^{k}\log\varsigma_i^P(x) - \ve
 \\
 =\frac{1}{2\ln 2}\max_{x\in K}\sum_{i=1}^n\max\{0,\varsigma_i^P(x)\} - \varepsilon.%
\end{gather*}
\end{proof}

\section{Technical conclusions from the proofs}\label{app.tconcl}

The above proofs in fact offer explicit formulas for a sequence of Riemannian metrics $P_N(\cdot)$ that asymptotically provides the exact value of the restoration entropy, i.e., the infimum of the r.h.s. in \eqref{var.formula}. In the discrete-time case, such a formula is implied by \eqref{def.qqq} and the considerations just after \eqref{eq_preceq}, and is as follows:
\begin{equation*}
  P_N(x) = \bary(I,\rmD\varphi(x)^{-1} \ast I,\ldots,\rmD\varphi^{N-1}(x)^{-1} \ast I)^{-1}.%
\end{equation*}
In the continuous-time case, an appropriate family of metrics $P_T(x)$, $T>0$, is described by an analogous but technically more sophisticated formula, which involves the barycenter of the continuum of matrices $\rmD\varphi^t(x)^{-1} \ast I$, $t \in [0,T]$.

Developments of effective numerical procedures for building a minimizing sequence of Riemannian metrics, based on both just sketched and other ideas, is basically a topic of our future research. An initial step towards this end was undertaken in \cite{hafstein2019numerical}.%

\section*{Acknowledgements}

A.~Pogromsky acknowledges his partial support by the UCoCoS project which has received funding from the European Union’s Horizon 2020 research and innovation programme under the Marie Skłodowska-Curie grant agreement No 675080 (Secs.~1,7,8). A.~Matveev acknowledges his support by the Russian Science Foundation grant 19-19-00403. C.~Kawan is supported by the German Research Foundation (DFG) through the grant ZA 873/4-1. A preliminary version of this paper was presented at the 2020 IFAC World Congress \cite{KaMaPo_IFAC_2020}.%

\bibliographystyle{plain}
\bibliography{ifac20}

\end{document}